\documentclass[11pt, hidelinks]{article}
\usepackage[margin =1in]{geometry}
\usepackage{amssymb,amsthm,amsmath}
\usepackage{enumerate}
\usepackage{hyperref}
\usepackage{cite}
\usepackage[capitalize]{cleveref}
\usepackage[shortlabels]{enumitem}
\usepackage{color}
\usepackage{cancel}
 \usepackage{tikz}
\usepackage{tikz-3dplot}
\usepackage{pgf}
\usepackage{svg}
\usepackage{pgfplots}
\usepackage{import}
\usepackage{svg}
\usepackage{comment}
\usepackage[utf8]{inputenc} 
\usepackage[T1]{fontenc}    
\usepackage{hyperref}       
\usepackage{url}            
\usepackage{booktabs}       
\usepackage{amsfonts}       
\usepackage{nicefrac}       
\usepackage{microtype}
\usepackage{multirow}
\usepackage{xfrac}
\usepackage{todonotes}
\usepackage{titlesec}
\usepackage{caption}
\usepackage{subcaption}

\usepackage{algorithm}
\usepackage{algpseudocode}
\usepackage{array}

\algtext*{EndFor}
\algtext*{EndIf}
\algtext*{EndWhile}
\algtext*{EndProcedure}
\algrenewtext{For}[1]{\textbf{for} #1}
\algrenewtext{While}[1]{\textbf{while} #1}
\algrenewtext{If}[1]{\textbf{if} #1}

\algnewcommand{\Input}{\item[\textbf{Input:}]}
\algnewcommand{\Initialize}{\item[\textbf{Initialize:}]}

\usepackage{varwidth}

\titleformat{\subsubsection}[runin]
{\normalfont\normalsize\bfseries}{\thesubsubsection}{1em}{}

\usepackage{graphicx}

\usepackage{appendix}
\numberwithin{equation}{section}

\newcommand{\spann}[1]{\mathrm{span}\left\{{#1}\right\}}

\newcommand{\dom}[1]{\mathrm{dom }(#1)}

\newcommand{\argmin}{\operatornamewithlimits{argmin}}

\newcommand{\Rext}{(-\infty, \infty]}

\theoremstyle{thmstyletwo}%
\newtheorem{thm}{Theorem}[section]
\newtheorem{theorem}[thm]{Theorem}
\newtheorem{definition}[thm]{Definition}
\newtheorem{proposition}[thm]{Proposition}

\newtheorem{lemma}[thm]{Lemma}

\newtheorem{corollary}[thm]{Corollary}

{\theoremstyle{plain}
}

\newtheorem{conjecture}[thm]{Conjecture}

\numberwithin{equation}{section}
\crefname{claim}{claim}{claims}
\Crefname{claim}{Claim}{Claims}
\crefname{lem}{lemma}{lemmas}
\Crefname{lem}{Lemma}{Lemmas}
\crefname{algorithm}{algorithm}{algorithms}
\Crefname{algorithm}{Algorithm}{Algorithms}

\newcommand{\eps}{\varepsilon}

\newcommand{\conv}[1]{\textrm{conv}\left\{{#1}\right\}}

\newcommand{\R}{\mathbb{R}}
\newcommand{\N}{\mathbb{N}}

\newcommand{\Ical}{\mathcal{I}}
\newcommand{\Holder}{H\"older }
\newcommand{\inner}[2]{\left\langle {#1}, {#2} \right\rangle}
\newcommand{\Lleft}{L^{\leftarrow}}

\usepackage{mathtools}

\usepackage[T1]{fontenc}
\usepackage{lmodern}
\newcommand{\pAlg}{p^\mathtt{alg}}
\newcommand{\dAlg}{d^\mathtt{alg}}
\newcommand{\pPEP}[1]{p_{#1}}
\newcommand{\dPEP}[1]{d_{#1}}
\newcommand{\Algclass}[1]{\mathtt{A}_{\mathrm{{#1}}}}

\newlength{\solutionindent}
\newlength{\questionboxrule}
\begin{document}
    \title{Inexactly Smooth Performance Estimation\\ and New Optimized Gradient Methods}

	 \author{Aaron Zoll\footnote{Johns Hopkins University, Department of Applied Mathematics and Statistics, \url{azoll1@jhu.edu}} \qquad Benjamin Grimmer\footnote{Johns Hopkins University, Department of Applied Mathematics and Statistics, \url{grimmer@jhu.edu}}}

	\date{}
	\maketitle

	\begin{abstract}
        We consider a general class of ``inexactly smooth'' convex functions, providing a universal model capturing as special cases $L$-smooth, $M$-Lipschitz,  and H\"older smooth functions, and any combination thereof. Such functions possess a calculus closely following that of smooth functions. Our main results provide inexactly smooth functions with interpolation theorems that are necessary and sufficient up to modest universal constants. These enable analysis of first-order methods for any inexactly smooth convex problem class via solving convex Performance Estimation Problems (PEPs). Further, these enable the extension of Drori and Taylor~\cite{constructive_approach}'s constructive approach to algorithm design. From this, we derive an exactly minimax optimal method for $(\beta,0)$-H\"older smooth problems, methods with the best-known convergence guarantees up to constants for any $(\beta,p)$-H\"older smooth convex minimization, and a new universal fast backtracking method for any inexactly smooth convex problem.
	\end{abstract}

\section{Introduction}
First-order methods have found success in a wide range of convex optimization tasks, ranging from smooth minimization to nonsmooth minimization. First-order methods are particularly effective at modern, large-scale unconstrained optimization problems of the form
$$ f_\star = \min_{x\in\R^d} f(x) $$
where computations of (sub)gradients remain tractable while other oracle models become expensive. Since the 1970s, algorithms tailored to the structure of the given problem class (smooth, nonsmooth, etc.) have been developed with highly optimized performance, often being minimax optimal. As classic examples, see~\cite{nesterov2004introductory, nemirovskilowerbounds}.

Alas, these optimized methods are typically only guaranteed to be performant on their problem class, arising from tailored analysis techniques. Here, our goal is to develop analytic tools, in particular ``Performance Estimation Problems,'' uniformly applicable to algorithm design over a wide range of problem classes.

To this end, we are motivated by the universal methods and analysis of~\cite{devolder_inexactly_smooth} and~\cite{Nesterov_2014}. Therein, they design first-order methods given access to a first-order oracle returning $(f(x),g) $ with $g \in\partial f(x)$ for any query $x$. Here $\partial f(x)=\{g : f(y) \geq f(x)+\langle g, y-x\rangle\ \forall y\}$ denotes the convex subdifferential. In particular, they considered any convex problem with \Holder continuous subgradients, referred to here as $(\beta,p)$-\Holder smoothness,
\begin{equation}\label{eq:Holder-def}
    \|g_x - g_y\|\leq \beta\|x-y\|^p \qquad \forall x,y\in\R^d,\ g_x\in\partial f(x),\ g_y\in\partial f(y)
\end{equation}
with $\beta>0$ and $p\in [0,1]$.
Nesterov's Universal Fast Gradient Method~\cite{Nesterov_2014} ensures
\begin{equation}\label{eq:Optimal-Holder-rate}
    f(x_N) - f_\star \leq \frac{2^{1+2p}\beta \|x_0-x_\star\|^{1+p}}{N^{\frac{1+3p}{2}}}
\end{equation}
universally, which is within a constant factor of the minimax optimal rate~\cite{nemierovski_optimal_HS} for each $p\in[0,1]$. When $p=1$, \Holder smoothness corresponds to the classical smoothness assumption of $\beta=L$-Lipschitz gradient. When $p=0$, this class corresponds to functions with subgradients differing by at most $\beta$. This is closely related, but distinct, from the classical nonsmooth assumption of having $M = \beta$-Lipschitz function values. Considering $p\in (0,1)$ interpolates between these settings.

The key analysis tool leveraged by~\cite{devolder_inexactly_smooth} and~\cite{Nesterov_2014} is that \Holder smoothness~\eqref{eq:Holder-def} implies the inexact quadratic upper bound for 
all $x,y \in \R^d,\ g_x\in\partial f(x)$ and any $\delta \geq 0$
\begin{equation}\label{eq:Holder-consequence}
    f(y) \leq f(x) + \langle g_x, y-x\rangle + \frac{\left(\frac{1-p}{1+p}\cdot\frac{1}{2\delta}\right)^\frac{1-p}{1+p}\beta^{\frac{2}{1+p}}}{2}\|y-x\|^2 + \delta.
\end{equation}
This result allows one to approach any \Holder smooth setting with similar methods and proof techniques to the smooth setting ($p=1$) where this bound holds without the additive error $\delta$. Such inexact bounds provide a means to universal designs.

Generalizing this key analysis tool, here we propose a family of ``inexactly smooth'' functions. We say a convex function is $L(\cdot)$-inexactly smooth for some function $L\colon [0,\infty) \rightarrow (0, \infty]$, if for any $x, y \in \R^d$,  $g_x\in\partial f(x)$, and $\delta\geq 0$,
\begin{equation}\label{def: inexactly smooth}
    f(y) \leq f(x) + \inner{g_x}{y-x} + \frac{L(\delta)}{2}\|y-x\|^2 + \delta.
\end{equation}
If $L(\delta) = \left(\frac{1-p}{1+p}\cdot\frac{1}{2\delta}\right)^\frac{1-p}{1+p}\beta^{\frac{2}{1+p}}$, this family contains all $(\beta,p)$-\Holder smooth functions. This can additionally model functions generated by sums of smooth and nonsmooth components, which have also received notable theoretical interest recently~\cite{grimmer2023heterogeneous, wu2026universalparameterfreegradientsliding, guigues2025universalsubgradientproximalbundle, Jelena_fine_grained}. Given a sum $f=f_0+f_1$, with $f_0$ having subgradients differ by at most $\beta_0$ and $f_1$ having $\beta_1$-Lipschitz gradient, inexact smoothness holds with $L(\delta)=\beta_1+\beta_0^2/(2\delta)$. Section~\ref{sec:Interpolation} discusses more properties and examples within this model.

In this paper, we develop a principled approach to the design and analysis of algorithms for inexactly smooth convex minimization. The foundational works of~\cite{drori2014performance} and~\cite{taylor2017smooth,taylor2017composite} introduced Performance Estimation Problems (PEPs) as a principled way to design and analyze first-order methods over traditional problem classes. See PEPit~\cite{goujaud2024pepit} and the many examples therein. PEPs are mathematical programs (often semidefinite programs) that compute a worst-case problem instance from a given class (e.g., $L$-smooth convex problems) for a given algorithm. Dually, PEPs provide a best possible convergence proof for a given algorithm using a given set of inequalities. Agreement of these primal and dual perspectives relies on related ``Interpolation Theorems.'' 
Kamri~\cite[Chapter 4]{kamri2025} provided a numerical study of some PEPs for \Holder smooth problems, without the analytical support of such theorems.

\paragraph{Our Contributions.}
We show that interpolation, PEP, and the constructive approach to algorithm design of~\cite{constructive_approach} all extend to inexactly smooth functions. Namely,
\begin{itemize}
    \item \textbf{Calculus of Inexactly Smooth Functions} In Section~\ref{sec:inexactly smooth functions}, we derive characterizations of inexactly smooth functions and their conjugates, as well as associated calculus rules.
    \item  \textbf{Interpolation Theory} Section~\ref{sec:Interpolation} begins by presenting necessary and sufficient conditions for observations to be interpolable by an inexactly smooth function. Theorem~\ref{thm:interpolation-general} gives our most general interpolation theory, which is tight up to a small universal constant.  Theorem~\ref{thm:interpolation-p0} establishes that these conditions are exact for $(\beta,0)$-\Holder smooth functions. 
    \item  \textbf{Inexactly Smooth Performance Estimation} We conclude Section~\ref{sec:Interpolation} by using our interpolation theory to define a convex formulation for analyzing the worst-case performance of an algorithm. Note, unlike most existing PEP work, these convex programs may not be semidefinite programs. Regardless, Theorem~\ref{thm:strong duality} establishes a strong duality result.
   \item \textbf{Algorithm Design} Section~\ref{sec:algo design} leverages our interpolation theory to construct optimized algorithms. \Cref{alg:SSEP_BSD} is minimax optimal among $(\beta, 0)$-\Holder smooth convex functions, Algorithm~\ref{alg:IS-OGM} is an asymptotically optimized gradient method for general \Holder smooth convex minimization, and Algorithm~\ref{alg:UOBL} is a universal method for any inexactly smooth convex problem requiring only input of a target accuracy $\eps$ and an iteration budget $N$. The associated convergence guarantees are given in Theorems~\ref{thm:minimax_BSFOM}, \ref{thm:UOGM-k/delta-q}, and \ref{thm:UOBL convergence}, respectively. 
\end{itemize}
\noindent Note the inexactness considered here~\eqref{def: inexactly smooth} is inexactness in attainment of smoothness-type quadratic upper bounds. This is distinct from inexactness due to gradient calculations, which have separately been studied in the PEP literature~\cite{liu2025nonasymptoticanalysisacceleratedmethods, deklerk2017exactlinesearch, deklerk2020inexact, gannot2022frequency}.

 \section{Preliminaries and Inexactly Smooth Function Calculus} \label{sec:inexactly smooth functions}

We begin with preliminaries and then discuss \Holder smooth functions as our motivating instance of inexactly smooth functions in Section~\ref{subsec:prelim}. Then Section~\ref{subsec:calculus} develops a general calculus for $L(\cdot)$-inexactly smooth functions which may be of independent interest.

Throughout, we consider closed, convex, proper functions $f: \R^d \to \Rext$. Let $\inner{\cdot}{\cdot}$ denote the standard Euclidean inner product with the associated two-norm $\|\cdot\|$. We denote the domain of $f$ by $\dom{f}$ and its subdifferential as previously introduced by $\partial f(x)$. We denote $\mathrm{epi}(f) := \{(x,t) \in \R^d \times \R: f(x) \leq t\}$ as the epigraph of $f$, for which the function $f$ is convex if and only if the set $\mathrm{epi}(f)$ is convex. Similarly, we denote $\mathrm{hypo}(f) :=  \{(x,t) \in \R^d \times \R: f(x) \geq t\}$ as the hypograph. The Fenchel conjugate of $f$ is $f^*(g)=\sup_{x \in \R^d} \{\inner{g}{x}-f(x)\}$, which satisfies the Fenchel-Young inequality, which holds with equality exactly when $g\in\partial f(x)$,
\begin{equation} \label{eq:Fenchel-Young}
    f(x) + f^*(g) \geq \langle g,x\rangle \qquad \forall x,g\in\mathbb{R}^d.
\end{equation}

\subsection{\Holder Smooth Functions} \label{subsec:prelim}
A core motivation for our work on general inexactly smooth functions is the special case of $(\beta,p)$-\Holder smooth functions (i.e., having \Holder continuous (sub)gradient as defined in~\eqref{eq:Holder-def}).
The extreme cases of $p=1$ and $p=0$ are particularly fundamental. When $p=1$, this is exactly $f$ having $\beta$-Lipschitz gradient and corresponds to $L(\delta)=\beta$-inexact smoothness. When $p=0$, this bounds the difference between two subgradients by $\beta$ and corresponds to $L(\delta)=\frac{\beta^2}{2\delta}$.

\Holder smoothness for any $p\in [0,1]$ was related to $L(\delta)$-inexact smoothness by~\cite{devolder_inexactly_smooth}. Below we formalize this, where the coefficient $\left(\frac{p+1}{2p}\right)^p$ is defined at $p=0$ by its limiting value of $1$ as $p\rightarrow 0$. By convention, we take $1/\infty = 0$ and $1/0 = \infty$.

\begin{proposition}\label{prop:Equiv Char of HS functions}
    Let $f : \R^d \to \Rext$ be closed convex and proper, and $L(\delta) = \left(\frac{1-p}{1+p}\frac{1}{2\delta}\right)^\frac{1-p}{1+p} \beta^{\frac{2}{1+p}}$. Then the conditions
    \begin{enumerate}
        \item $\|g_x-g_y\| \leq \beta \|x-y\|^p$, $\quad \forall$ $x, y \in \dom{f}$ and $g_x \in \partial f(x),  g_y \in \partial f(y) $,
      \item $f(y) \leq f(x) + \inner{g_x}{y-x}+\frac{L(\delta)}{2}\|x-y\|^{2}+\delta$, $\quad \forall$ $x, y \in \dom{f}$, $g_x \in \partial f(x)$, and $\delta \geq 0$,
        \item $f(y) \geq f(x) + \inner{g_x}{y-x}+\frac{1}{2L(\delta)}\|g_x-g_y\|^{2} - \delta$, $\quad \forall$ $x, y \in \dom{f}$, $g_x \in \partial f(x),  g_y \in \partial f(y) $, and $\delta \geq 0$,
        \item $\|g_x-g_y\| \leq \beta \left(\frac{p+1}{2p}\right)^p\|x-y\|^p$, $\quad \forall$ $x, y \in \dom{f}$ and $g_x \in \partial f(x),  g_y \in \partial f(y) .$
    \end{enumerate}
    are related by the implications $1. \Longrightarrow 2. \Longleftrightarrow 3. \Longrightarrow 4.$
\end{proposition}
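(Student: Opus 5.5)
We prove the three implications separately, each by reducing to a one-dimensional optimization over $\delta$ or $t=\|x-y\|$.

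\textbf{1 $\Rightarrow$ 2.} Fix $x,y$ and $g_x\in\partial f(x)$. Since $f$ is convex and finite on the segment $[x,y]\subseteq\dom{f}$, its restriction to the segment is absolutely continuous. Hence
\[
f(y)-f(x)-\inner{g_x}{y-x}=\int_0^1 \inner{g_{x+\tau(y-x)}-g_x}{y-x}\,d\tau \le \frac{\beta}{1+p}\|y-x\|^{1+p},
\]
where we used condition 1 inside the integral. It then remains to verify the scalar inequality
\[
\frac{\beta}{1+p}t^{1+p}\le \frac{L(\delta)}{2}t^2+\delta \qquad \text{for all } t,\delta\ge 0 .
\]
We will show this by minimizing the right-hand side over $\delta$ (or equivalently applying weighted AM--GM/Young with exponents $\frac{2}{1-p}$ and $\frac{2}{1+p}$). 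The constant in $L(\delta)$ is exactly the one making $\min_\delta\{\frac{L(\delta)}2 t^2+\delta\}=\frac{\beta}{1+p}t^{1+p}$. Checking this equality is the main computation; the cases $p=0$ and $p=1$ follow by the stated conventions.

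\textbf{2 $\Leftrightarrow$ 3.} We use the standard smooth-case argument at each fixed $\delta$. For $2\Rightarrow3$, fix $\delta$ and let $\phi(z)=f(z)-\inner{g_x}{z}$. Then $\phi$ is convex, minimized at $x$, and satisfies condition 2 with the same $L(\delta),\delta$ and subgradient $g_y-g_x$ at $y$. Applying the upper bound at base point $y$ gives
\[
\phi(x)\le \phi\big(y-(g_y-g_x)/L(\delta)\big)\le \phi(y)-\frac{1}{2L(\delta)}\|g_y-g_x\|^2+\delta ,
\]
which is condition 3 after rearranging. The difficulty is that the point $y-(g_y-g_x)/L$ may leave $\dom{f}$. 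We handle this by observing that condition 2, applied from any point with a subgradient, forces $f$ to be finite everywhere, since the quadratic bound is finite. So $\dom{f}=\R^d$ whenever $\partial f$ is nonempty somewhere, and we state this first. For $3\Rightarrow2$, swap the roles of $x,y$ in condition 3 and add the two inequalities with suitable signs. Alternatively, we pass to conjugates: condition 3 says $f^*$ is inexactly strongly convex, and conjugating back yields condition 2. Concretely, we would write $f(y)-f(x)-\inner{g_x}{y-x}\le \inner{g_y-g_x}{y-x}-\frac{1}{2L}\|g_y-g_x\|^2+\delta\le \frac L2\|y-x\|^2+\delta$ by Young's inequality.

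\textbf{3 $\Rightarrow$ 4.} Sum condition 3 for $(x,y)$ and $(y,x)$ to get
\[
\frac1{L(\delta)}\|g_x-g_y\|^2\le \inner{g_x-g_y}{x-y}+2\delta\le \|g_x-g_y\|\,\|x-y\|+2\delta .
\]
Solving this quadratic inequality for $s=\|g_x-g_y\|$ gives $s\le \frac{L}{2}\big(t+\sqrt{t^2+8\delta/L}\big)$ with $t=\|x-y\|$. We then choose $\delta$ proportional to $t^{1+p}$ (for instance, to balance $L t^2$ against $\delta$) and simplify using the explicit form of $L(\delta)$. This yields $s\le C_p\beta t^p$, and we tune the proportionality constant to obtain $C_p=\big(\frac{p+1}{2p}\big)^p$. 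We expect this tuning to be the main obstacle: recovering exactly the stated constant may require a cleaner choice, such as minimizing the bound $\frac{L(\delta)}{2}t+\frac{2\delta}{t}$ obtained from a cruder estimate, over $\delta$. We would try that crude estimate first and check that its optimum gives $\big(\frac{p+1}{2p}\big)^p\beta t^p$, with $p=0$ handled by the limit.
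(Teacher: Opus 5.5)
There is a genuine gap in $3\Rightarrow4$: you never carry out the choice of $\delta$, and the fallback you plan to try is invalid. The other implications are fine.

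\textbf{The implications that work.} Your $1\Rightarrow2$ and $3\Rightarrow2$ are correct and are essentially what the paper relies on. Integrating condition 1 gives $\frac{\beta}{1+p}t^{1+p}$, which equals $\min_{\delta\ge0}\{\frac{L(\delta)}{2}t^2+\delta\}$, attained at $\delta=\frac{1-p}{2(1+p)}\beta t^{1+p}$. The swapped cocoercivity inequality plus Young is exactly the paper's argument in \cref{lemma:L(delta) conjugate equivalence}. For $2\Rightarrow3$ you tilt by $g_x$ and take a step of length $1/L(\delta)$ from $y$. The paper instead passes through $1/L(\cdot)$-inexact strong convexity of $f^*$. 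Both work: yours is more elementary, and the paper's gives the conjugate characterization as a by-product. One caveat: your claim $\dom{f}=\R^d$ needs condition 2 for all $y\in\R^d$, as in \eqref{def: inexactly smooth}. With the literal quantifier $y\in\dom{f}$ it does not follow. Indeed $f=\iota_{\{0\}}$ satisfies 2 but not 3, so say explicitly that you use the global reading (the paper's supremum over $z$ needs it too).

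\textbf{The gap in $3\Rightarrow4$.} Summing the two cocoercivity inequalities to get $s^2/L(\delta)\le st+2\delta$, with $s=\|g_x-g_y\|$ and $t=\|x-y\|$, is the right start. The problem is the fallback. The bound $s\le\frac{L(\delta)}{2}t+\frac{2\delta}{t}$ does not follow from that inequality; using $\sqrt{t^2+8\delta/L}\le t+\frac{4\delta}{Lt}$ gives only $s\le L(\delta)t+\frac{2\delta}{t}$. Moreover, your bound is false. At $p=1$, $\delta=0$ it reads $\|g_x-g_y\|\le\frac{\beta}{2}\|x-y\|$, which $\frac{\beta}{2}\|x\|^2$ violates. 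The valid crude bound optimizes only to $\frac{2}{1+p}\beta t^p$. Any weakening of your exact root can only enlarge the constant, and the exact root already optimizes to the target.

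\textbf{The missing step.} Write your inequality as $\frac{s^2}{2L(\delta)}-\delta\le\frac{st}{2}$ for every $\delta\ge0$ and take the supremum over $\delta$. That supremum is $\theta_L^*(s)=\frac{p}{1+p}\beta^{-1/p}s^{(1+p)/p}$, the conjugate of $\theta_L(t)=\frac{\beta}{1+p}|t|^{1+p}$ (cf.\ \cref{lemma:theta_L-and-conjugate_properties}). Then $\frac{2p}{1+p}\beta^{-1/p}s^{(1+p)/p}\le st$ is exactly $s\le(\frac{p+1}{2p})^p\beta t^p$. Equivalently, plug $\delta=\frac{1-p}{4p}(\frac{p+1}{2p})^p\beta t^{1+p}$ into your root formula, which returns the constant exactly. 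For $p=0$, divide by $\delta$ and let $\delta\to\infty$ to get $s\le\beta$.
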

\begin{proof}
    The proof of~\cite[Lemma 1]{li2024simpleuniformlyoptimalmethod} proves $(1. \Longrightarrow 2. \Longrightarrow 3. \Longrightarrow 4.)$ for all $p \in (0,1)$. Considering limits as $p \to 0$ and $p \to 1$ extends this to $p\in [0,1]$. In \cref{lemma:L(delta) conjugate equivalence}, we will provide the final needed implication, establishing $(2 \Longleftrightarrow 3.)$.
 \end{proof}

Note that the above implications are equivalent up to this factor of $\left(\frac{p+1}{2p}\right)^p \leq 1.263$. This factor is tight. Hence, a small constant factor gap is fundamental when approximating the family of \Holder smooth functions via inexact smoothness. Tightness is demonstrated by the following function that satisfies conditions 2.~through 4., but holds with equality in 4.~when $x=0$ and $y=1$,
\begin{equation}\label{ex:tightness-Holder-condition}f_{\beta, p}(x) := \begin{cases}
    0 & x < 0\\
    \frac{1}{2}\beta\left(\frac{p+1}{2p}\right)^{p}x^2 & 0 \leq x \leq 1\\
    \beta\left(\frac{p+1}{2p}\right)^{p}\left(x-\frac{1}{2}\right) & x > 1.
\end{cases}\end{equation}

\subsection{Calculus of $L(\cdot)$-inexactly smooth functions}\label{subsec:calculus}

Here, we develop a calculus for $L(\cdot)$-inexactly smooth functions previously defined in \eqref{def: inexactly smooth}.
Complementing this, we say a convex function $f$ is $\mu(\cdot)$-inexactly strongly convex if for all $\delta\geq 0$,
\begin{equation}\label{def: inexactly mu strong convexity}
    f(y) \geq f(x) + \inner{g_x}{y-x} + \frac{\mu(\delta)}{2}\|x-y\|^2 - \delta, \quad \forall x, y \in \mathbb{R}^d,\ g_x\in\partial f(x).
\end{equation}

Our development of characterizations of these inexact quantities closely follows the classical developments of smoothness and strong convexity. Consequently, we defer proofs to \cref{sec: appendix L(delta) calc} when they are analogous.

\noindent {\bf Assumptions on $L(\delta)$:} Inexact smoothness functions $L \colon [0,\infty) \to (0, \infty]$ are finite for $\delta > 0$, convex, lower semicontinuous, nonincreasing, and have $-1/L(\cdot)$ convex. 

\begin{lemma}\label{lemma:equivalences-of-convexity-assumptions}
    For any $L \colon [0,\infty) \to (0, \infty]$ that is convex, lower semicontinuous, and nonincreasing, one has that
    $$ -1/L(\delta)\text{ is convex} \quad \iff \quad s \Lleft(s) \text{ is convex},$$
    where we define the left inverse $ \Lleft(s) := \inf\{\delta  \geq 0 : L(\delta) \leq s\}. $
\end{lemma}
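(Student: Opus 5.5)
The plan is to factor the equivalence through two classical one-dimensional facts. The first is that a nondecreasing function is concave exactly when its (generalized) inverse is convex. The second is the perspective-type fact that, for a function $\varphi$ on $(0,\infty)$, the map $s\mapsto s\,\varphi(1/s)$ is convex on $(0,\infty)$ if and only if $\varphi$ is convex there. Concretely:
\begin{itemize}
\item since $L$ is nonincreasing and positive, $-1/L$ is convex if and only if $h(\delta):=1/L(\delta)\in[0,\infty)$ is concave and nondecreasing;
\item I set $\varphi(w):=\Lleft(1/w)$ for $w>0$, which gives $s\Lleft(s)=s\,\varphi(1/s)$.
\end{itemize}

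\textbf{Step 1 (sublevel sets).} Because $L$ is lower semicontinuous and nonincreasing, for each $s>0$ the set $\{\delta\geq 0: L(\delta)\leq s\}$ is a closed up-ray $[\Lleft(s),\infty)$, or empty, in which case $\Lleft(s)=+\infty$. This gives the key identity
\[
L(\delta)\leq s \iff \delta\geq \Lleft(s), \qquad\text{equivalently}\qquad h(\delta)\geq w \iff \delta\geq \varphi(w)\quad (w=1/s).
\]
In words, the hypograph of $h$ over $\delta\geq 0$, intersected with $\{w>0\}$, is the reflection $(\delta,w)\mapsto(w,\delta)$ of the epigraph of $\varphi$.

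\textbf{Step 2 (inverse step).} I will show that $h$ is concave if and only if $\varphi$ is convex. By Step 1, a set in the $(\delta,w)$-plane is convex exactly when its reflection is. So convexity of $\operatorname{hypo}(h)\cap\{w>0\}$ is equivalent to convexity of $\operatorname{epi}(\varphi)$. It then remains to check that convexity of $\operatorname{hypo}(h)\cap\{w>0\}$ is the same as concavity of $h$. This uses that $h$ is nonnegative and nondecreasing: removing the half-plane $w\leq 0$ does not destroy or create convexity, because the part with $w\le 0$ is just $\{\delta\ge0\}\times(-\infty,0]$, and gluing it on preserves convexity.

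\textbf{Step 3 (perspective step).} I will show that $\varphi$ is convex on $(0,\infty)$ if and only if $s\,\varphi(1/s)$ is convex on $(0,\infty)$. The function $s\varphi(1/s)$ is the perspective $(t,x)\mapsto t\varphi(x/t)$ restricted to the line $x=1$, so convexity of $\varphi$ implies convexity of $s\varphi(1/s)$. The transform is an involution: applying it to $\psi(s):=s\varphi(1/s)$ gives $w\,\psi(1/w)=\varphi(w)$. Hence the converse follows from the same argument. Extended values $+\infty$ are handled because perspectives of closed convex extended-valued functions remain convex.

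Chaining Steps 1--3 gives
\[
-1/L \text{ convex} \iff h \text{ concave} \iff \varphi \text{ convex} \iff s\Lleft(s) \text{ convex}.
\]

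The main obstacle is Step 2 at the boundary: flat pieces of $L$, the value $L(0)=\infty$ (so $h(0)=0$), and $\Lleft(s)=\infty$ for $s$ below $\inf L$. I plan to handle all of these purely through the set identity in Step 1. Where needed, I would verify convexity of the sets directly along segments, rather than arguing with pointwise inverses.
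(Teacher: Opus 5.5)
There is a genuine gap in Step 2, in the direction ``$\varphi$ convex $\Rightarrow$ $h$ concave.'' You claim that gluing $\{\delta\geq 0\}\times(-\infty,0]$ onto $H^+:=\mathrm{hypo}(h)\cap\{w>0\}$ preserves convexity. This is false whenever $h$ vanishes on an interval $[0,a)$ with $a>0$, and nonnegativity and monotonicity of $h$ do not rule that out. Concretely, take $L(\delta)=+\infty$ on $[0,1)$ and $L(\delta)=1$ on $[1,\infty)$. This $L$ is positive, convex, lower semicontinuous and nonincreasing. Then:
\begin{itemize}
\item $h$ is $0$ on $[0,1)$ and $1$ on $[1,\infty)$, so $H^+=[1,\infty)\times(0,1]$ is convex;
\item $\varphi$ is $1$ on $(0,1]$ and $+\infty$ on $(1,\infty)$, which is convex;
\item $s\Lleft(s)$ is $s$ on $[1,\infty)$ and $+\infty$ on $(0,1)$, which is convex;
\item yet $-1/L$ is not convex: at $\delta=0,\tfrac12,1$ it takes the values $0,0,-1$.
\end{itemize}
The set identity of Step 1 only sees $w>0$, so no boundary bookkeeping can detect where $h=0$. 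In fact the ``$\Leftarrow$'' direction of the statement, read literally under the lemma's listed hypotheses, is false.

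The missing hypothesis is that $L$ is finite on $(0,\infty)$, which is part of the paper's standing assumptions. Your route is essentially the paper's, split in two: your reflection $(\delta,w)\mapsto(w,\delta)$ followed by your Step 3 perspective $(w,\delta)\mapsto(1/w,\delta/w)$ is exactly the paper's map $(\delta,\alpha)\mapsto(1/\alpha,\delta/\alpha)$. The paper's proof has the same weak spot, since it too transports only $\mathrm{hypo}(1/L)\cap\{\alpha>0\}$. (Your Step 1 is more careful than the paper about why the infimum defining $\Lleft$ is attained.)

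Under $L<\infty$ on $(0,\infty)$, i.e.\ $h>0$ there, you can close the gap directly.
\begin{itemize}
\item Whenever $h(\delta_1),h(\delta_2)>0$, both graph points lie in $H^+$, and convexity of $H^+$ gives the concavity inequality.
\item The only remaining case is $\delta_2=0$ with $h(0)=0$. For $\delta_1>0$, $t\in(0,1)$ and small $\epsilon>0$, apply convexity of $H^+$ to $(\epsilon,h(\epsilon))$ and $(\delta_1,h(\delta_1))$, with weight $t_\epsilon=(t\delta_1-\epsilon)/(\delta_1-\epsilon)$ on the latter. This gives $h(t\delta_1)\geq t_\epsilon h(\delta_1)$. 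Letting $\epsilon\to 0$ yields $h(t\delta_1)\geq t\,h(\delta_1)+(1-t)h(0)$.
\end{itemize}
The forward direction of your argument, which is the only direction the paper later uses (for convexity of the dual objective), is fine as written.
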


\begin{proof}
Consider the set $\mathcal S:=\left\{\left(\frac{1}{\alpha},\frac{\delta}{\alpha}\right): (\delta,\alpha)\in \mathrm{hypo}(1/L), \, \alpha > 0\right\}$. We claim that $\mathcal S$ is the epigraph of $s\mapsto s\Lleft(s)$. The condition $(\delta,\alpha)\in\mathrm{hypo}(1/L)$ means $\alpha\le 1/L(\delta)$, equivalently $L(\delta)\le 1/\alpha$. Considering the definition of $\mathcal{S}$ and writing $s:=1/\alpha$ and $z:=\delta/\alpha=\delta s$, the previous inequality states there exists $\delta \geq 0$ such that $L(\delta)\le s$ and $z=\delta s$. Since $L(\cdot)$ is nonincreasing and $1/L(\cdot)$ is nondecreasing, we may rewrite
$$
\mathcal S=\{(s,z):\ \exists \delta>0\ \text{s.t.}\ L(\delta)\le s\ \text{and}\ z\ge \delta s\}.
$$
Fix some $s>0$. Among all $\delta \geq0$ with $L(\delta)\le s$, the smallest possible value of $\delta s$ is
$$
s\cdot\inf\{\delta \geq 0:\ L(\delta)\le s\}=s\,\Lleft(s).
$$
Hence $\mathcal S$ is the epigraph of the function $s\mapsto s\Lleft(s)$.

The following equivalences hold between convexity of functions and convexity of their associated epigraphs/hypographs $$-1/L(\delta) \Longleftrightarrow \mathrm{hypo}(1/L(\delta)) \Longleftrightarrow \mathrm{epi}(s \Lleft(s)) \Longleftrightarrow s \Lleft(s),$$
where the first and last implications note convexity and concavity are equivalent to convexity of epigraphs and hypographs, and the middle  holds as $\mathcal{S}$ is a perspective transformation of the hypograph intersected with a halfplane~\cite[Section 2.3.3]{boyd2004convex}. 
\end{proof}

First, we note that one can relax an inexact smoothness function $L(\cdot)$ (under our assumptions) into the sum of two inexact smoothness functions previously seen for $p=1$ and $p=0$ \Holder smoothness, $L(\delta) \leq \beta_1 + \beta_0^2/(2\delta)$. Therefore, these extremal cases of \Holder smoothness are also extremal among inexactly smooth functions.  
\begin{lemma}\label{smooth plus nonsmooth bound on L}
    If $L(\cdot)$ satisfies our assumptions, then for any $a > 0$, $L(\delta) \leq \frac{aL\left(a\right)}{\delta}+L\left(a\right)$.
\end{lemma}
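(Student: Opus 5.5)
The plan is to split according to whether $\delta \geq a$ or $\delta < a$. Each term on the right-hand side handles one case, and since both terms are nonnegative, bounding $L(\delta)$ by either one alone is enough. The only assumptions I expect to need are monotonicity and the convexity of $-1/L(\cdot)$. Neither convexity nor lower semicontinuity of $L$ itself should be required.

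\emph{Case $\delta \geq a$.} Since $L$ is nonincreasing, $L(\delta) \leq L(a) \leq \frac{aL(a)}{\delta} + L(a)$, because $aL(a)/\delta \geq 0$.

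\emph{Case $0 \le \delta < a$.} If $\delta = 0$, the right-hand side is $+\infty$ by the convention $1/0=\infty$, so the bound is trivial. For $0<\delta<a$, I would use that $1/L(\cdot)$ is concave and takes values in $[0,\infty)$, because $L$ maps into $(0,\infty]$ and $1/\infty = 0$. Writing $\delta = \frac{\delta}{a}\cdot a + \left(1-\frac{\delta}{a}\right)\cdot 0$ as a convex combination of $a$ and $0$, concavity gives
$$\frac{1}{L(\delta)} \;\geq\; \frac{\delta}{a}\cdot\frac{1}{L(a)} + \left(1-\frac{\delta}{a}\right)\frac{1}{L(0)} \;\geq\; \frac{\delta}{a\,L(a)},$$
where the last step drops the nonnegative term involving $1/L(0)$. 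Since $L(a)$ is finite ($a>0$), inverting yields $L(\delta) \leq \frac{aL(a)}{\delta} \leq \frac{aL(a)}{\delta} + L(a)$.

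The only delicate step is applying concavity at the endpoint $0$, where $L(0)$ may equal $+\infty$. With the convention $1/\infty=0$, the function $-1/L$ is a finite-valued convex function on $[0,\infty)$, so the chord inequality is legitimate even at the endpoint. The argument needs only the nonnegativity of $1/L(0)$, so its exact value plays no role.
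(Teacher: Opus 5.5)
Your proof is correct and follows the same route as the paper. Monotonicity handles $\delta \ge a$. For $\delta \le a$, the chord inequality for the concave function $1/L$ between $0$ and $a$, after dropping the nonnegative endpoint term, gives $L(\delta)\le aL(a)/\delta$. The only cosmetic difference is that the paper writes the endpoint value as $\lim_{\delta\to0^+}L(\delta)$ rather than $L(0)$; this is immaterial, since only its nonnegativity is used.
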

\begin{proof}
   By concavity of $1/L(\cdot)$ it holds that for any $a > 0$ and $t \in [0,1]$ that $$\frac{1}{L(ta)} = \frac{1}{L(ta + (1-t)0)} \geq \frac{t}{L(a)}+\frac{1-t}{\lim_{\delta \to 0^+}L(\delta)} \geq \frac{t}{L(a)},$$ where the last inequality comes from the positivity of $L(\cdot)$. Letting $\delta = ta$, this implies that $L(\delta) \leq \frac{aL(a)}{\delta}$ for any $\delta\in[0,a]$. Then by monotonicity of $L(\cdot)$, $L(\delta) \leq L(a)$ holds for all $\delta \geq a$. Summing the bounds from these two regimes gives the claim.
\end{proof}

Next, we derive useful equivalent characterizations of $L(\cdot)$-inexact smoothness. The following lemma shows that among convex functions $f$, inexact smoothness can equally be viewed as satisfying a cocoercivity-type condition or as the conjugate $f^*$ being inexactly strongly convex with $\mu(\cdot) = 1/L(\cdot)$. These follow similarly to the standard textbook fact for (non-inexactly) smooth convex functions.

\begin{lemma}\label{lemma:L(delta) conjugate equivalence}
    Let $f$ be closed, convex, and proper. The following are equivalent:
    \begin{enumerate}        
        \item $f(y) \geq f(x) + \inner{g_x}{y-x}+\frac{1}{2L(\delta)}\|g_y-g_x\|^2 - \delta, \quad \forall \delta \geq 0, \ \forall x,y \text{ with } g_x \in \partial f(x),\ g_y\in \partial f(y)$,
        \item $f$ is $L(\cdot)$-inexactly smooth,
        \item $f^*$ is $1/L(\cdot)$-inexactly strongly convex. 
    \end{enumerate}
\end{lemma}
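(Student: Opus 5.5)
The plan is to prove $2.\Leftrightarrow 3.$ by conjugate duality and $1.\Leftrightarrow 3.$ by rewriting the inexact strong convexity inequality for $f^*$ using the Fenchel--Young equality. Throughout, I use that for closed convex proper $f$ one has $g\in\partial f(x)\iff x\in\partial f^*(g)$. In that case~\eqref{eq:Fenchel-Young} holds with equality, $f^*(g)=\inner{g}{x}-f(x)$. If $L(\delta)=\infty$ (possible only at $\delta=0$), the quadratic terms are read as $0$ or $+\infty$ in the obvious way, and that case is trivial.

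\textbf{Step 1 ($2.\Leftrightarrow 3.$).} Fix $x$, $g_x\in\partial f(x)$ and $\delta\ge 0$, and set
$$h(y):=f(x)+\inner{g_x}{y-x}+\tfrac{L(\delta)}{2}\|y-x\|^2+\delta .$$
Condition 2 says exactly $f\le h$, which is equivalent to $h^*\le f^*$, since $f=f^{**}$ and $h=h^{**}$ (both are closed convex). A direct computation, together with $f^*(g_x)=\inner{g_x}{x}-f(x)$, gives
$$h^*(g)=f^*(g_x)+\inner{x}{g-g_x}+\tfrac{1}{2L(\delta)}\|g-g_x\|^2-\delta .$$
Since $x\in\partial f^*(g_x)$, the inequality $f^*\ge h^*$ is precisely the $1/L(\cdot)$-inexact strong convexity inequality~\eqref{def: inexactly mu strong convexity} for $f^*$ at the base point $g_x$ with subgradient $x$. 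As $(x,g_x)$ ranges over the graph of $\partial f$, the pair $(g_x,x)$ ranges over all base points and subgradients of $f^*$. So the two conditions coincide pointwise in $\delta$.

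\textbf{Step 2 ($3.\Rightarrow 1.$).} Take $x,y$ with $g_x\in\partial f(x)$ and $g_y\in\partial f(y)$. Apply 3.\ at base point $g_y$ (subgradient $y$) and evaluation point $g_x$:
$$f^*(g_x)\ge f^*(g_y)+\inner{y}{g_x-g_y}+\tfrac{1}{2L(\delta)}\|g_x-g_y\|^2-\delta .$$
Substitute $f^*(g_x)=\inner{g_x}{x}-f(x)$ and $f^*(g_y)=\inner{g_y}{y}-f(y)$ and rearrange. This gives $f(y)\ge f(x)+\inner{g_x}{y-x}+\frac{1}{2L(\delta)}\|g_y-g_x\|^2-\delta$, which is condition 1.

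\textbf{Step 3 ($1.\Rightarrow 3.$), the main obstacle.} Running Step 2 backwards gives the inequality of 3.\ only when the evaluation point $g$ lies in $\mathrm{range}(\partial f)=\dom{\partial f^*}$. Condition 3, however, must hold for every $g\in\R^d$.
\begin{itemize}
\item If $g\notin\dom{f^*}$, the left side is $+\infty$ and there is nothing to prove.
\item If $g\in\mathrm{ri}\,\dom{f^*}$, then $\partial f^*(g)\neq\emptyset$, so $g=g_y$ for some $y$, and Step 2 reversed applies.
\item For the remaining $g\in\dom{f^*}\setminus\mathrm{ri}\,\dom{f^*}$, pick $\bar g\in\mathrm{ri}\,\dom{f^*}$ and set $g_t=(1-t)\bar g+tg$ for $t\in[0,1)$. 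Each $g_t$ lies in the relative interior, so the inequality holds at $g_t$. The right-hand side is continuous in $g$. For the closed convex function $f^*$, the restriction to a segment is continuous on the closed segment, so $f^*(g_t)\to f^*(g)$ as $t\to 1^-$. Passing to the limit gives the inequality at $g$.
\end{itemize}
This closes the cycle $1.\Leftrightarrow 3.\Leftrightarrow 2.$
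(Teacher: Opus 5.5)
Your proof is correct, but it is organized differently from the paper's.

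\textbf{The paper's route.} The paper runs the cycle $1.\Rightarrow 2.\Rightarrow 3.\Rightarrow 1.$ as follows.
\begin{itemize}
\item $1.\Rightarrow 2.$ It applies condition 1 with $x,y$ swapped and bounds $\inner{g_y-g_x}{y-x}\le\frac{L(\delta)}{2}\|y-x\|^2+\frac{1}{2L(\delta)}\|g_y-g_x\|^2$ by Young's inequality.
\item $2.\Rightarrow 3.$ It inserts the quadratic upper model into $f^*(g)\ge\inner{z}{g}-f(z)$ and takes the supremum over $z$. This is the one-sided version of your computation of $h^*$.
\item $3.\Rightarrow 1.$ It uses the same Fenchel--Young substitution as your Step 2.
\end{itemize}

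\textbf{Your route.} You get $2.\Leftrightarrow 3.$ at once from order reversal of conjugation together with $f=f^{**}$ and $h=h^{**}$. You then replace the Young's-inequality step with a direct $1.\Rightarrow 3.$, which forces you to handle evaluation points $g\in\dom{f^*}\setminus\mathrm{range}(\partial f)$.

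\textbf{What each buys.} The paper's argument is shorter and more elementary. However, its $1.\Rightarrow 2.$ step only produces the upper bound at points $y$ with $\partial f(y)\neq\emptyset$. Inexact smoothness~\eqref{def: inexactly smooth} is required for all $y\in\R^d$, and whenever $L(\delta)<\infty$ this forces $\dom{f}=\R^d$. The paper leaves the remaining points implicit. Your relative-interior extension followed by biconjugation covers exactly those points. So the extra work in your Step 3 is what makes the equivalence hold as literally stated.

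\textbf{A simplification.} In that limiting step you only need $\limsup_{t\to 1^-} f^*(g_t)\le f^*(g)$. This follows directly from convexity, since $f^*(g_t)\le(1-t)f^*(\bar g)+t f^*(g)$. Continuity of $f^*$ on the closed segment is true, by closedness, but more than you need.
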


The following three lemmas provide collections of calculus rules for constructing inexactly smooth functions and quantifying their inexact smoothness $L(\cdot)$. As with the above result, their proofs are analogous to the standard smooth and convex facts and hence, deferred to \cref{sec: appendix L(delta) calc}. Note, by considering conjugate functions and \cref{lemma:L(delta) conjugate equivalence}, equivalent calculus rules for inexactly strongly convex functions could be derived directly from these.
\begin{lemma}\label{L(delta) scaling calculus}
    Consider any convex, $L(\cdot)$-inexactly smooth function $f$. Then, 
    \begin{enumerate}
        \item $\alpha f(x)$ is $\alpha L(\cdot/\alpha)$-inexactly smooth for any $\alpha > 0$,
        \item $f(Ax - b)$ is $\|A\|_{\mathrm{op}}^2L(\cdot)$-inexactly smooth,
        \item Suppose $F(x) = \inf_z f(x,z)$ is closed and proper with attainment for each $x$, then $F(x)$ is $L(\cdot)$-inexactly smooth.
    \end{enumerate}
\end{lemma}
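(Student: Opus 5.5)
I will verify the three claims directly from the definition \eqref{def: inexactly smooth}, each time using a subgradient calculus rule to identify subgradients of the new function.

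\textbf{Part 1 (scaling).} For $\alpha>0$ we have $\partial(\alpha f)(x)=\alpha\,\partial f(x)$, so every subgradient of $\alpha f$ at $x$ has the form $\alpha g_x$ with $g_x\in\partial f(x)$. Given $\delta\ge 0$, apply \eqref{def: inexactly smooth} to $f$ with tolerance $\delta/\alpha$:
\[
f(y)\le f(x)+\langle g_x,y-x\rangle+\tfrac{L(\delta/\alpha)}{2}\|y-x\|^2+\delta/\alpha .
\]
Multiplying by $\alpha$ gives exactly the bound for $\alpha f$ with function $\alpha L(\cdot/\alpha)$ and additive error $\delta$. This new function is still convex, lower semicontinuous and nonincreasing. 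Also $-1/(\alpha L(\delta/\alpha))$ is a positive multiple of a convex function composed with a linear map, so it is convex, and the standing assumptions are preserved.

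\textbf{Part 2 (affine precomposition).} Let $h(x)=f(Ax-b)$. By the chain rule we only know the inclusion $A^\top\partial f(Ax-b)\subseteq\partial h(x)$, and equality can fail without a qualification condition. So I will argue differently. Take any $g\in\partial h(x)$ and set $u=Ax-b$. For every $w$, the subgradient inequality for $h$ gives
\[
\phi(w):=f(u+Aw)-\langle g,w\rangle\ge f(u).
\]
I first want $\phi(w)\le f(u)+\tfrac{L(\delta)}{2}\|A\|_{\mathrm{op}}^2\|w\|^2+\delta$ for $w=y-x$. If $A^\top\partial f(u)$ contains $g$, this is immediate: write $g=A^\top g_u$ and bound $\|Aw\|\le\|A\|_{\mathrm{op}}\|w\|$.

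Otherwise I will use Lemma~\ref{lemma:L(delta) conjugate equivalence}. Inexact smoothness forces $f$ to be finite everywhere, since the right-hand side of \eqref{def: inexactly smooth} is finite for $\delta>0$, where $L(\delta)<\infty$. Hence $\dom f=\R^d$, $f$ is continuous, and the standard chain rule holds with equality: $\partial h(x)=A^\top\partial f(Ax-b)$. With this the general case reduces to the immediate one, and this is the route I will take. The only care needed is to note that finiteness of $L(\delta)$ for $\delta>0$ gives a finite quadratic majorant at every point.

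\textbf{Part 3 (partial minimization).} This is the main step. Fix $x$ and $g\in\partial F(x)$, and let $z_x$ attain $F(x)=f(x,z_x)$. I claim that $(g,0)\in\partial f(x,z_x)$. Indeed, for all $(y,z)$,
\[
f(y,z)\ge F(y)\ge F(x)+\langle g,y-x\rangle=f(x,z_x)+\langle (g,0),(y,z)-(x,z_x)\rangle .
\]
Now apply \eqref{def: inexactly smooth} to $f$ at $(x,z_x)$ with subgradient $(g,0)$ and test point $(y,z_x)$. This gives
\[
F(y)\le f(y,z_x)\le F(x)+\langle g,y-x\rangle+\tfrac{L(\delta)}{2}\|y-x\|^2+\delta .
\]
The $z$-component of the displacement is zero, so no extra term appears. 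Convexity of $F$ follows because it is a partial minimization of a jointly convex function, and closedness and properness are assumed, so $F$ is $L(\cdot)$-inexactly smooth.

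The expected obstacle is Part 2's subdifferential chain rule. I will settle it via the full-domain observation above; Parts 1 and 3 are direct.
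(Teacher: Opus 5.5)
Your proof is correct and follows the paper's route in all three parts: rescaling $\delta\mapsto\delta/\alpha$, the chain rule combined with $\|Aw\|\le\|A\|_{\mathrm{op}}\|w\|$, and applying the inequality for $f$ at $(x,z_x)$ with subgradient $(g,0)$ and test point $(y,z_x)$. Your extra care in two places is a genuine improvement. In Part 2, the paper only states the inclusion $A^\top\partial f(Ax-b)\subseteq\partial h(x)$, which by itself does not cover every subgradient of $h$; your observation that finiteness of $L(\delta)$ for $\delta>0$ forces $\mathrm{dom}\,f=\R^d$ gives equality in the chain rule. (That observation uses some point where $\partial f\neq\emptyset$, which exists for proper convex $f$.) In Part 3, your direct check that $(g,0)\in\partial f(x,z_x)$ replaces the paper's citation of the infimal-projection subdifferential formula.
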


\begin{lemma}\label{lemma:L(delta) sums and max calculus}
    Consider any closed, convex, and proper $L_i(\cdot)$-inexactly smooth functions $f_i$ for $i=1,\dots,m$. Then, \begin{enumerate}
        \item $\sum_{i=1}^m f_i$ is $\left(\inf_{\substack{\sum_{i=1}^m \delta_i = \delta\\ \delta_i \geq 0}} \sum_{i=1}^m L_i(\delta_i)\right)$-inexactly smooth,
        \item $(\max_i f_i^*)^*$ is $\max_i L_i(\delta)$-inexactly smooth,
        \item $\left(\sum_{i=1}^m f_i^*\right)^*$ is $\left(\sup_{\substack{\sum_{i=1}^m \delta_i = \delta\\ \delta_i \geq 0}} \sum_{i=1}^m \frac{1}{L_i(\delta_i)}\right)^{-1}$-inexactly smooth.
    \end{enumerate}
\end{lemma}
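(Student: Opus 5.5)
We prove the three items in order. Items 2 and 3 reduce to statements about inexact strong convexity of conjugates, via \cref{lemma:L(delta) conjugate equivalence}.

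\textbf{Item 1 (sums).} Fix $x,y$ and $g=\sum_i g_i$ with $g_i\in\partial f_i(x)$. Fix also $\delta\ge 0$ and any split $\delta=\sum_i\delta_i$ with $\delta_i\ge 0$. Sum the inequalities \eqref{def: inexactly smooth} for each $f_i$ at $(x,y,g_i,\delta_i)$ to get
\[
f(y)\le f(x)+\langle g,y-x\rangle+\tfrac{1}{2}\Big(\sum_i L_i(\delta_i)\Big)\|y-x\|^2+\delta .
\]
Taking the infimum over splits gives the claimed $L(\delta)$. Two issues remain.

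\begin{itemize}
\item Not every subgradient of the sum need decompose as $\sum_i g_i$. I would avoid needing a constraint qualification as follows. When some $L_i(\delta_i)<\infty$ for $\delta_i>0$, the function $f_i$ is finite everywhere with nonempty subdifferentials, since the upper bound forces $\mathrm{dom} f_i=\R^d$. Then $\partial(\sum_i f_i)=\sum_i\partial f_i$ holds by the standard sum rule.
\item At $\delta=0$ the infimum is attained at $\delta_i=0$, or the bound is trivially $+\infty$.
\end{itemize}

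\textbf{Item 2 (max of conjugates).} Set $h=\max_i f_i^*$. By \cref{lemma:L(delta) conjugate equivalence}, each $f_i^*$ is $1/L_i(\cdot)$-inexactly strongly convex. Take $g\in\partial h(u)$. By the max-rule for subdifferentials, $g\in\mathrm{conv}\{\partial f_i^*(u): i\in I(u)\}$, where $I(u)$ is the active set; write $g=\sum_{i\in I(u)}\lambda_i g_i$. Then, since $h\ge f_i^*$ and $h(u)=f_i^*(u)$ for active $i$,
\[
h(v)\ge\sum_i\lambda_i f_i^*(v)\ge h(u)+\langle g,v-u\rangle+\sum_i\lambda_i\tfrac{1}{2L_i(\delta)}\|v-u\|^2-\delta .
\]
This yields modulus $\min_i 1/L_i(\delta)=1/\max_i L_i(\delta)$. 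Apply \cref{lemma:L(delta) conjugate equivalence} again to $h^*=(\max_i f_i^*)^*$. Closedness gives $h^{**}=h$, so the conjugate of $h^*$ is $h$.

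Two technical points need care here. The max-rule requires continuity or interior points; I expect to handle it by the finiteness argument from item 1 or by a limiting argument. Also, $\max_i L_i$ must satisfy the standing assumptions for the lemma to apply. Convexity, monotonicity, lower semicontinuity, and concavity of $1/\max_i L_i=\min_i 1/L_i$ are all preserved, so this holds.

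\textbf{Item 3 (sum of conjugates).} Again by \cref{lemma:L(delta) conjugate equivalence}, each $f_i^*$ is $1/L_i$-inexactly strongly convex. Summing as in item 1, with an arbitrary split $\delta=\sum_i\delta_i$, shows $\sum_i f_i^*$ is $\mu(\delta)$-inexactly strongly convex with
\[
\mu(\delta)=\sup_{\sum_i\delta_i=\delta,\ \delta_i\ge 0}\ \sum_i \frac{1}{L_i(\delta_i)} .
\]
We then conclude with the lemma applied to $1/\mu$.

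The main obstacle in this item is verifying that $1/\mu$ meets the standing assumptions (if the lemma's proof genuinely needs them). Concavity of $\mu$ holds because $\mu$ is a supremal convolution of concave functions. Monotonicity follows because each $1/L_i$ is nondecreasing. Lower semicontinuity of $1/\mu$ and convexity of $1/\mu$ still need checking.

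If convexity of $1/\mu$ fails, I would inspect the proof of \cref{lemma:L(delta) conjugate equivalence}. The direction from strong convexity of the conjugate to smoothness likely uses only the pointwise inequality for each fixed $\delta$, so it should not need these structural assumptions. The same subdifferential sum-rule caveat as in item 1 applies, and I would resolve it in the same way.
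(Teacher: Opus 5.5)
Your proposal follows the paper's proof: item 1 sums the quadratic upper bounds over a split $\delta=\sum_i\delta_i$ and takes the infimum, using the sum rule (justified, as you note, by $\mathrm{dom}\, f_i=\R^d$). Items 2 and 3 establish $1/\max_i L_i(\cdot)$- and $\mu(\cdot)$-inexact strong convexity of $\max_i f_i^*$ and $\sum_i f_i^*$, via the max rule and by summing, and then invoke \cref{lemma:L(delta) conjugate equivalence}; that lemma's proof works pointwise in $\delta$, so your worry about the standing assumptions for $\max_i L_i$ and $1/\mu$ is unnecessary. One caution: your finiteness argument does not carry over to the conjugates (e.g., $f_i^*$ has bounded domain when $f_i$ is Lipschitz), so the max and sum rules in items 2 and 3 implicitly need a qualification condition---but the paper's proof leaves this unverified as well.
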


\begin{lemma}\label{composing with abs to norm}
    Consider any convex function $h : \R \to \R $ such that $h(|t|)$ is $L(\cdot)$-inexactly smooth. Then $f(x) = h(\|x\|)$ is $L(\cdot)$-inexactly smooth.  
\end{lemma}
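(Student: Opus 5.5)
We aim to verify the defining inequality \eqref{def: inexactly smooth} for $f(x)=h(\|x\|)$ directly, by reducing it to the one-dimensional inequality satisfied by $\phi(t):=h(|t|)$. Since $\phi$ is convex, even, and $L(\cdot)$-inexactly smooth on $\R$, we first record the structure of the subdifferential of $f$. Convexity of $\phi$ and evenness give that $h$ is nondecreasing on $[0,\infty)$ (a convex even function is minimized at $0$), so $f$ is convex. For $x\neq 0$, the chain rule gives $\partial f(x)=\{\, s\, x/\|x\| : s\in\partial\phi(\|x\|)\}$. For $x=0$, $\partial f(0)=\{g : \|g\|\le s_0\}$ with $s_0=\max\partial\phi(0)\ge 0$, where we use evenness, $\partial\phi(0)=[-s_0,s_0]$. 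In both cases every $g_x\in\partial f(x)$ can be written as $g_x=s\,u$, with $u$ a unit vector, $s\in\partial\phi(\|x\|)$, and $x=\|x\|u$. When $x=0$ and $g_x=0$ we may take any unit $u$ and $s=0\in\partial\phi(0)$; when $x=0$ and $g_x\ne 0$ we take $u=g_x/\|g_x\|$ and $s=\|g_x\|\in\partial\phi(0)$.

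Fix $x,y$, $g_x=s u$ as above, and $\delta\ge0$. Let $r=\|x\|$, so $x=ru$. Decompose $y=au+w$ with $w\perp u$, and set $\rho=\|y\|=\sqrt{a^2+\|w\|^2}$. The target inequality reads
\[
h(\rho)\le h(r)+s(a-r)+\tfrac{L(\delta)}{2}\big((a-r)^2+\|w\|^2\big)+\delta .
\]
The main step is a two-dimensional reduction. We compare against the one-dimensional bound applied at the point $r$ with subgradient $s\in\partial\phi(r)$ and target $t=\rho$:
\[
h(\rho)=\phi(\rho)\le \phi(r)+s(\rho-r)+\tfrac{L(\delta)}{2}(\rho-r)^2+\delta .
\]
It therefore suffices to show
\[
s(\rho-a)+\tfrac{L(\delta)}{2}\big[(\rho-r)^2-(a-r)^2-\|w\|^2\big]\le 0 .
\]
Expanding, $(\rho-r)^2-(a-r)^2-\|w\|^2=\rho^2-a^2-\|w\|^2-2r(\rho-a)=-2r(\rho-a)$. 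The requirement thus becomes
\[
(\rho-a)\,\big(s-L(\delta)\,r\big)\le 0 .
\]
Since $\rho\ge a$, we need $s\le L(\delta) r$, and this does not hold in general (for example when $r=0$ and $s>0$). This is the expected obstacle, so we handle it by splitting into cases.

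\emph{Case 1: $s\le L(\delta) r$.} The argument above finishes.

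\emph{Case 2: $s>L(\delta)r$.} Here we apply the one-dimensional inequality with a different target, $t=\tau$ chosen to make the two-dimensional bound easiest. The right-hand side $R(y)=h(r)+s(a-r)+\frac{L}{2}((a-r)^2+\|w\|^2)+\delta$ is minimized, for fixed $\rho$, by taking $a$ as small as possible. So it suffices to treat $a=-\rho$, $w=0$, i.e. $y=-\rho u$. There $R=\phi(r)+s(-\rho-r)+\frac{L}{2}(\rho+r)^2+\delta$, which is exactly the one-dimensional bound for $\phi$ at the point $r$ with target $-\rho$, and this bounds $\phi(-\rho)=h(\rho)$. The minimization claim needs checking. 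With $\|w\|^2=\rho^2-a^2$, we have $R=\text{const}+sa+\frac{L}{2}\big((a-r)^2+\rho^2-a^2\big)=\text{const}+(s-Lr)a+\frac L2(r^2+\rho^2)$. This is linear in $a\in[-\rho,\rho]$ with positive slope $s-Lr$ in Case 2, so it is minimized at $a=-\rho$, as claimed. In Case 1 the slope is nonpositive, so $R$ is minimized at $a=\rho$, which recovers Case 1 directly via target $t=\rho$. This unified view—$R$ is affine in $a$ on the sphere $\|y\|=\rho$, so its minimum is at $a=\pm\rho$, both covered by the one-dimensional inequality for the even function $\phi$—is the argument I would write. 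The remaining check is the $r=0$ subdifferential description, which holds by evenness and convexity of $\phi$.
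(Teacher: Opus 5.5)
Your proof is correct and is essentially the paper's argument: write $g_x = s u$ via the chain rule, fix $\|y\|=\rho$, and note that the right-hand side is affine in the component of $y$ along $u$, so its minimum on the sphere is at $y=\pm\rho u$ (the paper phrases this as the minimizer being collinear with $g_x$). You then conclude from the one-dimensional inequality for $h(|\cdot|)$ at $\pm\rho$, and your version is slightly more careful than the paper's, since you take $s\in\partial\phi(\|x\|)$ rather than $\partial h(\|x\|)$ and treat $x=0$ explicitly; the only thing left implicit is the trivial case $L(\delta)=\infty$.
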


Rather than retaining a family of inequalities for each $\delta\geq 0$, one may consider the strengthened inequality given by optimizing over all $\delta\geq 0$. To this end, we define
\begin{equation}\label{def:theta}
    \phi_L(s,\delta) := \frac{L(\delta)}{2}s^2 + \delta, \quad \theta_L(s
    ) := \inf_{\delta \geq 0} \phi_L(s,\delta).
\end{equation}
Below notes properties of the joint function $\phi_L(s,\delta)$ and its partial minimization $\theta_L$.

\begin{lemma}\label{lemma:phi_calc}
    Suppose $L(\cdot)$ satisfies our assumptions. Then $\phi_L(s,\delta)$ as defined in \eqref{def:theta} is jointly convex in $s$ and $\delta$. When attained, $\{\delta_\star(s)\} := \argmin_{\delta \geq 0} \phi_L(s,\delta)$ is a singleton, and if $s \ne 0$, then $L(\delta_\star(s)) < \infty$.
\end{lemma}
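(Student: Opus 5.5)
The plan has three parts: joint convexity, uniqueness of the minimizer, and finiteness of $L$ at the minimizer when $s\neq 0$. For joint convexity, I rewrite $\phi_L(s,\delta) = \frac{s^2}{2\,(1/L(\delta))} + \delta$. The map $(s,t)\mapsto s^2/(2t)$ on $t>0$ (extended by $0$ at $(0,0)$ and $+\infty$ otherwise at $t=0$) is jointly convex, since it is a quadratic-over-linear function. It is also nonincreasing in $t$. By our assumptions, $h(\delta):=1/L(\delta)$ is concave and nonnegative.

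I then use the standard composition rule: a function convex in $(s,t)$ and nonincreasing in $t$, composed with a concave $t=h(\delta)$, is jointly convex. Concretely, for $(s_i,\delta_i)$ and $\lambda\in[0,1]$, concavity gives $h(\lambda\delta_1+(1-\lambda)\delta_2)\ge \lambda h(\delta_1)+(1-\lambda)h(\delta_2)$. Monotonicity of $s^2/(2t)$ in $t$, followed by joint convexity of the quadratic-over-linear term, then yields the convexity inequality for $\phi_L$. Adding the linear term $\delta$ preserves convexity. I will handle the edge cases where $L(\delta)=\infty$ (so $h=0$) using the conventions $1/\infty=0$ and $s^2/0=\infty$ for $s\neq0$, and $0\cdot\infty=0$ when $s=0$.

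For the singleton claim, I split into two cases. If $s=0$, then $\phi_L(0,\delta)=\delta$, whose unique minimizer is $\delta=0$. If $s\neq 0$, I first show finiteness. Since $L$ is finite for $\delta>0$, $\phi_L(s,\cdot)$ is finite somewhere, so the minimum value is finite and any minimizer has $L(\delta_\star)<\infty$. Only $\delta=0$ could violate this, and it cannot be a minimizer if $L(0)=\infty$, because then $\phi_L(s,0)=\infty$.

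Uniqueness for $s\neq0$ is the main obstacle, since $\phi_L(s,\cdot)$ is convex but need not be strictly convex: $L$ may be piecewise constant. I would argue via the structure of $\psi(\delta):= \frac{s^2}{2}L(\delta)+\delta$. Suppose $\delta_1<\delta_2$ are both minimizers. Convexity makes $\psi$ constant on $[\delta_1,\delta_2]$, so $L$ is affine there with slope exactly $-2/s^2<0$. Then $1/L$ on that interval is the reciprocal of a positive, strictly decreasing affine function, which is strictly convex. This contradicts the concavity of $1/L$ (equivalently, convexity of $-1/L$) unless the interval is degenerate. Hence the minimizer is unique. Attainment itself is assumed in the statement, though lower semicontinuity together with the growth of $\delta$ would give it anyway.
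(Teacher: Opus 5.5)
Your proposal is correct and follows the paper's proof essentially step for step. Joint convexity comes from composing the quadratic-over-linear function with the concave $1/L$. Uniqueness comes from noting that a flat segment of minimizers forces $L(\delta)=\frac{2}{s^2}(m-\delta)$, which would make $1/L$ strictly convex there and contradict convexity of $-1/L$. Finiteness of $L(\delta_\star(s))$ for $s\neq 0$ follows from $\min_{\delta\ge 0}\phi_L(s,\delta)\le\phi_L(s,1)<\infty$.

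Your explicit handling of the $L(0)=\infty$ and $0\cdot\infty$ conventions, and proving finiteness before uniqueness, are only minor presentational refinements of the same argument.
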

\begin{proof}
   Consider the jointly convex function $g(u,v) = u^2/v$, decreasing in $v > 0$. Since $1/L(\delta)$ is concave, the composition $\phi_L(s,\delta) = g(s, 2/L(\delta)) + \delta$ is jointly convex. 
   
   We now show that $\argmin_{\delta \geq 0} \phi_L(s,\delta)$ is a singleton for all $s$. If $ s = 0$, this is trivial. Otherwise, without loss of generality, assume $s > 0$ (since $\phi_L(\cdot,\delta)$ is even) and suppose the convex level set, $\argmin_{\delta \geq 0} \phi_L(s,\delta) = [\delta_1, \delta_2]$. Because $\phi_L(s,\cdot)$ is convex, it must hold that $\phi_L(s,\delta)  = \theta_L(s)$ is constant on $[\delta_1, \delta_2]$. However, this implies that $$L(\delta) = \frac{2}{s^2}(\theta_L(s) - \delta), \quad \delta \in [\delta_1, \delta_2]$$ for fixed $s > 0$ and $\theta_L(s) > 0$. Since $L(\delta) > 0$ by assumption, $-1/L(\cdot)$ is strictly concave on this interval. This can only be compatible with the assumption that $-1/L(\cdot)$ is convex when $\delta_1 = \delta_2$. Therefore, we have $\argmin_{\delta \geq 0} \phi_L(s,\delta)$ as a singleton when attained. Finally, when $s \ne 0$, since $L(\cdot)$ is finite for all $\delta > 0$, it holds that
   $$L(\delta_\star(s)) \leq \frac{\min_{\delta \geq 0} \phi_L(s,\delta)}{s^2/2} \leq \frac{\phi_L(s,1)}{s^2/2} < \infty.  $$
\end{proof}
\begin{lemma}\label{lemma:theta_L-and-conjugate_properties}
    Suppose $L(\cdot)$ satisfies our assumptions. Then $$\theta_L^*(u) =\sup_{\delta \geq 0}\left\{\frac{1}{2L(\delta)}u^2-\delta\right\},$$ and both $\theta_L$ and $\theta_L^*$ are closed, convex, proper, and even.  Finally, $\theta_L$ vanishes only at $0$, and for all $s \ne 0$, the subdifferential is a singleton.
\end{lemma}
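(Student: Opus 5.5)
The conjugate formula follows from a direct computation that swaps two suprema. Write
\[
\theta_L^*(u)=\sup_s\{us-\inf_{\delta\ge0}\phi_L(s,\delta)\}=\sup_{\delta\ge 0}\sup_s\Big\{us-\tfrac{L(\delta)}{2}s^2\Big\}-\delta .
\]
The inner supremum over $s$ equals $\frac{u^2}{2L(\delta)}$. When $L(\delta)=\infty$, the convention $1/\infty=0$ is consistent here: the only admissible choice is $s=0$, which gives the value $0$. This yields the stated expression for $\theta_L^*$.

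\textbf{Properties of $\theta_L$.}
\begin{itemize}
\item \emph{Convexity.} $\theta_L$ is convex as a partial minimization of the jointly convex $\phi_L$ (\cref{lemma:phi_calc}).
\item \emph{Evenness.} $\theta_L$ is even since $\phi_L(\cdot,\delta)$ is even.
\item \emph{Finiteness.} $\theta_L$ is finite everywhere, since $\theta_L(s)\le\phi_L(s,1)<\infty$ and $\theta_L\ge0$. In particular it is proper.
\item \emph{Closedness.} A finite convex function on $\R$ is continuous, hence closed.
\item \emph{Properties of $\theta_L^*$.} It is a supremum of closed convex even functions of $u$, so it is closed, convex and even. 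It is proper because $\theta_L$ is proper (or directly, because $\theta_L^*(0)=\sup_\delta(-\delta)=0$).
\end{itemize}

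\textbf{Zeros of $\theta_L$.} Clearly $\theta_L(0)=\inf_\delta \delta=0$. For $s\neq0$ we show $\theta_L(s)>0$. Suppose $\theta_L(s)=0$. Then there are $\delta_k\to0$ with $\frac{L(\delta_k)}{2}s^2\to0$, so $L(\delta_k)\to0$. But $L$ is nonincreasing and positive, so $L(\delta_k)\ge L(1)>0$ for $\delta_k\le1$, a contradiction.

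\textbf{Uniqueness of the subgradient (main obstacle).} For $s\neq0$, differentiability of $\theta_L$ is the delicate step. Here is the plan.

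\emph{Step 1: attainment.} The infimum over $\delta$ is attained. On $[0,\theta_L(s)+1]$, the function $\phi_L(s,\cdot)$ is lower semicontinuous (because $L$ is), and values of $\delta$ outside this interval are suboptimal since $\phi_L\ge\delta$. So by \cref{lemma:phi_calc} there is a unique minimizer $\delta_\star=\delta_\star(s)$ with $L(\delta_\star)<\infty$.

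\emph{Step 2: a quadratic upper bound.} For all $t$,
\[
\theta_L(t)\le \tfrac{L(\delta_\star)}{2}t^2+\delta_\star=:q(t),
\]
with equality at $t=s$. The quadratic $q$ is differentiable, so any subgradient $g\in\partial\theta_L(s)$ satisfies $\theta_L(s)+g(t-s)\le\theta_L(t)\le q(t)$ for all $t$, with equality at $t=s$. Hence $g=q'(s)=L(\delta_\star)s$.

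This identifies the subdifferential as the singleton $\{L(\delta_\star(s))s\}$. Nonemptiness holds because $\theta_L$ is finite and convex on $\R$.

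I expect the only genuinely careful point to be the attainment and lower semicontinuity argument for $\phi_L(s,\cdot)$ near $\delta=0$, where $L$ may be $+\infty$. There we use lower semicontinuity of $L$ with values in $(0,\infty]$, together with $s\neq0$. Once attainment holds, the quadratic majorant argument closes the proof.
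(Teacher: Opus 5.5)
Your proof is correct. The conjugate formula, convexity, evenness, and the properties of $\theta_L^*$ go as in the paper. You are also more explicit than the paper on closedness and properness of $\theta_L$: you use finiteness, $0\le\theta_L(s)\le\phi_L(s,1)$, and continuity of finite convex functions on $\R$, where the paper only says these follow from the structure of $L$.

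Two steps take a genuinely different route.

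\emph{Positivity off zero.} The paper builds a uniform lower bound on $\phi_L(s,\cdot)$ by splitting at $\delta=1$, using concavity of $1/L$ and AM--GM on the region $\delta>1$. You instead observe that $\phi_L(s,\delta)\ge\delta$, so any minimizing sequence must have $\delta_k\to 0$, where $L(\delta_k)\ge L(1)>0$. This makes the $\delta>1$ analysis unnecessary.

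\emph{Singleton subdifferential.} The paper invokes the partial-minimization subdifferential formula \cite[Theorem 3.101]{Hoheisel2019}, $\partial\theta_L(s)=\{u:(u,0)\in\partial\phi_L(s,\delta_\star(s))\}$, and reads off the first coordinate from differentiability of $\phi_L(\cdot,\delta_\star(s))$. You use the elementary fact that if the convex $\theta_L$ is touched from above at $s$ by the differentiable majorant $q(t)=\tfrac{L(\delta_\star)}{2}t^2+\delta_\star$, then $q'(s)$ is its only possible subgradient there.

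What each route buys: yours is self-contained and needs only the existence of some minimizer $\delta_\star$ with $L(\delta_\star)<\infty$, not its uniqueness. You also prove that attainment, via lower semicontinuity of $L$ on the compact interval $[0,\theta_L(s)+1]$. The paper uses attainment implicitly, since \cref{lemma:phi_calc} is stated only ``when attained.'' The paper's route instead ties $\partial\theta_L$ to $\partial\phi_L$ through a general calculus rule. Both arguments yield the same formula $\partial\theta_L(s)=\{L(\delta_\star(s))s\}$ that is used later in the proof of \cref{thm:finite-cL}.
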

\begin{proof}
First, we verify that $\theta_L$ is closed, convex, and proper. Closedness and properness follow immediately from the assumed structure of $L(\cdot)$. Convexity follows since $\theta_L(s)$ is the partial minimization of $\phi_L(s,\delta)$, see~\cite[Theorem 5.3]{rockafeller}.

Closedness and convexity of $\theta_L^*$ follow as it is a conjugate. Properness of $\theta_L^*$ follows from the properness of $\theta_L$. The claimed formula for this conjugate follows as
\begin{align*}
  \sup_s \sup_{\delta \geq 0}\left\{ us - \delta - \frac{L(\delta)}{2}s^2\right\}= \sup_{\delta \geq 0} \sup_s \left\{ us - \delta - \frac{L(\delta)}{2}s^2\right\}= \sup_{\delta \geq 0} \left\{\frac{1}{2L(\delta)}u^2-\delta\right\}.
\end{align*}

Notice that both functions are even as they only depend on their input through a squared term, so $\theta_L(s) = \theta_L(-s)$ and $\theta_L^*(u)=\theta_L^*(-u)$.

As $L(\delta)$ is finite for $\delta > 0$, it holds that $\theta_L(0)=\inf_{\delta \geq 0} \delta = 0$. If $s \ne 0$, we can  show that $\theta_L(s) > 0$ by providing a uniform lower bound on $\phi_L(s,\delta) = \tfrac{L(\delta)}{2} s^2 + \delta$ for all $\delta \geq 0$. First, for $\delta \leq 1$, by monotonicity of $L(\cdot)$, we can bound $\phi_L(s,\delta) \geq \frac{L(1)}{2}s^2 > 0$.
Now suppose  $\delta > 1$. By concavity and monotonicity of $1/L(\cdot)$, there exists some $a \geq 0$ such that $L(\delta) \geq \frac{1}{\frac{1}{L(1)}+a(\delta-1)}$. By the arithmetic-geometric mean inequality, $$\frac{L(\delta)}{2}s^2+\delta \geq |s|\sqrt{\frac{2L(1)\delta}{1+aL(1)(\delta-1)}} \geq |s|\sqrt{\min\{2L(1), 2/a\}} > 0,$$
where the first bound substitutes the bound derived from the concavity of $1/L(\cdot)$ and the second bound considers whether $aL(1) \leq 1$ or $aL(1) > 1$. Therefore,
$$\theta_L(s) = \inf_{\delta\geq 0} \phi_L(s,\delta) \geq \min\left\{\frac{L(1)}{2}s^2, |s|\sqrt{2L(1)}, |s| \sqrt{2/a}\right\} > 0. $$

Lastly, since $\phi_L(s,\delta)$ is closed, convex, and proper, \cite[Theorem 3.101]{Hoheisel2019} ensures \begin{align*}\partial \theta_L(s) &= \{u : (u,0) \in \partial \phi_L(s,\delta_\star(s)), \ \delta_\star(s) \in \argmin_{\delta \geq 0} \phi_L(s, \delta)\}.\end{align*}
For $s \ne 0$, \cref{lemma:phi_calc} shows $\argmin_{\delta \geq 0} \phi_L(s,\delta) = \{\delta_\star(s)\}$ is unique and $L(\delta_\star(s)) < \infty$. Coupled with $\phi_L(\cdot,\delta_\star(s))$ being differentiable in $s$ at $(s, \delta_\star(s))$ since $L(\delta_\star(s)) < \infty$, it holds that $\partial \theta_L(s)$ is a singleton: non-empty by convexity of $\theta_L$, and every element of $\partial\phi_L(s, \delta_\star(s))$ has first coordinate equal to $L(\delta_\star(s))s$, as outlined above. 
\end{proof}


\section{Inexactly Smooth Performance Estimation}\label{sec:Interpolation}
This section states and proves our main results on the interpolation theory for $L(\cdot)$-inexactly smooth functions. 

\begin{definition}\label{def:L()-interpolation}
    Consider observations $\mathcal{H} = \{(x_i,f_i,g_i)\}_{i\in\Ical}$ with $x_i, g_i \in \R^d$ and $f_i \in \R$ for some index set $\Ical$. We say $\mathcal{H}$ is ${L(\cdot)}$-interpolable if there exists a convex, $L(\cdot)$-inexactly smooth function $f$ such that $f(x_i) = f_i$ and $g_i \in \partial f(x_i)$ for all $i \in \Ical$. 
\end{definition}

Our main goal is to provide necessary and sufficient conditions for interpolability. Our conditions below provide this, up to a small change in the function $L(\cdot)$ by an absolute constant. Let the ``self-smoothness constant'' $c_L$ of $\theta_L$ (previously defined in \eqref{def:theta}) be the smallest positive number for which $\theta_L$ is $c_LL(\cdot/c_L)$-inexactly smooth.
\begin{align}\label{primal-def-c_L-1D}
    c_L &= \inf \left\{c > 0: \theta_L \text{ is } cL(\cdot/c)\text{-inexactly smooth}\right\}
\end{align}
We can write this constant as $$c_L=\inf \left\{c > 0 : \forall r,s, g_s \in \partial \theta_L(s),\, \theta_L(r) \leq \theta_L(s)+g_s(r-s)+c\theta_L(r-s) \right\}$$
since $c \theta_L(s) = c \inf_{\delta \geq 0} \left\{\frac{L(\delta)}{2}s^2 + \delta\right\} = \inf_{\delta \geq 0} \left\{\frac{cL(\delta/c)}{2}s^2+\delta\right\}$.
The following theorem notes that this self-smoothness constant is well-defined and at most two.
\begin{theorem}\label{thm:finite-cL}
    If $L(\cdot)$ satisfies our assumptions, then $1 \leq c_L \leq 2$.
\end{theorem}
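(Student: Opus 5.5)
The plan is to prove the two inequalities separately, using the reformulation of $c_L$ stated just before the theorem. Throughout, write $\theta=\theta_L$ and set $t=r-s$.

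\textbf{Lower bound $c_L\ge 1$.} Since $\theta$ is even and convex (\cref{lemma:theta_L-and-conjugate_properties}), it is minimized at $0$, so $0\in\partial\theta(0)$. Take $s=0$ and $g_0=0$. The defining inequality becomes $\theta(r)\le c\,\theta(r)$. By \cref{lemma:theta_L-and-conjugate_properties}, $\theta(r)>0$ for every $r\neq0$, so this forces $c\ge1$.

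\textbf{Upper bound $c_L\le 2$: setup.} I will show that for all $r,s$ and $g_s\in\partial\theta(s)$,
\[ \theta(r)\le\theta(s)+g_s t+2\theta(t). \]
The case $s=0$ is the same computation as above, now with $c=2$. By evenness I may assume $s>0$. Then \cref{lemma:phi_calc} and \cref{lemma:theta_L-and-conjugate_properties} give a unique minimizer $\delta_\star=\delta_\star(s)$, a finite value $L_\star:=L(\delta_\star)$, and the singleton subdifferential $g_s=L_\star s$. Let $\delta_t$ be a (near-)minimizer for $\theta(t)$, with $L_t:=L(\delta_t)$, so that $2\theta(t)\approx L_t t^2+2\delta_t$. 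I split into two cases according to how $L_t$ compares with $L_\star/2$.

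\textbf{Case 1: $L_t\ge L_\star/2$.} Bound $\theta(r)$ by $\phi_L(r,\delta_\star)$ and expand $(s+t)^2$:
\[ \theta(r)\le \theta(s)+L_\star s t+\tfrac{L_\star}{2}t^2 . \]
In this case $\tfrac{L_\star}{2}t^2\le L_t t^2\le 2\theta(t)$, which gives the claim.

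\textbf{Case 2: $L_t< L_\star/2$.} Monotonicity of $L$ then forces $\delta_t>\delta_\star$. Bound $\theta(r)$ by $\phi_L(r,\delta_t)$ instead. Writing $a:=L_\star-L_t>0$, the target inequality reduces to
\[ a\big(\tfrac{s^2}{2}+st\big)+\tfrac{L_t}{2}t^2+\delta_\star+\delta_t\ \ge 0 . \]
The worst case over $s$ gives $a(\tfrac{s^2}{2}+st)\ge -\tfrac{a}{2}t^2$, so it suffices to control $a$. Here I use the structure of $L$. Optimality of $\delta_\star$ gives the stationarity condition $\tfrac{s^2}{2}\,\partial L(\delta_\star)\ni -1$. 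Concavity of $1/L$ at $\delta_\star$ then yields
\[ \frac{1}{L_t}-\frac{1}{L_\star}\le \frac{2(\delta_t-\delta_\star)}{s^2L_\star^2}, \qquad\text{that is,}\qquad a\le \frac{2L_t(\delta_t-\delta_\star)}{s^2L_\star}. \]
The idea is to feed this into the inequality without first minimizing over $s$. The bound on $a$ scales like $1/s^2$, which should kill the $s^2$ and $st$ dependence, leaving something of the form $\delta_t$ versus $\tfrac{L_t t^2}{2}$ that can be absorbed.

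\textbf{Main obstacle.} The hard part is Case 2. Simply taking the worst case in $s$ is too lossy, because $a$ may exceed $L_t$. The bookkeeping has to combine three ingredients at once: the stationarity of $\delta_\star$, the concavity of $1/L$ (equivalently, convexity of $s\Lleft(s)$ from \cref{lemma:equivalences-of-convexity-assumptions}), and the optimality of $\delta_t$ for $t$.

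If this direct route stalls, I would try a different choice of $\delta$ in Case 2, namely $\delta=\delta_\star+\delta_t$ in $\phi_L(r,\cdot)$. Together with the bound $L(\delta_\star+\delta_t)\le\min\{L_\star,L_t\}$ from monotonicity, this is the natural source of the constant $2$, since it pays $\delta_t$ at most twice. Boundary situations (infimum not attained, or $L_t=\infty$) should follow by taking limits using lower semicontinuity of $L$.
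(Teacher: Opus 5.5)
Your lower bound and Case 1 are correct, but Case 2 is not proved, and it is where all the content lies. The route you propose for it cannot work. The tangent-line bound $a\le 2L_t(\delta_t-\delta_\star)/(s^2L_\star)$ neutralizes the $\tfrac{a}{2}s^2$ term. However, the cross term $-as|t|$ is then only bounded by $2L_t(\delta_t-\delta_\star)|t|/(sL_\star)$, which carries an uncontrolled factor $|t|/s$.

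Concretely, take $L(\delta)=6\delta^{-1/3}$ (H\"older with $p=1/2$), so that $\delta_\star(s)=|s|^{3/2}$ and $L(\delta_\star(s))=6|s|^{-1/2}$. At $s=1$, $t=-16$ you are in Case 2, since $L_t=1.5<3=L_\star/2$ and $\delta_t=64$. The tangent bound gives $a\le 31.5$, whereas in fact $a=4.5$. Substituting $a=31.5$ into $a(\tfrac{s^2}{2}+st)+\tfrac{L_t}{2}t^2+\delta_\star+\delta_t$ yields $-231.25<0$. So no bookkeeping that uses $a$ only through this bound can close the case. For $L=\kappa/\delta^q$ with $q\in(0,1)$ the overshoot grows without bound as $|t|/s\to\infty$. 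Your fallback $\delta=\delta_\star+\delta_t$ leaves the same cross term $-as|t|$ uncontrolled, since monotonicity of $L$ says nothing about it.

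The missing ingredient is convexity of $\theta_L$ itself. For $t=-u$ with $u\ge s/2$, note that $\tfrac{L_t}{2}u^2+\delta_t=\theta_L(u)$. Apply the subgradient inequality at $s$, namely $\theta_L(u)\ge\theta_L(s)+L_\star s(u-s)$. Your slack then satisfies
\[ \tfrac{a}{2}s^2-asu+\theta_L(u)+\delta_\star\ \ge\ L_t s\big(u-\tfrac{s}{2}\big)+2\delta_\star\ \ge\ 0 . \]
For $t\ge0$ or $u<s/2$, every term is already nonnegative because $a>0$.

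With this step your direct function-value argument is complete, and it is a genuine alternative to the paper's proof. The paper instead reduces $c_L$ to the derivative ratio $\sup|g_r-g_s|/|g_{r-s}|$ by integrating along the segment. It then bounds that ratio by $2$, using monotonicity of $\partial\theta_L$ for $r\ge0\ge s$ and monotonicity of $s\mapsto L(\delta_\star(s))$ for $r>s>0$.

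Separately, $\partial\theta_L(0)$ need not be $\{0\}$; for $L(\delta)=\beta^2/(2\delta)$ one gets $\theta_L=\beta|\cdot|$. So your case $s=0$ must handle every $g_0\in\partial\theta_L(0)$, not just $g_0=0$. This follows because $-g_0\in\partial\theta_L(0)$ by evenness, which gives $\theta_L(r)+g_0r\ge0$.
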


Necessary conditions for interpolation follow from any necessary condition for inexactly smooth convex functions, specialized to the points $x_i$. For example, the inequality~\eqref{def: inexactly smooth} with $y=x_i$ and $x=x_j$ and the subgradient inequality $f_i \geq f_j + \langle g_j,x_i-x_j\rangle$ are necessary. In the setting of $L$-smooth convex functions, \cite[Corollary 1]{taylor2017smooth} showed that the cocoercive inequalities $f_i \geq f_j + \langle g_j,x_i-x_j\rangle + \frac{1}{2L}\|g_i-g_j\|^2$ are necessary and sufficient.
The following theorem shows that the generalization of this cocoercive-type condition~\cref{lemma:L(delta) conjugate equivalence} remains necessary and sufficient for inexactly smooth interpolation, up to the self-smoothness constant $c_L$. 
 
\begin{theorem} \label{thm:interpolation-general}
   Let $L(\cdot)$ satisfy our assumptions. If a set of observations $\mathcal{H} =\{(x_i,f_i,g_i)\}_{i\in\Ical}$ is $L(\cdot)$-interpolable, then for each $i,j\in\Ical$,
\begin{equation}\label{eq:interpolation condition}
        f_i - f_j - \langle g_j, x_i-x_j\rangle - \frac{1}{2L(\delta)}\|g_i-g_j\|^2 + \delta \geq 0, \quad \forall \delta \geq 0 . 
    \end{equation} 
    Conversely, if the above condition holds then  $\mathcal{H}$ is ${c_L L(\cdot/c_L)}$-interpolable.
\end{theorem}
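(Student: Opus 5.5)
Necessity is immediate. If $f$ interpolates $\mathcal{H}$ and is $L(\cdot)$-inexactly smooth, then \cref{lemma:L(delta) conjugate equivalence} (2 $\Rightarrow$ 1) gives the cocoercive-type inequality at every pair of points. Specializing it to $y=x_i$, $x=x_j$, $g_y=g_i$, $g_x=g_j$ yields \eqref{eq:interpolation condition}.

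For sufficiency I will work through the conjugate, following the construction of \cite{taylor2017smooth} for $L$-smooth interpolation.

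\textbf{Step 1 (reformulate).} Taking the supremum of \eqref{eq:interpolation condition} over $\delta\ge 0$, and using the formula for $\theta_L^*$ in \cref{lemma:theta_L-and-conjugate_properties} with $u=\|g_i-g_j\|$, the condition becomes
$$f_i \ge f_j + \inner{g_j}{x_i-x_j} + \theta_L^*(\|g_i-g_j\|)\qquad\forall i,j.$$
Set $f_i^* := \inner{g_i}{x_i}-f_i$ and swap the roles of points and gradients. The conditions then read
$$f_j^* \ge f_i^* + \inner{x_i}{g_j-g_i} + \theta_L^*(\|g_j-g_i\|).$$
This says exactly that the dual data $\{(g_i,f_i^*,x_i)\}$ satisfies an ``inexactly strongly convex'' interpolation inequality with modulus $\theta_L^*$.

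\textbf{Step 2 (construct the conjugate).} Define
$$h(g) := \max_i \left\{ f_i^* + \inner{x_i}{g-g_i} + \theta_L^*(\|g-g_i\|)\right\}.$$
Each piece is closed and convex, so $h$ is closed and convex. By the Step 1 inequalities, $h(g_j)=f_j^*$, attained by the $j$-th piece. Since $\theta_L^*\ge 0$ and $\theta_L^*(0)=0$, the $j$-th piece touches the affine function $f_j^*+\inner{x_j}{\cdot-g_j}$ at $g_j$ from above. Hence $x_j\in\partial h(g_j)$.

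Now set $f:=h^*$. Because $x_j\in\partial h(g_j)$, Fenchel–Young equality gives $f(x_j)=\inner{g_j}{x_j}-f_j^*=f_j$ and $g_j\in\partial f(x_j)$. So $f$ interpolates $\mathcal{H}$.

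\textbf{Step 3 (smoothness of $f$).} By \cref{lemma:L(delta) conjugate equivalence}, it suffices to show that $h$ is $1/(c_LL(\cdot/c_L))$-inexactly strongly convex. Take any $g$ and $x\in\partial h(g)$, with $g$ achieved by a maximizing piece $i$. The goal is
$$h(g')\ge h(g)+\inner{x}{g'-g}+\tfrac{1}{c_L}\,\theta^*_{L}\text{-type term}.$$

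I expect this step to be the main obstacle. Each individual piece has the required curvature about $g_i$, but not necessarily about an arbitrary point $g$, and a subgradient of the max need not come from a single piece. The self-smoothness constant is exactly the tool for this. By definition of $c_L$, $\theta_L$ satisfies the inexact smoothness inequality with $c_L\theta_L$ as its modulus. Conjugating via \cref{lemma:L(delta) conjugate equivalence} and part 1 of \cref{L(delta) scaling calculus}, $\theta_L^*$ then satisfies a strong convexity inequality with modulus $\frac{1}{c_L}\theta_L^*$ about every point, i.e.
$$\theta_L^*(\|b\|)\ge \theta_L^*(\|a\|)+\inner{v}{b-a}+\tfrac{1}{c_L}\theta_L^*(\|b-a\|)$$
for $v\in\partial\theta_L^*(\|\cdot\|)(a)$. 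Here I would pass from the scalar function to the norm by a radial argument in the spirit of \cref{composing with abs to norm}, applied to the conjugate side.

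With this in hand, the plan is as follows. First apply the inequality to the maximizing piece. Next handle general $x\in\partial h(g)$, which is a convex combination of the piece gradients $x_i+v_i$, by averaging the piecewise inequalities. Since $h(g')$ dominates each piece, the convex combination of the lower bounds is still a lower bound. Finally, rescale $\frac{1}{c_L}\theta_L^*(\|\cdot\|)=(c_L\theta_L(\cdot))^*$-style so that the resulting modulus matches $\sup_\delta\{\frac{1}{2c_LL(\delta/c_L)}u^2-\delta\}$. That is exactly $1/(c_LL(\cdot/c_L))$-inexact strong convexity, which completes the proof.
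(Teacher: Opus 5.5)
This is essentially the paper's proof. Your $h$ is exactly $\max_i r_i^*$ with $r_i=f_i+\langle g_i,\cdot-x_i\rangle+\theta_L(\|\cdot-x_i\|)$. Your interpolation check ($h(g_j)=f_j^*$, then $x_j\in\partial h(g_j)$, then Fenchel--Young) is the paper's. Your Step 3 (per-piece strong convexity, then averaging over active pieces) is an inlined proof of part 2 of \cref{lemma:L(delta) sums and max calculus}. The paper gets the per-piece bound more cheaply than your conjugate-side radial argument: it applies \cref{composing with abs to norm} to $\theta_L$ on the primal side, adds the affine term, and then uses \cref{lemma:L(delta) conjugate equivalence}, with $(\theta_L\circ\|\cdot\|)^*=\theta_L^*\circ\|\cdot\|$.

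One correction is needed in Step 3. The conjugate $(c_L\theta_L)^*$ equals $c_L\theta_L^*(\cdot/c_L)$, i.e.\ $u\mapsto\sup_{\delta\ge0}\{\frac{u^2}{2c_LL(\delta/c_L)}-\delta\}$. It is not $\frac{1}{c_L}\theta_L^*$; the two agree only when $\theta_L^*$ is quadratic.

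Your intermediate claim that $\theta_L^*$ is strongly convex with modulus $\frac{1}{c_L}\theta_L^*$ is in fact false in general. Take $(\beta,p)$-H\"older data with $p\in(0,1)$. Then $\theta_L^*\propto|u|^{(1+p)/p}$, so $\frac{1}{c_L}\theta_L^*=(c_L^p\theta_L)^*$. Here $c_L\ge(1+p)2^{-p}>1$ (evaluate the paper's formula for $c_L$ at $s=1/2$), so $c_L^p<c_L$. Your claim would then make $\theta_L$ $c_L^pL(\cdot/c_L^p)$-inexactly smooth via \cref{lemma:L(delta) conjugate equivalence}, contradicting the minimality of $c_L$.

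The fix is simply to carry the weaker modulus $c_L\theta_L^*(\cdot/c_L)$ that the conjugate equivalence actually provides. That is precisely the target you state at the end, so with this change the proof goes through.
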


Recall our motivating example of $(\beta,p)$-\Holder smooth functions always satisfy an inexactly smooth bound with $L(\delta) = \left(\frac{1-p}{1+p}\frac{1}{2\delta}\right)^\frac{1-p}{1+p}\beta^\frac{2}{1+p}$ by \cref{prop:Equiv Char of HS functions}. Hence, from \eqref{primal-def-c_L-1D}, $c_L$ is exactly the maximum of $\left|1-s\right|^{\left(p+1\right)}-s^{\left(p+1\right)}+\left(p+1\right)s^{p}$ for $s \geq 0$, which is uniformly upper bounded by $c_L \leq 2^{1-p} \leq 2$
where the first inequality above is a direct application of the scalar ratio bound \eqref{eq:c_L bound} derived in the proof of \cref{thm:finite-cL}.\footnote{For $(\beta, p)$-\Holder smooth functions, $\theta_L(s) = \frac{\beta}{p+1}|s|^{p+1}$ and the supremum of $2^{1-p}$ in \eqref{eq:c_L bound} is attained when $r = -s$.}

This is tight when $p=1$ but may be slack for $p<1$. Note that the previous example \eqref{ex:tightness-Holder-condition} established that any interpolation theorem based on inexact quadratic bounds will be slack by at least a factor of $\left(\frac{p+1}{2p}\right)^p$. Combined, these give the following interpolation conditions for \Holder smooth functions, where we take the infimum over $\delta \geq 0$ in \eqref{eq:interpolation condition}.  Note that the sufficiency side can be tightened with the exact value of $c_L$ instead of the bound $2^{1-p}$ given above.

\begin{corollary}\label{cor:Holder_smooth_interp}
    If a set of observations $\mathcal{H} =\{(x_i,f_i,g_i)\}_{i\in\Ical}$ is interpolable by some convex, $(\beta,p)$-\Holder smooth function for $p \in (0,1]$, then for each $i,j\in\Ical$,
    \begin{equation}\label{eq:interpolation condition Holder}
        f_i - f_j - \langle g_j, x_i-x_j\rangle - \beta^{-1/p}\frac{p}{p+1}\|g_i-g_j\|^{(p+1)/p} \geq 0 .
    \end{equation} 
    Conversely, if the above holds then  $\mathcal{H}$ is interpolable by a convex, $\left(2\left(\frac{p+1}{4p}\right)^{p}\beta,p\right)$-\Holder smooth function.
\end{corollary}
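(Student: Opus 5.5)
The plan is to derive both directions directly from \cref{thm:interpolation-general} and \cref{prop:Equiv Char of HS functions}, specialized to the H\"older choice $L(\delta) = \left(\frac{1-p}{1+p}\frac{1}{2\delta}\right)^{\frac{1-p}{1+p}}\beta^{\frac{2}{1+p}}$. The main computation is an explicit evaluation of $\theta_L$ and $\theta_L^*$ for this $L$.

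\emph{Necessity.} Suppose $\mathcal{H}$ is interpolated by a convex $(\beta,p)$-\Holder smooth $f$. By \cref{prop:Equiv Char of HS functions} ($1.\Rightarrow 2.$), $f$ is $L(\cdot)$-inexactly smooth, so $\mathcal{H}$ is $L(\cdot)$-interpolable. Then \cref{thm:interpolation-general} gives \eqref{eq:interpolation condition} for every $\delta\ge 0$. Taking the infimum over $\delta\ge0$ of $-\frac{1}{2L(\delta)}u^2+\delta$ with $u=\|g_i-g_j\|$ produces $-\theta_L^*(u)$, using the formula in \cref{lemma:theta_L-and-conjugate_properties}. It then remains to compute $\theta_L^*$.

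\emph{Computing $\theta_L$ and $\theta_L^*$.} I will minimize $\frac{L(\delta)}{2}s^2+\delta$ in $\delta$ by one-variable calculus. The exponent $\frac{1-p}{1+p}$ is designed so that the minimum is $\theta_L(s)=\frac{\beta}{p+1}|s|^{p+1}$, which matches the footnote. The conjugate of $\frac{\beta}{p+1}|s|^{p+1}$ is the standard power conjugate $\beta^{-1/p}\frac{p}{p+1}|u|^{(p+1)/p}$ (valid for $p>0$). Substituting this gives exactly \eqref{eq:interpolation condition Holder}.

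\emph{Sufficiency.} The condition \eqref{eq:interpolation condition Holder} equals $\inf_{\delta\ge0}$ of the left side of \eqref{eq:interpolation condition}, so \eqref{eq:interpolation condition} holds for all $\delta$. By \cref{thm:interpolation-general}, $\mathcal{H}$ is then $c_LL(\cdot/c_L)$-interpolable.

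For any $c>0$, a direct scaling computation gives
$$cL(\delta/c)=c^{1+\frac{1-p}{1+p}}\left(\tfrac{1-p}{1+p}\tfrac{1}{2\delta}\right)^{\frac{1-p}{1+p}}\beta^{\frac{2}{1+p}}.$$
This is the H\"older inexact-smoothness function with $\beta$ replaced by $c\beta$, since $c^{2/(1+p)}=(c)^{1+\frac{1-p}{1+p}}$. This expression is increasing in $c$. Combined with the bound $c_L\le 2^{1-p}$ noted before the corollary (from \eqref{eq:c_L bound}), the interpolating function is also $L'(\cdot)$-inexactly smooth with $L'$ the H\"older function for $\beta'=2^{1-p}\beta$.

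Finally, \cref{prop:Equiv Char of HS functions} ($2.\Rightarrow 4.$) upgrades this to $\left(2^{1-p}\left(\frac{p+1}{2p}\right)^p\beta,\,p\right)$-\Holder smoothness. Since $2^{1-p}\left(\frac{p+1}{2p}\right)^p = 2\left(\frac{p+1}{4p}\right)^p$, this is the claimed constant.

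\emph{Main obstacle.} The only delicate step is checking that the closed forms are consistent. That is, the $\delta$-minimization must really give $\frac{\beta}{p+1}|s|^{p+1}$ with no stray constant, and the $c$-scaling must map exactly to $\beta\mapsto c\beta$. Both reduce to routine exponent bookkeeping, which I would verify by substituting the optimal $\delta_\star(s)\propto \beta|s|^{p+1}$.
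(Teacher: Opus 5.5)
Your proposal is correct and follows essentially the same route as the paper. You specialize \cref{thm:interpolation-general} to the H\"older choice of $L(\cdot)$ and take the infimum over $\delta$, computing $\theta_L^*(u)=\beta^{-1/p}\frac{p}{p+1}|u|^{(p+1)/p}$ from $\theta_L(s)=\frac{\beta}{p+1}|s|^{p+1}$. You then combine $c_L\le 2^{1-p}$ with the fact that $cL(\cdot/c)$ is the H\"older function with $\beta\mapsto c\beta$, and convert back via \cref{prop:Equiv Char of HS functions} ($2.\Rightarrow 4.$), using $2^{1-p}\left(\frac{p+1}{2p}\right)^p=2\left(\frac{p+1}{4p}\right)^p$.

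The one step you take on trust, $c_L\le 2^{1-p}$, is easily checked from the ratio in \eqref{eq:c_L bound}. For $r,s$ of opposite signs, $(|r|^p+|s|^p)/(|r|+|s|)^p\le 2^{1-p}$ by concavity of $t\mapsto t^p$. For equal signs the ratio is at most $1$ by subadditivity of $t\mapsto t^p$.
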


Considering the limiting case as $p \to 0$, the condition above becomes
$$f_i-f_j-\inner{g_j}{x_i-x_j}-\iota_{[0,\beta]}(\|g_i-g_j\|) \geq 0$$ where $\iota_{[0,\beta]}$ is the indicator for the interval $[0,\beta]$. Hence the condition requires $f_i \geq f_j + \inner{g_j}{x_i-x_j}$ and $\|g_i-g_j\| \leq \beta$. However, in this $p=0$ special case, despite having $c_L = 2$, these are necessary and sufficient for $(\beta,0)$-interpolation.
\begin{theorem}\label{thm:interpolation-p0}
    Consider any set of observations $\mathcal{H}=\{(x_i, f_i, g_i)\}_{i \in \Ical}$. Then $\mathcal{H}$ is interpolable by a $(\beta,0)$-\Holder smooth, convex function if and only if $\mathcal{H}$ is $L(\delta) = \frac{\beta^2}{2\delta}$-interpolable if and only if for all $i, j \in \Ical$, $f_i \geq f_j + \inner{g_j}{x_i-x_j}$ and $\|g_i-g_j\| \leq \beta$.
\end{theorem}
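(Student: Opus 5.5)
The plan is to prove the three equivalences as a cycle: $(\beta,0)$-H\"older interpolable $\Rightarrow$ $\frac{\beta^2}{2\delta}$-interpolable $\Rightarrow$ the two pairwise conditions $\Rightarrow$ $(\beta,0)$-H\"older interpolable. The third implication is the only one needing a construction. It is also the reason the factor $c_L=2$ from \cref{thm:interpolation-general} can be dropped when $p=0$.

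\emph{First implication.} If $f$ is a convex $(\beta,0)$-H\"older smooth interpolant, \cref{prop:Equiv Char of HS functions} (implication $1.\Rightarrow 2.$ with $p=0$, so that $L(\delta)=\frac{\beta^2}{2\delta}$) shows that $f$ is $L(\cdot)$-inexactly smooth. Hence the same $f$ certifies $L(\cdot)$-interpolability.

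\emph{Second implication.} By the necessity part of \cref{thm:interpolation-general}, every pair $i,j\in\Ical$ satisfies \eqref{eq:interpolation condition}. With $1/(2L(\delta))=\delta/\beta^2$ this reads
\[
f_i - f_j - \inner{g_j}{x_i-x_j} \;\geq\; \delta\left(\frac{\|g_i-g_j\|^2}{\beta^2}-1\right) \qquad \forall \delta\geq 0 .
\]
Taking $\delta=0$ gives $f_i\geq f_j+\inner{g_j}{x_i-x_j}$. The left-hand side is a fixed finite number, so letting $\delta\to\infty$ forces $\|g_i-g_j\|^2/\beta^2-1\leq 0$, that is, $\|g_i-g_j\|\leq\beta$.

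\emph{Third implication (construction).} Given the two conditions, take the max-of-affine interpolant
\[
f(x):=\max_{i\in\Ical}\{f_i+\inner{g_i}{x-x_i}\}.
\]
If $\Ical$ is infinite, use the supremum; it is finite everywhere because every $g_i$ lies in a ball of radius $\beta$ around any fixed $g_{j}$, and the sup-of-affine bound gives $f(x)\le f_j+\inner{g_j}{x-x_j}+\beta\|x-x_j\|$ by applying the first condition at $x_j$. This $f$ is convex and finite on $\R^d$. The first condition gives $f(x_i)=f_i$, since the $i$-th piece is active at $x_i$ and dominates the others there. Because piece $i$ is an affine minorant touching $f$ at $x_i$, we also get $g_i\in\partial f(x_i)$.

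It remains to check $(\beta,0)$-H\"older smoothness, i.e.\ $\|u-v\|\leq\beta$ for all $u\in\partial f(x)$ and $v\in\partial f(y)$. Every subgradient of a pointwise max (or sup) of affine functions lies in the closed convex hull $C:=\overline{\mathrm{conv}}\{g_i\}$. For finitely many pieces this is the standard max rule; in general it follows because $f^*$ is the closed convex hull of the function equal to $\inner{g_i}{x_i}-f_i$ at $g_i$, so $\dom{f^*}\subseteq C$. The second condition says the set $\{g_i\}$ has diameter at most $\beta$.

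The main (mild) obstacle is showing that the convex hull has the same diameter. For fixed $v$, the map $u\mapsto\|u-v\|$ is convex, so its supremum over $\mathrm{conv}\{g_i\}$ equals its supremum over the generators $g_i$. Applying this twice, first in $u$ and then in $v$, yields $\mathrm{diam}(C)=\mathrm{diam}\{g_i\}\leq\beta$, with closure preserving the bound. Hence $\|u-v\|\leq\beta=\beta\|x-y\|^0$, using the convention $0^0=1$ when $x=y$. This closes the cycle and proves the stated equivalences.
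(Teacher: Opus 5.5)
Your proof is correct and follows essentially the same route as the paper. Necessity comes from \cref{thm:interpolation-general} specialized to $L(\delta)=\beta^2/(2\delta)$, reading off the two conditions at $\delta=0$ and $\delta\to\infty$; sufficiency comes from the max-of-affine interpolant, whose subgradients lie in the convex hull of the $g_i$, a set of diameter at most $\beta$. Your cyclic arrangement needs only the implication $1.\Rightarrow 2.$ of \cref{prop:Equiv Char of HS functions} rather than the full equivalence at $p=0$, and your handling of infinite index sets is extra care the paper omits.
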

Since $p=0$ admits a sharper exact interpolation theorem, further improvements for $p \in (0,1)$ may be possible. However, the previous example in~\eqref{ex:tightness-Holder-condition} establishes that a gap of at least $\left(\frac{p+1}{2p}\right)^p >1$ is necessary for any approach for \Holder smoothness with $p \in (0,1)$ based on inexact smoothness. As a result, Corollary~\ref{cor:Holder_smooth_interp}'s condition is only necessary and sufficient at $p=1$ and $p=0$.

\subsection{Proof of Interpolation Results} \label{subsec:ProofOfInterpolation}

In this section, we prove the above interpolation theory, utilizing the calculus results built in Section~\ref{subsec:calculus}. The primary work in proving \cref{thm:interpolation-general} and \cref{thm:interpolation-p0} is a construction of the necessary interpolating function given the claimed interpolation conditions.

\begin{proof}[{\bf Proof of Theorem~\ref{thm:finite-cL}}] 
The lower bound $c_L \geq 1$ is immediate. Set $s = 0$, and recall that $\theta_L(0) = 0$ with $0 \in \partial \theta_L(0)$ by \cref{lemma:theta_L-and-conjugate_properties}. Then $c \geq 1$ since $\eqref{primal-def-c_L-1D}$ requires $$\theta_L(r) \leq c \theta_L(r), \quad \forall r.$$
The rest of the proof proceeds in three steps. We first consider the properties
\begin{enumerate}
        \item $\|g_x-g_y\| \leq \|s_{xy}\|, \quad \forall g_x \in \partial f(x), g_y \in \partial f(y), \ \text{and} \ s_{xy}  \in \partial (\theta_L \circ \|\cdot\|)(x-y)$,
       \item $f(y) \leq f(x) + \inner{g_x}{y-x}+\theta_L(\|y-x\|), \quad \forall x,y \text{ with } g_x \in \partial f(x)$,
       \end{enumerate}
and demonstrate that the first implies the second. Second, we specialize to $f = \theta_L$ and bound the self-smoothness constant in \eqref{primal-def-c_L-1D} with the scalar ratio below
\begin{equation}\label{eq:c_L bound}c_L \leq \sup_{r \ne s} \left\{\frac{|g_r-g_s|}{|g_{rs}|} : g_r \in \partial \theta_L(r), g_s \in \partial \theta_L(s), g_{rs} \in \partial \theta_L(r-s)\right\}.\end{equation}
Third, we complete the proof by establishing a universal bound of $2$ on this quantity.

We first show ($1. \Longrightarrow 2.)$. Fix $x,y$ and arbitrary $g_x \in \partial f(x)$. Letting $g_t \in \partial f(x+t(y-x))$ and $s_{t} \in \partial (\theta_L \circ \|\cdot\|)(t(y-x))$ be arbitrary for any $ t\in [0,1]$, note
\begin{align}\label{application_nonsmooth_FTC}
\begin{split}
    f(y)-f(x)-\inner{g_x}{y-x}&=\int_0^1 \inner{g_t-g_x}{y-x}dt\\
    &\leq \int_0^1 \|g_t-g_x\|\|y-x\|dt\\
    &\leq \int_0^1 \|s_{t}\|\|y-x\|dt = \theta_L(\|y-x\|),
    \end{split}
\end{align}
where the nonsmooth fundamental theorem of calculus~\cite[Theorem 2.3.4]{fundamental_convex} implies the first equality, Cauchy-Schwarz implies the first inequality, and the hypothesis implies the second inequality. The last equality follows from two nonsmooth calculus rules. First, the nonsmooth chain rule~\cite[Corollary 16.72]{bauschke} applied to $(\theta_L \circ \|\cdot\|)$ gives $\|s_{t}\|\|y-x\| = \langle s_{t}, y-x\rangle$. Second, we consider the fundamental theorem of calculus again applied to $\theta_L \circ \|\cdot\|$ along the segment from $0$ to $y-x$. Rearranging yields $f(y) \leq f(x) + \inner{g_x}{y-x}+\theta_L(\|y-x\|)$.

We now construct the bound as in \eqref{eq:c_L bound}. By the definition of $c_L$ in \eqref{primal-def-c_L-1D}, for all $r, s \in \R$ with $g_s \in \partial \theta_L(s)$ the following bound holds $$\theta_L(r) \leq \theta_L(s)+g_s\cdot(r-s)+c_L \theta_L(r-s).$$

Suppose the following bound holds for some fixed $c > 0$, $$|g_r-g_s| \leq c|g_{rs}|, \quad \forall r, s \in \R, \  g_r \in \partial \theta_L(r), \ g_s \in \partial \theta_L(s), \ g_{rs} \in \partial \theta_L(r-s). $$ By applying $(1. \Longrightarrow 2.)$ as above, it holds that $$\theta_L(r) \leq  \theta_L(s) + g_s\cdot(r-s) + c\theta_L(r-s).$$ For $r = s$, the bound follows as $\theta_L(0) = 0$. So we can bound $c_L$ overall as in \eqref{eq:c_L bound}.

Finally, we bound this value by $2$. Without loss of generality, suppose $r > s$. Recall that $\theta_L$ is convex, even, and minimized at $0$. Respectively, the subdifferential $\partial\theta_L$ is monotone,  $\partial\theta_L(s) = -\partial\theta_L(-s)$, and $0 \in \partial\theta_L(0)$.

First consider $r \geq 0 \geq s$. Here $|r-s| = r + |s|$, so $g_{rs} \in \partial\theta_L(r+|s|)$. Monotonicity of the subdifferential with evenness of $\theta_L$ gives $|g_r| \leq |g_{rs}|$ and $|g_s| \leq |g_{rs}|$. Since $\theta_L$ minimizes and vanishes at $0$ (see \cref{lemma:theta_L-and-conjugate_properties}), $g_{rs} \geq 0$ as well. Hence,
$$|g_r - g_s| \leq |g_r| + |g_s| \leq 2 g_{rs}.$$

Now consider $r > s > 0$. Recall $\phi_L(s,\delta) = \frac{L(\delta)}{2}s^2 + \delta$ and denote $\delta_\star(s) = \argmin_{\delta \geq 0} \phi_L(s,\delta)$. Note that by \cref{lemma:phi_calc}, both $\partial \theta_L(s)$ and $\partial \theta_L(r)$ are singletons:
$$\partial \theta_L(s) = \{sL(\delta_\star(s))\}, \quad \partial \theta_L(r) = \{rL(\delta_\star(r))\}.$$
Adding the two inequalities $\phi_L(s,\delta_\star(s)) \leq \phi_L(s,\delta_\star(r))$ and $\phi_L(r,\delta_\star(r)) \leq \phi_L(r,\delta_\star(s))$ and simplifying terms that cancel, 
$$(r^2-s^2)(L(\delta_\star(r))-L(\delta_\star(s))) \leq 0 \Longrightarrow L(\delta_\star(r)) \leq L(\delta_\star(s)).$$
Therefore
\begin{align*}
    0 \leq g_r - g_s &= (r-s)L(\delta_\star(r))+s(L(\delta_\star(r))-L(\delta_\star(s)))\\
    &\leq (r-s)L(\delta_\star(r-s))=g_{rs},
\end{align*}
using monotonicity of the subgradient in the first inequality and monotonicity of $L(\delta_\star(\cdot))$ in the second.
The remaining case $0 > r > s$ follows by applying the above to $(-s,-r)$ and invoking evenness. With all cases exhausted, we may bound \eqref{eq:c_L bound} by $2$.
\end{proof}

\begin{proof}[{\bf Proof of \cref{thm:interpolation-general}}]
Suppose $f(x)$ is an $L(\cdot)$-inexactly smooth function that interpolates the observations $\mathcal{H}$. By the cocoercive-type condition 1.~from \cref{lemma:L(delta) conjugate equivalence} with $x = x_j$ and $y = x_i$, the claimed conditions hold for each $i, j \in \Ical$. 

For the reverse direction, define the function $r = (\max_{i\in \Ical} r_i^*)^*$ where
\begin{align*}
     r_i(x) &= f_i + \inner{g_i}{x-x_i}+\theta_L(\|x-x_i\|).
 \end{align*} 
 Calculating the conjugates of each $r_i$, one has $r_i^*(g) = -f_i + \inner{x_i}{g}+\theta_L^*(\|g-g_i\|) $.
 
 This claimed interpolation $r$ is convex as all conjugates are convex. Our calculus rules allow us to verify its inexact smoothness. Each individual $r_i$ is $c_L L(\cdot/c_L)$-inexactly smooth by the sum rule in Lemma~\ref{lemma:L(delta) sums and max calculus} since the linear term $f_i + \inner{g_i}{x-x_i}$ does not affect the smoothness and $\theta_L(\|x-x_i\|)$ is $c_L L(\cdot/c_L)$-inexactly smooth by Lemma~\ref{composing with abs to norm}. Then the $c_L L(\cdot/c_L)$-inexact smoothness of $r$ follows from the conjugate maximum formula in Lemma~\ref{lemma:L(delta) sums and max calculus}.

 All that remains is to verify that $r$ interpolates the given first-order information. Note that $r(x_j) = f_j$ and $g_j\in\partial r(x_j)$ if and only if $\max_{i\in \Ical} r_i^*(g_j) = -f_j+\langle x_j,g_j\rangle$ and $x_j \in \partial (\max_{i\in \Ical} r_i^*)(g_j)$. The key step in establishing this is showing that the interpolation conditions guarantee that $r_j^*$ attains this maximum at $g_j$. To see this, observe that for any $i\in\Ical$,
 \begin{align*}
     r_j^*(g_j) &= -f_j + \inner{x_j}{g_j}
     \geq -f_i + \inner{x_i}{g_j}+\theta_L^*(\|g_i-g_j\|)
     =r_i^*(g_j),
 \end{align*}
 where the inequality is our interpolation condition between $i$ and $j$ (taking the supremum over $\delta \geq 0$).
 From this, the fact that $r$ interpolates follows since $r^*(g_j)=-f_j+\langle x_j,g_j\rangle$ and $x_j \in \partial r_j^*(g_j)\subseteq \partial (\max_{i\in \Ical} r_i^*)(g_j)$.
\end{proof}

\begin{proof}[{\bf Proof of Theorem~\ref{thm:interpolation-p0}}]

Recall by \cref{prop:Equiv Char of HS functions} that a function $f$ is $(\beta, 0)$-\Holder smooth if and only if it is $L(\delta) = \frac{\beta^2}{2\delta}$-inexactly smooth. Therefore, the $(\beta, 0)$-\Holder interpolation of $\mathcal{H}$ is equivalent to $L(\delta) = \frac{\beta^2}{2\delta}$ interpolation.

Supposing there exists an $L(\delta) = \frac{\beta^2}{2\delta}$-inexactly smooth interpolating function, by \cref{thm:interpolation-general}, it must hold for all $i, j \in \Ical$ and $\delta \geq 0$ that $f_i -f_j - \inner{g_j}{x_i-x_j}-\frac{\delta}{\beta^2}\|g_i-g_j\|^2 +\delta \geq 0.$
Taking the infimum over $\delta \geq 0$, observe that the interpolation condition above is then equivalent to $f_i \geq f_j + \inner{g_j}{x_i-x_j}$ and $\|g_i-g_j\| \leq \beta.$

Conversely, consider the convex function $h(x) = \max \{f_i + \inner{g_i}{x-x_i}\}$. Since $\partial h \subseteq \conv{g_i}$, for any $g, g' \in \partial h$ with $g = \sum \alpha_i g_i$ and $g' = \sum \gamma_j g_j$, $(\beta,0)$-\Holder smooth follows as 
$\|g-g'\| \leq \beta$ by convexity of the norm. Finally, since $f_i \geq f_j + \inner{g_j}{x_i-x_j}$, the $i$th component of $h$ is active at $x_i$, and therefore $h(x_i) = f_i$ and $g_i \in \partial h(x_i)$. Hence, $h$ interpolates the observations. 
\end{proof}

\subsection{Inexactly Smooth Convex PEPs} \label{subsec:PEPs}
Our interpolation theorems enable PEPs~\cite{drori2014performance,taylor2017smooth,taylor2017composite} for inexactly smooth problems. For a numerical PEP study in the case of gradient descent on convex $(\beta,p)$-\Holder smooth functions, see~\cite[Section 4.4]{kamri2025}. Our theorems provide a structured way to analyze $N$-step fixed-step first-order methods (FSFOM). These methods, parameterized by a lower triangular $W\in\mathbb{R}^{N \times N}$, iterate
\begin{equation*}
    x_n = x_0 - \sum_{i=0}^{n-1} W_{n, i} g_i
\end{equation*}
for $n=1,\dots,N$, where $g_i\in\partial f(x_i)$ are subgradients computed at each iteration.


Consider minimizing an $L(\cdot)$-inexactly smooth, convex function $f$ with a minimizer $x_\star$ satisfying $\|x_0-x_\star\|\leq D$. The Performance Estimation Problem for some $W$ finds the worst final objective $f(x_N)-f(x_\star)$ over all possible problem instances. Formally,
\begin{equation}\label{eq:inexact-PEP}
     \pPEP{true} = \begin{cases}\displaystyle\max_{f, \, x_0 \in \R^d } & f(x_N)-f(x_\star)  \\
    \text{s.t.} & x_n = x_{0} - \sum_{i=0}^{n-1} W_{n,i} g_i \quad \forall n = 1, 2, \dots, N\\
    & \|x_0-x_\star\|^2 \leq D^2 \\
    & f \text{ is $L(\cdot)$-inexactly smooth, convex, and minimized at $x_\star$}. \\
    \end{cases}
  \end{equation}

The key observation to tractably solve PEPs is that the algorithm's trajectory and performance depend only on the first-order information at the iterates $x_0,\dots,x_N$ and at the minimizer $x_\star$. For ease in referring to such points, we consider the index set $ \Ical_N^\star = \{0, 1, \dots, N, \star\}$. The relevant quantities to algorithmic performance are then
\begin{equation*}
    x_i, \qquad f_i = f(x_i), \quad g_i\in\partial f(x_i) \qquad \forall i \in \Ical_N^\star
\end{equation*}
where we require $g_\star=0$ at the minimizer $x_\star$. From Theorem~\ref{thm:interpolation-general}, we know that the following must be nonnegative for all $i,j\in \Ical_N^\star$ and all $\delta_{i,j}\geq 0$
\begin{align*}
    \mathcal{D} &= D^2 - \|x_0-x_\star\|^2 \geq 0\\
    \mathcal{Q}_{i,j,\delta_{i,j}} &= f_i - f_j - \langle g_j, x_i-x_j\rangle - \frac{1}{2L(\delta_{i,j})}\|g_i-g_j\|^2 + \delta_{i,j} \geq 0.
\end{align*}
Note this can be reduced to a finite set of inequalities by taking a supremum over $\delta_{i,j}$, resulting in a compressed nonnegative condition $\mathcal{Q}_{i,j} =  f_i - f_j - \langle g_j, x_i-x_j\rangle - \theta_L^*(\|g_i-g_j\|)\geq 0$.
Using the sufficiency side of Theorem~\ref{thm:interpolation-general}, we know an $L(\delta)$ inexactly smooth instance exists agreeing with the observed first-order information whenever one has $\mathcal{D}\geq 0$ and for all $i,j\in \Ical_N^\star$ and all $\delta_{i,j}\geq 0$
\begin{align*}
    \mathcal{Q}^{c_L}_{i,j,\delta_{i,j}} &= f_i - f_j - \langle g_j, x_i-x_j\rangle - \frac{c_L}{2L(c_L\delta_{i,j})}\|g_i-g_j\|^2 + \delta_{i,j}\geq 0.
\end{align*}
Again, define a compressed notation $\mathcal{Q}^{c_L}_{i,j} =  f_i - f_j -\langle g_j, x_i-x_j\rangle - \theta_{L(c_L\cdot)/c_L}^*(\|g_i-g_j\|)$.

From these necessary and sufficient conditions, we define a closely related optimization problem over finitely many variables. An upper bound on $\pPEP{true}$ is 
\begin{equation}\label{eq:Inexact-Primal-infinite-interpolation}
    \pPEP{interp} = \begin{cases}\displaystyle\max_{x,f,g \in (\R^d \times \R \times \R^d)^{N+2}} & f_N-f_\star  \\
    \mathrm{s.t.} & x_n = x_{0} - \sum_{i=0}^{n-1} W_{n,i} g_i \quad \forall n = 1, 2, \dots, N\\
    & g_\star = 0\\
    & \mathcal{D} \geq 0 \\
    & \mathcal{Q}_{i,j} \geq 0 \qquad  \forall i, j \in \Ical_N^\star.
    \end{cases}
\end{equation}
Denote the lower bounding problem with stricter $\mathcal{Q}_{i,j}^{c_L}\geq 0$ constraints by $\pPEP{interp}^{c_L}$.

The standard approach to further simplifying such PEP reformulations is to consider a Gram change of variables. Define $P = [x_0-x_\star \ | \ g_0 \ |\  g_1 \ |\  \dots\ | \ g_N]$ and the Gram matrix of all inner products between these vectors as $G=P^TP$. After substituting the equality definitions of  $x_n = x_{0} - \sum_{i=0}^{n-1} W_{n,i} g_i$ and $g_\star=0$ into the remaining inequality constraints, observe that $\mathcal{D}$ and $\mathcal{Q}_{i,j}$ are linear in $F=[f_\star,f_0,\dots, f_N]^T$ and concave in $G$ (to see this, note $\theta_L^*\circ\sqrt{\cdot}$ is convex since it takes the form of a supremum of affine functions). We denote these concave functions by $\mathcal{D}(F,G)$ and $\mathcal{Q}_{i,j}(F,G)$. Changing over to these as the variables makes all of the above constraints convex. Additionally, as a Gram matrix, we require $G$ to be positive semidefinite.  We denote this by
\begin{equation}\label{eq:PEP_gram}
    \pPEP{interp} \leq \pPEP{gram} = \begin{cases}\displaystyle\max_{F,G} & f_N-f_\star  \\
    \mathrm{s.t.} & \mathcal{D}(F,G) \geq 0 \\
    & \mathcal{Q}_{i,j}(F,G) \geq 0 \qquad  \forall i, j \in \Ical_N^\star\\
    & G\succeq 0.
    \end{cases}
\end{equation}
Provided $d\geq N+2$, one can factor $G$ to recover $P$, making this reformulation exact.

The above PEP formulation~\eqref{eq:PEP_gram} is a convex problem in $F,G$. The Lagrange dual problem certifies upper bounds on $f_N-f_\star$ via nonnegative weights $\nu,\lambda_{i,j}$ for the inequality constraints and a positive semidefinite $Z$. We denote the dual by $\dPEP{gram}$. The following theorem relates all of the defined PEP problems, establishing, in particular, that strong duality holds above under a mild regularity condition on $W$.

\begin{theorem}\label{thm:strong duality}
    Let $L(\cdot)$ satisfy our assumptions and suppose $d \geq N+2$. Given $W_{i,i-1} \ne 0$ for all $i = 1, \dots,  N$, strong duality holds, $\pPEP{gram}=\dPEP{gram}$. Moreover, $$\pPEP{gram}^{c_L} = \dPEP{gram}^{c_L} = \pPEP{interp}^{c_L} \leq \pPEP{true} \leq \pPEP{interp} = \pPEP{gram}= \dPEP{gram} .$$
\end{theorem}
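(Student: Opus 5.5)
The argument has two parts: the chain of inequalities, which follows from the interpolation theory, and strong duality, which follows from a Slater point for the convex Gram formulation. I would do the chain first, since it is mostly bookkeeping.

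\textbf{The chain of inequalities.}
\begin{itemize}
\item $\pPEP{true} \leq \pPEP{interp}$: take any feasible instance $(f,x_0)$ of \eqref{eq:inexact-PEP}. Its first-order data at $\Ical_N^\star$ satisfies $\mathcal{Q}_{i,j}\ge 0$ by the necessity direction of \cref{thm:interpolation-general}. It also satisfies $g_\star=0$, $\mathcal{D}\ge0$ and the iteration equalities, so it is feasible for \eqref{eq:Inexact-Primal-infinite-interpolation} with the same objective.
\item $\pPEP{interp}^{c_L} \leq \pPEP{true}$: given data feasible for the $c_L$-tightened problem, the sufficiency direction produces an $L(\cdot)$-inexactly smooth convex interpolant. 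Here I apply \cref{thm:interpolation-general} to $\tilde L := L(c_L\cdot)/c_L$, noting $c_{\tilde L}\tilde L(\cdot/c_{\tilde L})=L$, which uses that $c_L$ is invariant under this rescaling; \cref{thm:finite-cL} guarantees $c_L$ is finite. That interpolant has $g_\star=0\in\partial f(x_\star)$, so $x_\star$ is a minimizer, and it is feasible in \eqref{eq:inexact-PEP}.
\item $\pPEP{interp}=\pPEP{gram}$ (and likewise with $c_L$): every quantity depends on $P$ only through $G=P^TP$ and $F$. Since $d\ge N+2$, any $G\succeq0$ factors as $P^TP$ with $P\in\R^{d\times(N+2)}$, so the Gram problem is an exact reformulation.
\end{itemize}
Once strong duality is shown below, the remaining equalities $\pPEP{gram}=\dPEP{gram}$ and $\pPEP{gram}^{c_L}=\dPEP{gram}^{c_L}$ close the chain.

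\textbf{Strong duality.} Problem \eqref{eq:PEP_gram} maximizes a linear objective subject to concave constraints $\mathcal{D}(F,G)\ge0$, $\mathcal{Q}_{i,j}(F,G)\ge0$ and the cone constraint $G\succeq0$. The $\mathcal{Q}_{i,j}$ may be extended-valued, since $\theta_L^*$ can be $+\infty$, as in the $p=0$ case. It is therefore enough to exhibit a Slater point: $G\succ0$, $\mathcal{D}>0$, and $\mathcal{Q}_{i,j}>0$ for all $i\neq j$, with $(F,G)$ in the interior of the domains of the $\mathcal{Q}_{i,j}$. The diagonal constraints $\mathcal{Q}_{i,i}\equiv 0$ are trivial and can be dropped. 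Convex conic duality, for example Rockafellar's Lagrangian duality with a generalized Slater condition, then gives a zero gap, with dual attainment whenever the primal value is finite. The identical argument applies to the $c_L$ version.

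\textbf{Main obstacle: the Slater point.} I would build it in three steps.
\begin{enumerate}
\item Take $x_0-x_\star$ and the $g_i$ to be small multiples $\epsilon$ of orthonormal vectors in $\R^{N+2}$. Then $P$ has full column rank, so $G\succ0$, and $\|x_0-x_\star\|<D$ gives $\mathcal{D}>0$.
\item The hypothesis $W_{i,i-1}\neq0$ enters here. The matrix mapping $(x_0-x_\star,g_0,\dots,g_N)$ to $(x_0-x_\star,\dots,x_N-x_\star,g_0,\dots,g_N)$ is triangular. Nondegeneracy prevents iterates from coinciding, although it is mainly the positive-definiteness of $G$ in the original variables that matters.
\item Choose function values $f_i$ strictly increasing in a way that dominates the linear and $\theta_L^*$ terms. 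Set $f_\star=0$ and $f_i = K$ for $i\neq\star$ with $K$ large. Then $\mathcal{Q}_{i,\star}=K-\theta_L^*(\|g_i\|)>0$, using that $\theta_L^*$ is finite near $0$; for the Hölder case $\theta_L^*$ is finite on $[0,\beta)$, which is why $\epsilon$ is taken small. Next, $\mathcal{Q}_{\star,j}=-K+\langle g_j,x_j-x_\star\rangle-\theta_L^*(\|g_j\|)$ is negative, so equal values fail there. Instead I would take $f_i=f_\star+\langle g_i,x_i-x_\star\rangle-\eta$, which requires $\langle g_i,x_i-x_\star\rangle$ large relative to $\theta_L^*$.
\end{enumerate}
The cleaner route for the third step is to start from a strictly convex smooth quadratic $f(x)=\frac{\mu}{2}\|x-x_\star\|^2$ with tiny $\mu$. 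It is $L$-smooth with a smaller curvature $\mu<1/\theta$-type bound, so its interpolation inequalities hold strictly for $i\ne j$, since $\theta_L^*(u)\le u^2/(2\mu')$ with margin. I then perturb the gradients slightly to obtain affinely independent vectors, keeping all strict inequalities by continuity and finiteness of $\theta_L^*$ near $0$.

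Ensuring that the resulting $G$ is positive definite requires that $x_0-x_\star,g_0,\dots,g_N$ be linearly independent. Along a quadratic trajectory the $g_i$ are linear in $x_0-x_\star$, so independence must come from the perturbation. This is where I expect difficulty. A perturbation of gradients changes the iterates through $W$, and the conditions $W_{i,i-1}\ne0$ are what make the perturbed Gram matrix attainable with full rank while all $\mathcal{Q}_{i,j}$ stay strictly positive.
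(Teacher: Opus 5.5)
Your chain of inequalities matches the paper: necessity gives $\pPEP{true}\le\pPEP{interp}$, sufficiency applied to $\tilde L=L(c_L\cdot)/c_L$ with $c_{\tilde L}=c_L$ gives the lower bound, and $d\ge N+2$ makes the Gram lifting exact. Your reduction of strong duality to a Slater point also matches, and the rescaling-invariance remark is a useful precision.

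The gap is the Slater point itself. It is the only nontrivial ingredient, and the proposal never actually produces one.

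\textbf{First construction.} It cannot be completed by any choice of $F$. Since $x_j-x_\star\in(x_0-x_\star)+\mathrm{span}\{g_0,\dots,g_{j-1}\}$, orthonormality forces $\inner{g_j}{x_j-x_\star}=0$. Then $\mathcal{Q}_{\star,j}+\mathcal{Q}_{j,\star}=\inner{g_j}{x_j-x_\star}-2\theta_L^*(\|g_j\|)\le 0$. So your fallback $f_i=f_\star+\inner{g_i}{x_i-x_\star}-\eta$ is unavailable.

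\textbf{Second construction.} The isotropic quadratic $\frac{\mu}{2}\|x-x_\star\|^2$ makes every $x_n-x_\star$ and every $g_n$ a multiple of $x_0-x_\star$. So $G$ has rank one, and you explicitly leave open how to reach $G\succ 0$. This quadratic is also not $L(\cdot)$-inexactly smooth when $\inf_\delta L(\delta)=0$ (H\"older with $p<1$). Likewise, any bound $\theta_L^*(u)\le u^2/(2\mu')$ holds only for small $|u|$. Strictness must therefore be checked pointwise, at small gradient differences.

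\textbf{What the paper does.} It uses an anisotropic Huber-type instance $f=h(Q^{1/2}\cdot)$, with $h$ from \cref{lemma:inexactly_smooth_construction}: quadratic of curvature $k>0$ on a ball of radius $R$, linear outside, and genuinely $L(\cdot)$-inexactly smooth. Here $Q$ is a scaled tridiagonal matrix with $\|Q\|_{\mathrm{op}}=1/2$, $x_0=De_1$, $x_\star=0$, and $R$ is large enough that all iterates stay in the quadratic region, so $g_n=kQx_n$. Each multiplication by the tridiagonal $Q$ brings in one new coordinate, and $W_{n,n-1}\neq 0$ makes it enter $x_n$. Hence $P=[x_0\mid g_0\mid\cdots\mid g_N]$ is upper triangular with nonzero diagonal, so $G\succ 0$ with no perturbation needed. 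Strict convexity on the quadratic region then gives $\mathcal{Q}_{i,j}>0$.

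\textbf{How your route could be repaired.} Your perturbation idea does close, but entirely in Gram space, where $G$ is a free variable. There full rank is not the issue; what $W_{i,i-1}\neq0$ must supply is distinct iterates.
\begin{enumerate}
\item Write $x_n-x_\star=c_n(\mu)(x_0-x_\star)$. Then $c_n$ is a polynomial of exact degree $n$ with leading coefficient $(-1)^n\prod_{k=1}^{n}W_{k,k-1}$. So for all but finitely many small $\mu$, the $c_n$ are distinct and nonzero.
\item Then $\mathcal{Q}_{i,j}=\frac{\mu}{2}s^2-\theta_L^*(\mu s)>0$ for $s=\|x_i-x_j\|>0$. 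This uses $\theta_L^*(u)\le u^2/(2L(a))$ for small $|u|$, which follows from a supergradient of the concave $1/L$ at some $a>0$.
\item Take $\|x_0-x_\star\|<D$, replace $G_0$ by $G_0+\epsilon I$, and use continuity of $\theta_L^*$ near $0$.
\end{enumerate}
One of these two arguments must be supplied for the proof to be complete.
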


\subsection{Proof of \cref{thm:strong duality}}

First, we show that for any $L(\cdot)$ satisfying our assumptions, we can construct a Huber-type function that is convex and $L(\cdot)$-inexactly smooth in an arbitrary dimension $d$. From this, it follows that $\pPEP{gram}$~\eqref{eq:PEP_gram} has a Slater point, given $W$ has nonzero diagonal. We defer the proofs of these two results, \cref{lemma:inexactly_smooth_construction} and \cref{lemma:slater point}, to Appendix~\ref{sec: appendix PEP}.

\begin{lemma}\label{lemma:inexactly_smooth_construction}
    Let $L(\cdot)$ satisfy our assumptions and $R> 0$. Then the function $$h(x) = \begin{cases}
        \frac{k}{2}\|x\|^2 & \|x\| \leq R\\
        kR\|x\|-\frac{kR^2}{2} & \|x\| > R
    \end{cases}, \quad k := \inf_{\delta \geq 0} \left\{\frac{L\left(\delta\right)+\sqrt{L\left(\delta\right)^{2}+\frac{2\delta L\left(\delta\right)}{R^2}}}{2}\right\}  $$
    is $L(\cdot)$-inexactly smooth and $k > 0$.
    Further, for any $Q\succ 0$, with $\|Q\|_{\mathrm{op}} \leq 1$, $$f(x) := h(Q^{1/2}x) =  \begin{cases}
        \frac{k}{2}\|x\|_Q^2 & \|x\|_Q \leq R\\
        kR\|x\|_Q-\frac{kR^2}{2} & \|x\|_Q > R
    \end{cases}$$ is $L(\cdot)$-inexactly smooth as well, where $\|x\|_Q := \sqrt{x^TQx}$.
  
\end{lemma}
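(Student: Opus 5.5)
We reduce the claim to a one-dimensional inequality and then lift it using the calculus rules from Section~\ref{subsec:calculus}.

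\textbf{Step 1: reduce to dimension one.} The Huber function $h(x)=\psi(\|x\|)$ has the scalar profile
\[
\psi(t)=\tfrac{k}{2}t^2 \text{ for } |t|\le R, \qquad \psi(t)=kR|t|-\tfrac{kR^2}{2} \text{ for } |t|>R,
\]
which is even and convex. By \cref{composing with abs to norm} it suffices to show that $\psi$ is $L(\cdot)$-inexactly smooth on $\R$. The second claim then follows from \cref{L(delta) scaling calculus}(2): the map $x\mapsto Q^{1/2}x$ has $\|Q^{1/2}\|_{\mathrm{op}}^2=\|Q\|_{\mathrm{op}}\le 1$, so the smoothness function becomes $\|Q\|_{\mathrm{op}}L(\cdot)\le L(\cdot)$. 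Positivity of $k$ is immediate, since $k\ge \inf_\delta L(\delta)$ would be too weak if $L\to 0$. Instead we use that each term satisfies
\[
\tfrac{L+\sqrt{L^2+2\delta L/R^2}}{2}\ \ge\ \tfrac{1}{2}\sqrt{2\delta L(\delta)}/R .
\]
Combining this with $L(\delta)\ge L(1)>0$ for $\delta\le 1$, and with $\delta L(\delta)$ nondecreasing by concavity of $1/L$ (as in \cref{smooth plus nonsmooth bound on L}) for $\delta\ge1$, gives $k>0$.

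\textbf{Step 2: the scalar inequality.} By \cref{lemma:L(delta) conjugate equivalence}, an equivalent and convenient target is
\[
\psi(y)\le \psi(x)+\psi'(x)(y-x)+\theta_L(|y-x|) \qquad \text{for all } x,y .
\]
So we bound the Bregman gap $B(x,y)=\psi(y)-\psi(x)-\psi'(x)(y-x)$ and show $B\le \tfrac{L(\delta)}{2}(y-x)^2+\delta$ for every $\delta$. Since $\psi'$ is $k$-Lipschitz and bounded with range $[-kR,kR]$, we have the two bounds $B\le \tfrac{k}{2}(y-x)^2$ and $B\le 2kR|y-x|$. These crude bounds are not sharp enough, so a finer worst case is needed. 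Since $\psi$ is quadratic on $[-R,R]$ and linear outside, we expect the extremal configuration to be $x=-R$ (or $x$ in the quadratic region) with $y$ far on the other side. We would compute $\sup_{x,y:|y-x|=s}B$ exactly as a function of $s$, which should be $\tfrac{k}{2}s^2$ for $s\le 2R$ and linear with slope $2kR$ afterwards, minus a constant.

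\textbf{Step 3: choose $k$.} We then need, for each $s$, some $\delta$ with $\tfrac{L(\delta)}{2}s^2+\delta$ dominating that supremum. The formula for $k$ is exactly the positive root of $k^2-L(\delta)k-\delta L(\delta)/(2R^2)=0$, that is, $2R^2k^2=L(\delta)(2R^2k+\delta)$. This suggests matching a tangency condition in $s$: a quadratic $\tfrac{L}{2}s^2+\delta$ touching the Huber-type majorant. We would verify that for the minimizing $\delta$, this quadratic dominates the worst-case Bregman gap on every range of $s$, using the AM-GM type inequality $\tfrac{L}{2}s^2+\delta\ge s\sqrt{2L\delta}$ in the linear regime. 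The main obstacle is this step: getting the exact worst-case Bregman profile of $\psi$ and checking that the infimum over $\delta$ in the definition of $k$ (rather than a single fixed $\delta$) still makes the inequality hold for every $\delta$. For this we may need monotonicity of $\delta\mapsto L(\delta)$ together with convexity of $-1/L$ to pass from the optimal $\delta$ to arbitrary $\delta$. Alternatively we could use the cocoercive form in \cref{lemma:L(delta) conjugate equivalence}(1), where $|\psi'(x)-\psi'(y)|\le \min(k|x-y|,2kR)$ gives a cleaner target.
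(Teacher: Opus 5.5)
There is a genuine gap: the central claim of the lemma, that $\psi$ satisfies the inexact quadratic bound for \emph{every} $\delta\ge 0$ with this $k$, is never established. Your peripheral steps are sound and match the paper: the reduction to the scalar profile via \cref{composing with abs to norm}, the handling of $Q^{1/2}$ via \cref{L(delta) scaling calculus}, and positivity of $k$ from $\tfrac{1}{2R}\sqrt{2\delta L(\delta)}$ together with monotonicity of $\delta L(\delta)$. But the worst-case Bregman profile is only conjectured, and Step 3 is a plan that you yourself flag as the main obstacle. The way you propose to handle it is also misdirected. Checking the bound at ``the minimizing $\delta$'' and then transferring to other $\delta$ via monotonicity of $L$ and convexity of $-1/L$ cannot work: a bound $B\le \tfrac{L(\delta^\star)}{2}s^2+\delta^\star$ for one $\delta^\star$ does not give $B\le\theta_L(s)$. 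Moreover, the infimum defining $k$ need not be attained; for $L(\delta)=\beta^2/(2\delta)$ it is only approached as $\delta\to\infty$. None of this is needed. Let $k(\delta)$ be the positive root of $2R^2k^2=L(\delta)(2R^2k+\delta)$. Since $k$ is defined as an infimum, $k\le k(\delta)$ for each $\delta$, so you simply check each fixed $\delta$ separately. The AM--GM bound $\tfrac{L}{2}s^2+\delta\ge s\sqrt{2L\delta}$ is also too lossy in the linear regime: for constant $L$ one has $k=L$, and at $\delta=0$ it gives $0$ while the gap there is $2LR(s-R)>0$. Finally, ``for each $s$, some $\delta$'' has the quantifier backwards.

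To close the gap along your route, fix $\delta$ with $k>L(\delta)$; otherwise $B\le\tfrac{k}{2}s^2$ already suffices. Your guessed profile is correct: $\sup_{|y-x|=s}B=\tfrac{k}{2}s^2$ for $s\le 2R$ and $2kRs-2kR^2$ for $s\ge 2R$. The latter is attained at $x=-R$, since moving $x$ into $(-R,R)$ or below $-R$ only decreases $B$. On $s\ge 2R$ you must minimize $\tfrac{L(\delta)}{2}s^2-2kRs+2kR^2+\delta$ exactly. Its minimizer $s=2kR/L(\delta)$ exceeds $2R$, and the minimum $2kR^2+\delta-2k^2R^2/L(\delta)$ is nonnegative iff $k\le k(\delta)$. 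This also covers $s\le 2R$, because the quadratic-regime worst case at $s=2R$ has the same value as the linear one there.

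The paper avoids the profile computation by checking the cocoercive form (item 1 of \cref{lemma:L(delta) conjugate equivalence}) for $\|x\|,\|y\|\le R$. There it reads $\tfrac{k^2}{2L(\delta)}\|y-x\|^2-\tfrac{k}{2}\|y-x\|^2\le\delta$ for $\|y-x\|\le 2R$, which binds at $\|y-x\|=2R$. In your primal form, restricting to the ball would \emph{not} suffice: it only yields $k\le L(\delta)+\delta/(2R^2)$, because the binding configuration lies in the linear regime.

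Your closing ``alternatively'' remark points to the cleanest proof, but what it is missing is a \emph{lower} bound on the Bregman gap, not the upper bound on $|\psi'(x)-\psi'(y)|$. Since $\nabla h=k\,\Pi_{B(0,R)}$ is $k$-Lipschitz with norm at most $kR$, cocoercivity gives $h(y)-h(x)-\inner{\nabla h(x)}{y-x}\ge\tfrac{1}{2k}\|\nabla h(y)-\nabla h(x)\|^2$ with $\|\nabla h(y)-\nabla h(x)\|\le 2kR$. The requirement $\tfrac{u^2}{2k}\ge\tfrac{u^2}{2L(\delta)}-\delta$ on $[0,2kR]$ is then exactly $k\le k(\delta)$, in any dimension.
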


\begin{lemma}\label{lemma:slater point}
    Let $L(\cdot)$ satisfy our assumptions. Under the assumption that $W_{i,i-1} \ne 0$ for each $i = 1, \dots, N$, there exists feasible $F, G$ for the convex problem $\pPEP{gram}$~\eqref{eq:PEP_gram} such that $G \succ 0$ and $\mathcal{Q}_{i,j} (F,G) > 0$ for all $i \ne j \in \Ical_N^\star$. 
\end{lemma}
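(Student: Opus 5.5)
The plan is to exhibit an explicit instance: a Huber-type function from \cref{lemma:inexactly_smooth_construction}, run the method on it, and read off a strictly feasible $(F,G)$. Fix $R>0$ and a matrix $Q\succ 0$ with $\|Q\|_{\mathrm{op}}\le 1$, chosen generically (say diagonal with distinct entries). Let $f(x)=h(Q^{1/2}x)$ as in \cref{lemma:inexactly_smooth_construction}; it is convex, $L(\cdot)$-inexactly smooth, and minimized at $x_\star=0$ with $g_\star=0$. Pick $x_0$ and run the fixed-step method $x_n=x_0-\sum_{i<n}W_{n,i}g_i$ with $g_i=\nabla f(x_i)$. Scaling $x_0$ so that $\|x_0\|<D$ makes $\mathcal D>0$. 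Since $f$ interpolates the data, the necessity part of \cref{thm:interpolation-general} (via \cref{lemma:L(delta) conjugate equivalence}) gives $\mathcal Q_{i,j}\ge 0$. Hence the Gram data $(F,G)$ is feasible for $\pPEP{gram}$.

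Strictness of $\mathcal Q_{i,j}$ for $i\neq j$ does not follow from this, because $f$ only meets the inequalities with possible equality. To get slack, I would replace $f$ by $(1-\eta)f$ for small $\eta\in(0,1)$. By \cref{L(delta) scaling calculus}, this function is $(1-\eta)L(\cdot/(1-\eta))$-inexactly smooth. Since $1-\eta<1$ and $L$ is nonincreasing, $\frac{1}{2(1-\eta)L(\delta/(1-\eta))}$ strictly exceeds $\frac{1}{2L(\delta)}$ by a uniform factor, so the cocoercive inequality of \cref{lemma:L(delta) conjugate equivalence} gives
\[
\mathcal Q_{i,j}\ \ge\ \sup_{\delta\ge0}\Big\{\tfrac{1}{2(1-\eta)L(\delta/(1-\eta))}\|g_i-g_j\|^2-\delta\Big\}-\theta_L^*(\|g_i-g_j\|).
\]
Alternatively I would add an explicit term from the convexity gap of $f$. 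When $g_i\ne g_j$, the cleanest route is to use the plain subgradient inequality for the Huber function, $f_i-f_j-\langle g_j,x_i-x_j\rangle\ge \tfrac{1}{2k}\|g_i-g_j\|^2>0$ in the quadratic region. Comparing this with $\theta_L^*$ through the definition of $k$ as an infimum, and then shrinking by $1-\eta$, should give strict positivity. When $g_i=g_j$ but $x_i\ne x_j$, strict convexity inside the quadratic region gives $\mathcal Q_{i,j}>0$. So I would choose $x_0$ with $\|x_0\|_Q<R$, keeping every iterate in the strongly convex quadratic region of $f$. This can be done by taking $R$ large relative to $\|x_0\|$: all iterates are then affine in $x_0$, and $\|x_n\|\le C\|x_0\|$ for a constant $C$ depending on $W$ and $k$.

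For $G\succ0$, note that inside the quadratic region $g_n=kQx_n$. The iteration therefore becomes linear, $x_n=p_n(kQ)x_0$, where $p_n$ is a polynomial of exact degree $n$ whose leading coefficient is a product of the $-W_{i,i-1}$, which is nonzero by hypothesis. The columns $x_0,g_0,\dots,g_N$ are then $x_0,kQp_0(kQ)x_0,\dots,kQp_N(kQ)x_0$, which span the Krylov space $\mathrm{span}\{x_0,Qx_0,\dots,Q^{N+1}x_0\}$ by triangularity. Taking $Q$ diagonal with $N+2$ distinct eigenvalues and $x_0$ having nonzero components in those eigenvectors (possible since $d\ge N+2$), this space has dimension $N+2$, so $P$ has full column rank and $G=P^TP\succ0$.

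I expect the main obstacle to be the strictness of $\mathcal Q_{i,j}$. The subtle cases are pairs involving $\star$, and pairs where the Huber quadratic bound $\tfrac1{2k}$ must strictly dominate $\theta_L^*(u)/u^2$. The quantity $\theta_L^*(u)/u^2=\sup_\delta\{\tfrac1{2L(\delta)}-\delta/u^2\}$ can approach $\tfrac{1}{2L(\infty)}$ for large $u$. I would handle this with the $(1-\eta)$ rescaling together with keeping $\|g_i-g_j\|$ small (choosing $\|x_0\|$ small), since $\theta_L^*(u)\to0$ faster than $u^2$ when $L(0)=\infty$, and with factor $\le 1/(2L(0))$ otherwise.
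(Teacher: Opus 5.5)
Your construction follows the paper's proof. The ingredients are the same: the Huber function of \cref{lemma:inexactly_smooth_construction} composed with $Q^{1/2}$, a trajectory kept inside the quadratic region so that $g_n=kQx_n$, and $W_{n,n-1}\neq 0$ forcing $P=[x_0\,|\,g_0\,|\,\cdots\,|\,g_N]$ to have full column rank. The paper instead takes a tridiagonal $Q$ with $\|Q\|_{\mathrm{op}}=1/2$ and $x_0=De_1$, which makes $P$ upper triangular with nonzero diagonal. It also inflates $R$ rather than shrinking $x_0$. Your diagonal $Q$ with distinct eigenvalues plus the Krylov/Vandermonde argument is an equally valid way to get $G\succ 0$. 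Strict $\mathcal D>0$ is not needed, since $\mathcal D$ is affine in $G$.

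The step that fails as written is your first mechanism for strictness. Passing to $(1-\eta)f$ and using only its class-level cocoercive inequality gives $\mathcal Q_{i,j}\ge\theta^*_{L_\eta}(u)-\theta^*_L(u)$, with $L_\eta(\delta)=(1-\eta)L(\delta/(1-\eta))$ and $u=\|g_i-g_j\|$. This lower bound can be exactly zero: for $L(\delta)=\beta^2/(2\delta)$, both terms vanish whenever $u\le(1-\eta)\beta$. The claimed ``uniform factor'' also breaks at any $\delta$ with $L(\delta)=\infty$, which is exactly where the supremum sits in that example.

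Strictness has to come from the explicit quadratic structure of $f$, i.e.\ your ``alternative'' route. That route closes without any rescaling or small-$u$ asymptotics. Inside the region,
\[
f_i-f_j-\inner{g_j}{x_i-x_j}=\frac{k}{2}\|x_i-x_j\|_Q^2\ge \frac{u^2}{2k\|Q\|_{\mathrm{op}}}.
\]
On the other side, for each $\delta$ the definition of $k$ rearranges to $\frac{(2kR)^2}{2L(\delta)}-\delta\le\frac{(2kR)^2}{2k}$. Since $\theta_L^*(u)/u^2=\sup_{\delta\ge 0}\{\frac{1}{2L(\delta)}-\frac{\delta}{u^2}\}$ is nondecreasing in $u$, this gives
\[
\theta_L^*(u)\le\frac{u^2}{2k}\qquad\text{for all } 0\le u\le 2kR .
\]
The restriction $u\le k\|Q\|_{\mathrm{op}}^{1/2}\|x_i-x_j\|_Q\le 2kR$ holds automatically for points in the region. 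Hence
\[
\mathcal Q_{i,j}\ge\frac{u^2}{2k}\left(\|Q\|_{\mathrm{op}}^{-1}-1\right)>0
\]
as soon as $\|Q\|_{\mathrm{op}}<1$ and $u>0$. The condition $\|Q\|_{\mathrm{op}}<1$ is exactly the slack the paper's factor $1/2$ provides; your $(1-\eta)$ rescaling would serve the same purpose. The condition $u>0$ follows from $G\succ 0$: the $g_i$ are linearly independent, hence distinct and nonzero. This covers the pairs involving $\star$, since $g_\star=0$ and $x_\star=0$ lies in the region. It also makes your case ``$g_i=g_j$, $x_i\neq x_j$'' vacuous, because $x\mapsto kQx$ is injective.
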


Existence of a Slater point from the above \cref{lemma:slater point} ensures strong duality between the convex primal and dual formulations $\pPEP{gram}$ and $\dPEP{gram}$ (see~\cite[Theorem 28.2]{rockafeller}). Similarly, the lower bounding formulations of $\pPEP{gram}^{c_L}$ and $\dPEP{gram}^{c_L}$ are equal. Then, recalling that if $d \geq N+2$, one may always factor positive semidefinite $G$ into its Cholesky form, the Gram reformulations are exact, $\pPEP{interp} = \pPEP{gram}$ and $\dPEP{gram}^{c_L} = \pPEP{interp}^{c_L}$. Finally, the inequalities $\pPEP{interp}^{c_L} \leq \pPEP{true} \leq \pPEP{interp}$ follow from our interpolation theorem (Theorem~\ref{thm:interpolation-general}).

 \section{A Constructive Approach to Optimized Algorithm Design}\label{sec:algo design}

The above interpolation and performance estimation theory directly enables optimized algorithm designs via the constructive approach of Drori and Taylor~\cite{constructive_approach}. Below, we introduce minimax optimality among algorithms and formalize their constructive PEP approach. Then we describe three resulting designs targeting different levels:

Section~\ref{subsec:BSD} presents an exactly minimax optimal method for $(\beta,0)$-\Holder smooth functions (i.e., $L(\delta) = \frac{\beta^2}{2\delta}$-inexactly smooth). Section~\ref{subsec:asymptotic_OGM} presents an optimized method for any $(\beta,p)$-\Holder smooth functions by optimizing performance over $L(\delta)=\kappa/\delta^q$-inexactly smooth functions. This method is big-O optimal and we conjecture that it possesses the optimal leading coefficient. Finally, Section~\ref{subsec:OBL} presents a universal, parameter-free method for any $L(\cdot)$.
The resulting method offers optimized big-O optimal guarantees for any \Holder smooth setting or sums thereof.

Optimality in algorithm design is defined in terms of a considered family of problem instances and a family of algorithms. 
As problems, given a function $L(\cdot)$ and some $D>0$, we consider minimizing $L(\cdot)$-inexactly smooth convex functions from an initialization $x_0$ with $\|x_0-x_\star\|\leq D$. Denote this class
\begin{align}\label{def:problem_family}
\begin{split}
\mathtt{P}_{L(\cdot), D} := \{(f, x_0) \colon &f \text{ is convex, $L(\cdot)$-inexactly smooth}, \\ &\text{attains a minimizer at $x_\star$ with $\|x_0-x_\star\| \leq D,$}\\
&\text{and a subgradient oracle $g$ such that $g(x) \in \partial f(x)$}\}.
\end{split}
\end{align}
Note that $\mathtt{P}_{L(\cdot), D}$ contains problems over every dimension $d$. As algorithms, we consider $N$-step methods constructing points $x_1,\dots,x_N$ satisfying the subgradient span condition $ x_n \in x_0 + \spann{g_0, \dots, g_{n-1}}$ with $g_i=g(x_i)$. Denote the set of such methods by $\Algclass{span}$. Note that this contains, for example, all FSFOM. 

The task of finding the algorithm with the best worst-case performance against a family of problem instances, measured by final objective gap, is then
\begin{equation}\label{def:minimax_opt}
    \min_{\mathtt{a} \in \Algclass{span}}\max_{(f,x_0) \in \mathtt{P}_{L(\cdot), D}} f(x_N) - f(x_\star).
\end{equation} 
We say an algorithm $\mathtt{a}$ is minimax optimal if it attains the above min. It is big-O optimal if, for all $N$, it remains within a constant factor of attaining this rate. Proving minimax optimality of methods historically has been done by finding a hard problem instance for all algorithms~\cite{nemirovskilowerbounds}. By weak duality, \eqref{def:minimax_opt} is lower bounded by 
\begin{equation}\label{def:maximin_hard}
  \max_{(f,x_0) \in \mathtt{P}_{L(\cdot), D}}  \min_{\mathtt{a} \in \Algclass{span}} f(x_N) - f(x_\star).
\end{equation} 
Strong duality often holds. For constant $L(\cdot)=L$, this is established by the OGM method~\cite{kim2016optimized} and matching lower bound~\cite{drori2017exact}. For $L(\delta)=\kappa/\delta$, our Theorem~\ref{thm:minimax_BSFOM} below establishes strong duality.

\subsection{Extension of the Constructive Approach of~\cite{constructive_approach}}\label{subsec:Constructive Approach}

Here we consider the constructive approach to algorithm design problems with semidefinite programming PEPs of Drori and Taylor~\cite{constructive_approach}. These techniques generalize directly to the inexactly smooth setting with its convex PEP formulation.
To this end, consider the following hypothetical first-order method, iterating
\begin{align*}
    x_n &\in \argmin \{f(x) \colon x \in x_0 + \mathrm{span}\{g_0, g_1, \dots, g_{n-1}\}\}\\
   g_n &\in \partial f(x_n) \ \text{such that } \inner{g_n}{g_i} = 0, \ \forall 0 \leq i < n.
\end{align*}
For the sake of this motivation, assume the above $\argmin$ is nonempty and consider any selection of (possibly adversarial) $x_n$ and $g_n$. Following the nomenclature of~\cite{constructive_approach}, we refer to such a hypothetical algorithm as a Greedy First-Order Method (GFOM).

The constructive approach proceeds by (i) solving the PEP problem associated with GFOM, (ii) computing dual multipliers proving its convergence rate, and (iii) identifying a fixed-step first-order method with the same performance (and, in fact, the same PEP proof) as GFOM. Formally, denote GFOM's PEP by
\begin{equation}\label{constructive_approach_PEP_true}\pAlg_{true} = \begin{cases} \max_{(f,x_0)} & f(x_N)-f_\star\\ 
   \mathrm{s.t.}  & x_n \text{ is constructed by some GFOM}\\
   & (f,x_0) \in  \mathtt{P}_{L(\cdot), D}.
    \end{cases}\end{equation}
From our interpolation theory, we derive the following upper bounding problem
\begin{equation}\label{constructive_approach_PEP}
\pAlg_{true} \leq \pAlg_{interp} = \begin{cases} \max_{x_i, f_i, g_i} & f_N-f_\star\\ 
   \mathrm{s.t.}&   \inner{g_i}{g_j} = 0, \quad \forall 0 \leq j < i = 1, \dots, N\\
& \inner{g_i}{x_j-x_0} = 0, \quad \forall 1\leq j\leq i = 1,\dots, N\\
& g_\star = 0\\
& \mathcal{D} \geq 0\\
 & \mathcal{Q}_{i,j}\geq 0, \quad i,j \in \mathcal{I}_N^\star.
    \end{cases}\end{equation}
Observe that after substituting the orthogonality constraints, this is a convex problem in the variables $f_i$, $\inner{g_i}{x_j}$, and $\|g_i\|^2$. The corresponding dual problem can be formulated as follows (the derivation of this program is deferred to \cref{sec: appendix algo design})
\begin{equation}\label{upper_triangular_dual_pep}\dAlg_{interp} = \begin{cases} \min_{\lambda, {t}, s}
   & {\frac{1}{2}D^2s + \sum_{i,j \in \mathcal{I}_N^\star} \lambda_{i,j }\Lleft(\lambda_{i,j}/ t_{i,j})}\\
    \mathrm{s.t.} & \lambda_{\star,j}^2/s \leq  \sum_{i=0}^{j-1}   t_{i,j}+ \sum_{i=j+1}^N  t_{j,i} +  t_{\star,j} -t_{j,\star} , \quad  \forall j\\
    &\sum_{i=j+1}^N \lambda_{j,i} - \sum_{i=0}^{j-1} \lambda_{i,j}  = \lambda_{\star,j}-\lambda_{j,\star}, \quad \forall j \ne N\\
    &
    \sum_{i=0}^{N-1}\lambda_{i,N} = \sum_{i=0}^{N-1}\lambda_{\star,i}-\lambda_{i,\star}\\
    & \sum_{i=0}^N \lambda_{\star,i} -\lambda_{i,\star} = 1\\
   &  \lambda \geq 0, \  t \geq 0, \ s \geq 0.
\end{cases}\end{equation}
Recall $s \Lleft(s)$ is convex by \cref{lemma:equivalences-of-convexity-assumptions}. From this, the convexity of the dual objective is clear as it uses the perspective function of this~\cite[Section 3.2.6]{boyd2004convex}.

In particular, solving this dual problem gives certificates $\lambda, t, s$ that prove a convergence rate for GFOM. The ``Subspace Search Elimination Procedure'' (SSEP) of~\cite{constructive_approach} shows how to construct an FSFOM for which this same certificate also proves a convergence rate. Namely, after a notational
rearrangement of~\cite[Corollary 1]{constructive_approach}, set $z_0=x_0$, $z_1 = x_0 - \lambda_{\star,0}g_0$, and iterate for $n=1,\dots,N$
\begin{align}\label{eq:dual_generated_algo}
    \begin{split}
    x_n &= \frac{\sum_{i=0}^{n-1} \left(\lambda_{i,n} x_i - t_{i,n} g_i\right) + \lambda_{\star,n} z_n}{\sum_{i=0}^{n-1} \lambda_{i,n} + \lambda_{\star,n}}\\
    z_{n+1} &= z_n-\lambda_{\star,n}g_n.
    \end{split}
\end{align} 
The following lemma explicitly connects this to~\cite{constructive_approach} and extracts a guarantee.

\begin{lemma}\label{lem:equivalence}
Fix $N \geq 1$, $D > 0$ and let $(\lambda, t, s)$ be feasible
for~\eqref{upper_triangular_dual_pep}. Suppose $\Lambda_n := \sum_{i=0}^{n-1} \lambda_{i,n} + \lambda_{\star,n} \ne 0$ for all $n = 1,\dots,N$.
Then the method~\eqref{eq:dual_generated_algo} is exactly the fixed-step method
of~\cite[Corollary~1]{constructive_approach} under the identifications
\begin{align}\label{eq:dual-dictionary}
\begin{split}
    \tilde\gamma_{n,n} = \Lambda_n, \quad
    \tilde\gamma_{n,j} = -\lambda_{j,n} \qquad &\forall j = 1, \dots, n-1,\\
    \tilde\beta_{n,j}  = t_{j,n} + \lambda_{\star,n}\lambda_{\star,j} \qquad &\forall j = 0, \dots, n-1.
\end{split}
\end{align}
Further, for any problem instance $(f, x_0) \in \mathtt{P}_{L(\cdot),D}$, this method inherits the dual objective bound of 
$f(x_N) - f_\star \leq \frac{1}{2}D^2s + \sum_{i,j \in \mathcal{I}_N^\star} \lambda_{i,j}\Lleft(\lambda_{i,j}/t_{i,j}).$
\end{lemma}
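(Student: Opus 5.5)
Two things must be shown: that \eqref{eq:dual_generated_algo} is the fixed-step method of \cite[Corollary~1]{constructive_approach}, and that this method inherits the dual bound. I would handle them in that order, with the first done by direct algebraic matching.

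\textbf{Step 1: identification.} In \cite[Corollary~1]{constructive_approach}, iterates are defined implicitly by
\[
\tilde\gamma_{n,n} x_n = \sum_{j<n} \bigl(-\tilde\gamma_{n,j}\bigr) x_j + \tilde\gamma_{n,n}x_0 - \sum_{j<n}\tilde\beta_{n,j} g_j
\]
(up to its sign conventions). I would unroll $z_n = x_0 - \sum_{j=0}^{n-1}\lambda_{\star,j}g_j$ and substitute it into the update for $x_n$. Multiplying through by $\Lambda_n$ then gives
\[
\Lambda_n x_n = \sum_{j<n}\lambda_{j,n}x_j + \lambda_{\star,n}x_0 - \sum_{j<n}\bigl(t_{j,n}+\lambda_{\star,n}\lambda_{\star,j}\bigr)g_j .
\]
Comparing coefficients yields the dictionary \eqref{eq:dual-dictionary}. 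The $x_0$ term, with coefficient $\lambda_{\star,n}+\lambda_{0,n}$, has to be checked against the convention that $j=0$ is absorbed into $x_0$. The assumption $\Lambda_n\neq 0$ ensures the division is legitimate and the method is a well-defined FSFOM.

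\textbf{Step 2: the rate.} Rather than lean on the SDP machinery of \cite{constructive_approach}, I would redo the weighted-sum argument directly in the inexact setting. Fix $(f,x_0)\in\mathtt{P}_{L(\cdot),D}$ and run the method. For each pair $(i,j)$ with $\lambda_{i,j}>0$, I would invoke condition 1 of \cref{lemma:L(delta) conjugate equivalence}, choosing $\delta_{i,j}=\Lleft(\lambda_{i,j}/t_{i,j})$. With this choice $L(\delta_{i,j})\le \lambda_{i,j}/t_{i,j}$ by lower semicontinuity and monotonicity, so
\[
\frac{\lambda_{i,j}}{2L(\delta_{i,j})}\|g_i-g_j\|^2 \ge \frac{t_{i,j}}{2}\|g_i-g_j\|^2 .
\]
Summing $\lambda_{i,j}\mathcal{Q}_{i,j,\delta_{i,j}}\ge 0$ produces additive error exactly $\sum\lambda_{i,j}\Lleft(\lambda_{i,j}/t_{i,j})$. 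I would add $\frac{s}{2}\bigl(D^2-\|x_0-x_\star\|^2\bigr)\ge0$. The function-value linear constraints of \eqref{upper_triangular_dual_pep} then make the $f$-terms collapse to $f_N-f_\star$.

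It remains to show that the residual quadratic form in $(x_0-x_\star,g_0,\dots,g_N)$ is nonpositive. Here I would use the step rules \eqref{eq:dual_generated_algo}. By construction, the inner-product terms $\langle g_j,x_i-x_j\rangle$ weighted by $\lambda$ combine with the $t$-weighted $\|g_i-g_j\|^2$ terms. Together with the $z_n$ recursion, the form reduces to $-\frac{s}{2}\bigl\|x_0-x_\star-\frac1s\sum_j\lambda_{\star,j}g_j\bigr\|^2$. The leftover diagonal terms $\bigl(\lambda_{\star,j}^2/s - \text{(sum of } t\text{'s)}\bigr)\|g_j\|^2$ are nonpositive by the first constraint.

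\textbf{Main obstacle.} The hardest part is this last completion of squares: verifying that every cross term $\langle g_i,g_j\rangle$ for $i\neq j$ cancels under the SSEP choice. This is precisely the content of the proof of \cite[Corollary~1]{constructive_approach}. My first approach would be to argue that, after fixing the $\delta_{i,j}$, all inequalities are exactly the smooth-case ones with per-pair constants $\lambda_{i,j}/t_{i,j}$. The algebra of \cite{constructive_approach} never uses that the constant is common across pairs, so their cancellation applies verbatim. Failing that, I would expand the cross terms by hand inductively in $n$.
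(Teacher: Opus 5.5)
Your Step~1 is the paper's proof. You unroll $z_n$, multiply by $\Lambda_n$, and subtract $\Lambda_n x_0$, so the iterate terms become $\sum_{i=1}^{n-1}\lambda_{i,n}(x_i-x_0)$. This settles the $x_0$-coefficient question you raise. The Drori--Taylor recursion is written in the differences $x_j-x_0$, so your displayed form, which also adds $\tilde\gamma_{n,n}x_0$, counts $x_0$ twice. For the rate, the paper only cites \cite[Corollary~1]{constructive_approach}. Your choice $\delta_{i,j}=\Lleft(\lambda_{i,j}/t_{i,j})$, which gives $\lambda_{i,j}/(2L(\delta_{i,j}))\ge t_{i,j}/2$, is exactly what justifies transferring that smooth-case result. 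Your collapse of the function values to $f_N-f_\star$ and your handling of the diagonal terms are correct.

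The gap is the cross-term cancellation you defer: it does not hold for general feasible $s$. Group by gradient all inner products of gradients with iterates, namely those from the $\mathcal{Q}_{i,n}$ and the terms $\lambda_{\star,n}\langle g_n,x_n-x_0\rangle$. They sum to $\sum_n\langle g_n,\Lambda_nx_n-\lambda_{\star,n}x_0-\sum_{i<n}\lambda_{i,n}x_i\rangle$, and the step rule turns this into $-\sum_{j<n}(t_{j,n}+\lambda_{\star,n}\lambda_{\star,j})\langle g_j,g_n\rangle$. The terms $-\frac{t_{j,n}}{2}\|g_j-g_n\|^2$ supply $+t_{j,n}\langle g_j,g_n\rangle$. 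However, the remainder of your completed square, $\frac{1}{2s}\|\sum_j\lambda_{\star,j}g_j\|^2$, supplies only $+\frac{1}{s}\lambda_{\star,j}\lambda_{\star,n}\langle g_j,g_n\rangle$. That leaves $(\frac1s-1)\sum_{j<n}\lambda_{\star,j}\lambda_{\star,n}\langle g_j,g_n\rangle$, which vanishes only when $s=1$.

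So the Drori--Taylor algebra does not apply verbatim. The statement itself has a normalization slip that the paper's black-box citation hides. Take $N=1$ and $L(\cdot)\equiv L$. The point $\lambda_{0,1}=\lambda_{\star,0}=\lambda_{\star,1}=\frac12$, $t_{0,1}=t_{\star,0}=t_{\star,1}=\frac{1}{2L}$, $s=\frac{L}{4}$ is feasible for \eqref{upper_triangular_dual_pep} with value $\frac{LD^2}{8}$. Yet \eqref{eq:dual_generated_algo} gives $x_1=x_0-(\frac{1}{2L}+\frac14)g_0$, which is not even dimensionally consistent. For $f(x)=\frac{L}{2}x^2$ with $L=8$ and $x_0=D$, this yields $f(x_1)-f_\star=9D^2>D^2$.

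There are two equivalent repairs. One is to use $z_{n+1}=z_n-\frac{\lambda_{\star,n}}{s}g_n$, i.e.\ $\tilde\beta_{n,j}=t_{j,n}+\lambda_{\star,n}\lambda_{\star,j}/s$; this recovers OGM's $x_1=x_0-\frac{3}{2L}g_0$. The other is to state the lemma in homogeneous form with $s=1$ and the bound divided by $\sum_i(\lambda_{\star,i}-\lambda_{i,\star})$. That is the normalization the paper actually uses for the certificate behind Algorithm~\ref{alg:SSEP_BSD} and in the potential for Algorithm~\ref{alg:IS-OGM}. With either correction, your hand expansion becomes a complete, self-contained proof that needs no appeal to \cite{constructive_approach}, which is more transparent than the paper's citation.
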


\begin{proof}
Recall that we set $z_0 = x_0$, $z_1 = x_0 - \lambda_{\star,0}g_0$ and, for $n = 1,\dots,N$,
\begin{equation}\label{eq:equiv-iteration}
    x_n = \frac{\sum_{i=0}^{n-1}\lambda_{i,n}x_i
        + \lambda_{\star,n}z_n
        - \sum_{i=0}^{n-1} t_{i,n}g_i}{\Lambda_n},
    \qquad z_{n+1} = z_n - \lambda_{\star,n}g_n.
\end{equation}
Unrolling the $z$-recurrence gives
$z_n = x_0 - \sum_{k=0}^{n-1}\lambda_{\star,k}g_k$. Substituting
into~\eqref{eq:equiv-iteration},
$$
    \Lambda_nx_n
    = \sum_{i=0}^{n-1}\lambda_{i,n}x_i
        + \lambda_{\star,n}x_0
        - \sum_{i=0}^{n-1}\left(t_{i,n} + \lambda_{\star,n}\lambda_{\star,i}\right)g_i.
$$
Subtracting $\Lambda_nx_0 = \sum_{i=0}^{n-1}\lambda_{i,n}x_0 + \lambda_{\star,n}x_0$, dividing by $\Lambda_n\neq 0$,
and rearranging gives
\begin{equation}\label{eq:equiv-DT-form}
    x_n = x_0
        + \sum_{i=1}^{n-1}\frac{\lambda_{i,n}}{\Lambda_n}(x_i - x_0)
        - \sum_{i=0}^{n-1}\frac{t_{i,n} + \lambda_{\star,n}\lambda_{\star,i}}{\Lambda_n}g_i.
\end{equation}
The correspondence given in~\eqref{eq:dual-dictionary} applied to~\eqref{eq:equiv-DT-form} is exactly the fixed-step
formula given in~\cite[Corollary 1]{constructive_approach}. From this, since
$\tilde\gamma_{n,n} = \Lambda_n \ne 0$ by assumption, their corollary implies $f(x_N) - f_\star \le \frac{1}{2}D^2s + \sum_{i,j \in \mathcal{I}_N^\star} \lambda_{i,j}\Lleft(\lambda_{i,j}/t_{i,j})$.
\end{proof}

\subsection{An Exactly Optimal Method for $(\beta,0)$-\Holder Smooth Convex Minimization}\label{subsec:BSD}

As a first application, consider $(\beta,0)$-\Holder smooth convex minimization.
By \cref{prop:Equiv Char of HS functions}, this is exactly the class of $L(\delta) = \frac{\beta^2}{2\delta}$-inexactly smooth convex functions. Our interpolation theorem is tight for this setting, by \cref{thm:interpolation-p0}. Hence, from \cref{thm:strong duality}, we have $\pPEP{true} = \pPEP{interp}$ and similarly, $\pAlg_{true}=\pAlg_{interp}$. 

This class is closely related to the well-studied model of $M$-Lipschitz convex minimization. While any $M$-Lipschitz convex function is necessarily $(2M, 0)$-\Holder smooth, $(\beta,0)$-\Holder smoothness combined with existence of a minimizer implies $\beta$-Lipschitzness. As a result, these distinct models are equivalent only up to differences in universal constants. Which of these two models is more relevant is a modeling choice; as one nice property, $(\beta,0)$-\Holder smoothness is tilt invariant (i.e., preserved under addition with linear functions). 

The constructive approach in~\cite{constructive_approach} gave a minimax optimal method for Lipschitz convex problems. Applied to $(\beta,0)$-\Holder smooth convex problems below yields a similar (but distinct) minimax optimal method here. 

\subsubsection{A Minimax Optimal Algorithm}
With $L(\delta) = \frac{\beta^2}{2\delta}$, \eqref{upper_triangular_dual_pep} simplifies to
$$\dAlg_{interp} = \begin{cases} \min_{\lambda, t \geq 0} & \frac{D^2+\sum_{0\leq i<j\leq N}\beta^2 t_{i,j}}{2\left(\sum_{i=0}^N \lambda_{\star,i}-\lambda_{i,\star}\right)}\\
    \mathrm{s.t.}   & \sum_{i=0}^{j-1}  - t_{i,j}+ \sum_{i=j+1}^N -t_{j,i} - t_{\star,j} +t_{j,\star} + \lambda_{\star,j}^2 \leq 0, \quad  \forall j\\
    &  \sum_{i=j+1}^N \lambda_{j,i} -\sum_{i=0}^{j-1} \lambda_{i,j}  = \lambda_{\star,j} -\lambda_{j,\star}, \quad \forall j \ne N\\
    &
    \sum_{i=0}^{N-1}\lambda_{i,N} = \sum_{i=0}^{N-1}\lambda_{\star,i}-\lambda_{i,\star}.\end{cases}$$
A simple calculation verifies that the following is a feasible solution, setting
\begin{align}\label{eq:feasible_SSEP_dual_sol}
\begin{split}
    \lambda_{\star,i} = \frac{D\sqrt{2}}{\beta\sqrt{N+1}}, \quad &i = 0, \dots, N, \quad
    \lambda_{i,i+1} = \frac{\sqrt{2}D(i+1)}{\beta \sqrt{N+1}}, \quad i = 0, \dots, N-1,\\
    t_{i,j} &= \frac{2D^2}{\beta^2 N(N+1)}, \quad i < j = 1, \dots, N
    \end{split}
\end{align}
and all other variables as zero. The dual objective value of this candidate solution is
$$\frac{D^2+\sum_{i,j}\beta^2 t_{i,j}}{2\sum_{i=0}^N \lambda_{\star,i}} = \frac{\beta D}{\sqrt{2(N+1)}}.$$
\Cref{alg:SSEP_BSD} presents the algorithm induced by this certificate, reformulating~\eqref{eq:dual_generated_algo}.

\begin{algorithm}
\caption{SSEP Method for $(\beta,0)$-\Holder Smooth Convex Minimization}
\begin{algorithmic}\label{alg:SSEP_BSD}
\Input $x_0 \in \mathbb{R}^d$, iteration budget $N$, parameters $\beta, \, D$
\For{$n = 1, \ldots, N$}
    \State $y_n = \frac{n}{n+1}x_{n-1} + \frac{1}{n+1}x_0$
    \State $d_n = \frac{1}{n+1}\sum_{j=0}^{n-1} g_j$
    \State $x_n = y_n - \frac{\sqrt{2}D\sqrt{N+1}}{\beta N} d_n$
\EndFor
\end{algorithmic}
\end{algorithm}

Trivial modifications of the known hard instance for Lipschitz convex problems establish a matching lower bound via~\eqref{def:maximin_hard}. Hence, this method is minimax optimal. 
\begin{theorem}\label{thm:minimax_BSFOM}
    For any $N\geq 1, D>0$ and any convex, $(\beta, 0)$-\Holder smooth $f$ with minimizer $x_\star$ satisfying $\|x_0-x_\star\| \leq D$,  \Cref{alg:SSEP_BSD} solves~\eqref{def:minimax_opt}, having its terminal iterate be guaranteed to have
    $$f(x_N) - f(x_\star) \leq \frac{\beta D}{\sqrt{2(N+1)}}.$$
\end{theorem}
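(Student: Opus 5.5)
The proof has two halves: an upper bound for \Cref{alg:SSEP_BSD} coming from the dual certificate, and a matching lower bound from a hard instance via~\eqref{def:maximin_hard}.

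\textbf{Upper bound.} First I will check that the candidate~\eqref{eq:feasible_SSEP_dual_sol} is feasible for the simplified dual $\dAlg_{interp}$ and that its objective equals $\beta D/\sqrt{2(N+1)}$.

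For the flow-balance equalities, take $j=0$. The left side is $\lambda_{0,1}=\sqrt2 D/(\beta\sqrt{N+1})$, which equals $\lambda_{\star,0}$. For $1\le j\le N-1$, the left side is $\lambda_{j,j+1}-\lambda_{j-1,j}=\sqrt2D/(\beta\sqrt{N+1})=\lambda_{\star,j}$. The $N$ equation requires $\lambda_{N-1,N}=N\sqrt2D/(\beta\sqrt{N+1})$, and this equals $\sum_{i<N}\lambda_{\star,i}$.

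For the quadratic constraints, each $j$ is incident to exactly $N$ pairs $(i,j)$ with $i\neq j$ among $0,\dots,N$. The $t$-sum is therefore $N\cdot 2D^2/(\beta^2N(N+1))=2D^2/(\beta^2(N+1))=\lambda_{\star,j}^2$, so these constraints hold with equality. The objective is $(D^2+\beta^2\binom{N+1}{2}t)/(2(N+1)\lambda_\star)$. Substituting gives $2D^2/(2\sqrt2 D\sqrt{N+1}/\beta)=\beta D/\sqrt{2(N+1)}$.

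Since $\Lambda_n=\lambda_{n-1,n}+\lambda_{\star,n}>0$, \cref{lem:equivalence} applies. It shows the method~\eqref{eq:dual_generated_algo} generated by this certificate achieves the bound on every instance in $\mathtt P_{L(\cdot),D}$ with $L(\delta)=\beta^2/(2\delta)$. By \cref{prop:Equiv Char of HS functions}, this class is exactly the $(\beta,0)$-\Holder smooth class.

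The remaining upper-bound step is to show that~\eqref{eq:dual_generated_algo} with these weights is \Cref{alg:SSEP_BSD}. Here $\lambda_{n-1,n}/\Lambda_n = n/(n+1)$ and $\lambda_{\star,n}/\Lambda_n=1/(n+1)$. The gradient coefficients are $t_{i,n}/\Lambda_n = \frac{2D^2}{\beta^2N(N+1)}\cdot\frac{\beta\sqrt{N+1}}{\sqrt2 D(n+1)}=\frac{\sqrt2 D}{\beta N\sqrt{N+1}(n+1)}$. The term $\frac{1}{n+1}z_n$ contributes $-\frac{1}{n+1}\cdot\frac{\sqrt2D}{\beta\sqrt{N+1}}\sum_{i<n}g_i$. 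Summing the two coefficients on $\sum_{i<n} g_i/(n+1)$ gives $\frac{\sqrt2 D}{\beta\sqrt{N+1}}(1+\frac1N)=\frac{\sqrt2 D\sqrt{N+1}}{\beta N}$, which is the algorithm's step size. So the iteration is $x_n=\frac{n}{n+1}x_{n-1}+\frac{1}{n+1}x_0-\text{step}\cdot d_n$, matching \Cref{alg:SSEP_BSD}.

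\textbf{Lower bound.} For minimax optimality I will exhibit a hard instance in dimension $d\ge N+1$ showing that every span method incurs at least $\beta D/\sqrt{2(N+1)}$. I will adapt Drori--Taylor's Lipschitz hard instance by taking $f(x)=\frac{\beta}{\sqrt2}\max_{0\le i\le N} x^{(i)}$, possibly tilted by a linear term along $-\frac{\beta}{\sqrt 2(N+1)}\mathbf 1$, with $x_0$ placed so that $\|x_0-x_\star\|= D$. Its subgradients lie in $\frac{\beta}{\sqrt2}\conv{e_i}$, whose diameter is $\frac{\beta}{\sqrt2}\cdot\sqrt2=\beta$. By \cref{thm:interpolation-p0}, or directly from the max-of-affine argument used in its proof, $f$ is $(\beta,0)$-\Holder smooth, so it belongs to the problem class.

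To handle the minimizer, I will work with an unbounded-below-free tilted variant, for example $f(x)=\frac{\beta}{\sqrt2}\max_i\big(x^{(i)}-\frac{1}{N+1}\sum_k x^{(k)}\big)$ plus a mild term, or alternatively restrict the span to $N+1$ coordinates. With a resisting oracle that always returns $e_{i}$ for the first coordinate not yet explored, the iterate $x_N$ lies in the span of $N$ coordinates. Then $f(x_N)\ge f(x_0)$, while $f_\star = -\frac{\beta D}{\sqrt{2(N+1)}}$, attained at $x_\star=-\frac{D}{\sqrt{N+1}}\mathbf 1$.

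\textbf{Main obstacle.} I expect the main difficulty to be verifying that this tilted instance has an attained minimizer, has subgradient diameter exactly $\beta$, and yields the sharp constant. If the tilt breaks the diameter bound, I would instead use the untilted max with $x_0=0$ restricted to an appropriate set, and check it directly against the interpolation conditions of \cref{thm:interpolation-p0}.
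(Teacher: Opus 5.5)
Your upper-bound half matches the paper's argument: feasibility of \eqref{eq:feasible_SSEP_dual_sol} plus \cref{lem:equivalence}. Your explicit reduction of \eqref{eq:dual_generated_algo} to \Cref{alg:SSEP_BSD} (weights $\frac{n}{n+1}$ and $\frac{1}{n+1}$, step $\frac{\sqrt2 D\sqrt{N+1}}{\beta N}$) is correct and more detailed than the paper's.

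The gap is in the lower bound: you never arrive at a valid hard instance.
\begin{itemize}
\item The function $\frac{\beta}{\sqrt2}\max_i x^{(i)}$ is unbounded below along $-t\mathbf 1$. So it has no minimizer and is not in $\mathtt{P}_{L(\cdot),D}$. The value $-\frac{\beta D}{\sqrt{2(N+1)}}$ at $-\frac{D}{\sqrt{N+1}}\mathbf 1$ is not its minimum.
\item Your proposed tilt does not repair this. Indeed $\frac{\beta}{\sqrt2}\max_i x^{(i)}-\frac{\beta}{\sqrt2(N+1)}\langle \mathbf 1,x\rangle=\frac{\beta}{\sqrt2}\max_i\bigl(x^{(i)}-\bar x\bigr)\ge 0$, and it vanishes on the whole line $\R\mathbf 1$. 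That line contains the natural start $x_0=0$, so $f_\star=0$ rather than $-\frac{\beta D}{\sqrt{2(N+1)}}$, and the instance certifies nothing.
\item Your fallbacks fail as well. Taking $d=N+1$ does not change unboundedness. Restricting to a set via an indicator makes $f$ take the value $+\infty$, which violates \eqref{def: inexactly smooth}. It also adds unbounded normal-cone subgradients, which violate $\|g_x-g_y\|\le\beta$.
\end{itemize}

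The missing idea is to floor the max by a constant affine piece, as the paper does. Take $d=N+1$, $x_0=0$, and $f(x)=\frac{\beta}{\sqrt2}\max\{\max_i x^{(i)},-\frac{D}{\sqrt{N+1}}\}$.
\begin{itemize}
\item The minimum value $-\frac{\beta D}{\sqrt{2(N+1)}}$ is attained on $\{x: x^{(i)}\le-\frac{D}{\sqrt{N+1}}\ \forall i\}$. The point of this set nearest $x_0$ is $x_\star=-\frac{D}{\sqrt{N+1}}\mathbf 1$, at distance exactly $D$.
\item The subgradients lie in $\frac{\beta}{\sqrt2}\conv{0,e_1,\dots,e_{N+1}}$. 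Its diameter is still $\beta$, since $\|e_i\|=1<\sqrt2=\|e_i-e_j\|$. So the max-of-affine argument from the proof of \cref{thm:interpolation-p0} gives $(\beta,0)$-H\"older smoothness.
\end{itemize}

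Your oracle also needs adjusting. Returning $e_{n+1}$ (the first unexplored coordinate) at $x_n$ is not a valid subgradient once some explored coordinate is positive. Instead, return $\frac{\beta}{\sqrt2}e_{i_\star}$ with $i_\star$ the smallest maximizing index, and $0$ only at minimizers. If $x_n\in\spann{e_1,\dots,e_n}$, then $x_n^{(n+1)}=0$, so $f(x_n)\ge 0>f_\star$ and $i_\star\le n+1$. Induction then gives $x_N^{(N+1)}=0$, hence $f(x_N)\ge 0$ and $f(x_N)-f_\star\ge\frac{\beta D}{\sqrt{2(N+1)}}$.
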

\begin{proof}
The claimed guarantee is immediate from \cref{lem:equivalence}. The lower bound follows from a standard maximin hard instance design for nonsmooth optimization: Let $d = N+1$ and $e_i$ denote the $i$th standard basis vector. Consider $x_0=0$,
$$ f(x) = \frac{\beta}{\sqrt{2}}\max\left\{\max_{i=1, \dots, N+1} \inner{x}{e_i}, -\frac{D}{\sqrt{N+1}}\right\} $$
and the subgradient oracle choosing $$
g(x) = \begin{cases}
\frac{\beta}{\sqrt{2}}e_{i_\star} & f(x) > -\frac{\beta D}{\sqrt{2(N+1)}}\\
0 & f(x) = -\frac{\beta D}{\sqrt{2(N+1)}}
\end{cases}, \quad i_\star = \min\{i : f(x) = \beta\inner{x}{e_i}/\sqrt{2}\}.$$
It is easy to verify that $f$ is $(\beta,0)$-\Holder smooth, convex, and has $\|x_0-x_\star\|=D$ with minimizer at $x_\star = -\frac{D}{\sqrt{N+1}}\sum_{i=1}^{N+1} e_i$. Then the minimal objective value is $f(x_\star) = -\frac{\beta D}{\sqrt{2(N+1)}}$. A standard zero-chain argument (see~\cite[Theorem A.1]{drori2016optimal}) establishes that any algorithm in $\Algclass{span}$ has $f(x_N) \geq 0$. Hence, every subgradient span method has $ f(x_N) - f(x_\star) \geq \frac{\beta D}{\sqrt{2(N+1)}}$. By matching our upper bound, this proves optimality.
\end{proof}




\subsubsection{Comparison with Lipschitz Problem Class}
Given $(\beta,0)$-\Holder smoothness differs from Lipschitz continuity only up to small constants, here we briefly investigate the performance of algorithms across these classes. Namely, we consider three known optimal methods for $M$-Lipschitz convex minimization: (i) The subgradient method with constant stepsize $h = D/(M\sqrt{N+1})$ and final iterate averaging~\cite[Section 3.1]{bubeck2015convex}, (ii) The subgradient method with nonconstant stepsizes but optimal final iterate~\cite{zamani2025exact}, (iii) The SSEP induced method of~\cite{constructive_approach}. Each of these methods has an equal worst-case performance of $MD/\sqrt{N+1}$.

Surprisingly, when numerically solving our PEP~\eqref{eq:PEP_gram} for $(\beta,0)$-\Holder smooth convex problems, these three methods appear to possess identical convergence rates for our setting of interest as well. See \cref{fig:PEP_measures_both}. This holds whether we set $M=\beta$ or heuristically set $M=\beta/\sqrt{2}$. In either case, these methods are strictly suboptimal, performing worse than \Cref{alg:SSEP_BSD}. With $M=\beta/\sqrt{2}$, the optimal methods for the Lipschitz setting appear to have the optimal asymptotic coefficient for the $(\beta,0)$-\Holder smooth setting, making them only suboptimal in little-o terms. 

\pgfplotstableread{
N    bounded_subgrad
2    0.5278214039126697
4    0.3689511597399028
6    0.3014716301722239
8    0.2615740359995039
10   0.2343564502751046
12   0.2142350576675436
14   0.1985671858943041
16   0.1859140302821852
18   0.1754157410847461
20   0.1665208169248241
22   0.1588579687187615
24   0.1521662086403103
26   0.1462560603836161
28   0.1409861989530882
30   0.1362485762975360
32   0.1319588112750648
34   0.1280511330249295
36   0.1244710024986351
38   0.1211761870250556
40   0.1181295335356507
42   0.1153018200680053
44   0.1126687620075987
46   0.1102076748119571
48   0.1079012331257297
50   0.1057341335337608
}\subgradavgA

\pgfplotstableread{
N   minimax_rate
2   0.4082482904638631
4   0.3162277660168379
6   0.2672612419124244
8   0.2357022603955159
10  0.2132007163556104
12  0.1961161351381840
14  0.1825741858350554
16  0.1714985851425088
18  0.1622214211307625
20  0.1543033499620919
22  0.1474419561548971
24  0.1414213562373095
26  0.1360827634879543
28  0.1313064328597225
30  0.1270001270001905
32  0.1230914909793327
34  0.1195228609334394
36  0.1162476387438193
38  0.1132277034144596
40  0.1104315260748465
42  0.1078327732034384
44  0.1054092553389460
46  0.1031421246258793
48  0.1010152544552211
50  0.0990147542976674
}\minimaxdataA
\pgfplotstableread{
N     bounded_subgrad
2     0.4422673285654834
4     0.3241327326465001
6     0.2704423900086844
8     0.2373386988116757
10    0.2141695846657106
12    0.1967444561056841
14    0.1830087041595155
16    0.1718136132674596
18    0.1624581623099317
20    0.1544867640178836
22    0.1475874830863200
24    0.1415388716352179
26    0.1361792572602712
28    0.1313866460203770
30    0.1270680542392557
32    0.1231493639614786
34    0.1195722528827500
36    0.1162903118061764
38    0.1132652104267546
40    0.1104643087129890
42    0.1078618338610400
44    0.1054352350655755
46    0.1031650185088721
48    0.1010358270926593
50    0.0990333893726637
}\subgradavgB

\begin{figure}[htbp]
\newlength{\PEPgraphheight}
\setlength{\PEPgraphheight}{5cm}
    \centering

        \begin{tikzpicture}
        \begin{axis}[
            axis lines=left,
            width=0.8\textwidth,
            height=\PEPgraphheight,
            xmode = log, ymode = log,
            xlabel={$N$},
            ylabel={$f_N-f_\star$},
           every axis y label/.append style={yshift=10pt},
            xmin=2, xmax=50,
            ymin=0.1, ymax=0.65,
            grid=major,
            grid style={gray!30},
            legend style={
                at={(0.97,0.97)},
                anchor=north east,
                font=\scriptsize,
                draw=black!50,
                fill opacity = 0.9,
                text opacity = 1,
                cells={anchor=west}
            },
            tick label style={font=\small},
            label style={font=\normalsize},
            unbounded coords=jump,
        ]
 
        \addplot[color=cyan!70!white, solid, line width=2pt, mark=none]
            table [x=N, y=bounded_subgrad] {\subgradavgA};
        \addlegendentry{SGM $M = \beta$}
 
        \addplot[color=blue!90!black, solid, line width=2pt, mark=none]
            table [x=N, y=bounded_subgrad] {\subgradavgB};
        \addlegendentry{SGM $M = \beta/\sqrt{2}$}

        \addplot[color=black, dashed, line width=2pt, mark=none]
            table [x=N, y=minimax_rate] {\minimaxdataA};
        \addlegendentry{Minimax optimal rate}
 
        \end{axis}
        \end{tikzpicture}

         \caption{Suboptimality of the subgradient method~\cite[Section 3.1]{bubeck2015convex} on $(\beta,0)$-\Holder smooth functions. Other optimal subgradient methods are omitted as they had numerically identical PEP values.}

    \label{fig:PEP_measures_both}
\end{figure}

\subsection{Asymptotically Optimized Methods for \Holder Smooth Minimization}\label{subsec:asymptotic_OGM}
Next, we consider $(\beta,p)$-\Holder smooth functions with $p\in[0,1]$. The limiting cases now have known optimal methods (the above method when $p=0$ and OGM~\cite{kim2016optimized} when $p=1$). We recall by \cref{prop:Equiv Char of HS functions} that if a function is $(\beta,p)$-\Holder smooth then it is $L(\delta) = \kappa/\delta^q$-inexactly smooth where $q = \frac{1-p}{1+p}$ and $\kappa = (q/2)^q \beta^{\frac{2}{1+p}}$. Recall that this is tight up to an absolute constant at most $1.263$. Consequently, we study the class of $L(\delta) = \kappa/\delta^q$-inexactly smooth functions. 

Numerically applying the constructive approach~\eqref{upper_triangular_dual_pep}, we observed that regardless of the parameters $N, \kappa, q, D$, solutions exist with a structured sparsity pattern in $\lambda$. Specifically, only $\lambda_{n-1,n}$ and $\lambda_{\star, n}$ are nonzero. As a result, only the inequalities $\mathcal{Q}_{n-1,n}$ and $\mathcal{Q}_{\star,n}$ and respectively tolerances $\delta_{n-1,n}$ and $\delta_{\star,n}$ are needed to analyze the resulting methods. Once one fixes $\delta_{n-1, n}$ and $\delta_{\star,n}$ and the above sparsity pattern, the resulting $\lambda$ values and induced algorithm are uniquely specified. One arrives at the following method, structurally identical to the Optimized Gradient Method of~\cite{kim2016optimized}, stated in Algorithm~\ref{alg:IS-OGM}, noting our convention $1/\infty = 0$ when applicable.  


\begin{algorithm}[h]
\caption{Inexactly Smooth Optimized Gradient Method}\label{alg:IS-OGM}
\begin{algorithmic}
\Input $x_0$, iteration budget $N$, tolerance sequence $\{\delta_{n-1,n}\}, \{\delta_{\star,n}\}$
\Initialize  $\tau_0 = \frac{1}{L(\delta_{0,1})}+\frac{1}{L(\delta_{\star,0})}$, $z_1 = x_0-\tau_0 g_0$
\For{$n = 1, \ldots, N$}
    \State $\rho_n =  \begin{cases} 
   \sqrt{\frac{1}{L(\delta_{\star,n})^2}+\frac{4\tau_{n-1}}{L(\delta_{n-1,n})}} & \text{if } n = N  \\[1em]
 \frac{1}{L(\delta_{n,n+1})}+\sqrt{\left(\frac{1}{L(\delta_{n,n+1})}+\frac{1}{L(\delta_{\star,n})}\right)^2+\frac{4\tau_{n-1}}{L(\delta_{n-1,n})}+\frac{4\tau_{n-1}}{L(\delta_{n,n+1})}} & \text{else} 
    \end{cases}$
    \State $\tau_n = \tau_{n-1} + \frac{1}{2L(\delta_{\star,n})}+\frac{\rho_n}{2}$
    \State $x_n = \frac{\tau_{n-1}}{\tau_n}\left(x_{n-1}-\frac{1}{L(\delta_{n-1,n})}g_{n-1}\right) + \frac{\tau_n-\tau_{n-1}}{\tau_n}z_n$
    \State $z_{n+1} = z_n - (\tau_n-\tau_{n-1})g_n$
\EndFor
\end{algorithmic}
\end{algorithm}

All that remains is to pick a set of tolerances $\delta_{n-1,n}$ and $\delta_{\star, n}$, given $L(\delta) = \kappa/\delta^q$-inexact smoothness and $N,D>0$. Alas, from numerical solutions to~\eqref{upper_triangular_dual_pep}, an analytic formula remained elusive. However, as $N$ grew, numerical values approached limits 

\begin{equation}\label{eq:optimal_delta_OGM} 
    \delta_{n-1,n} = \left(\frac{q\kappa D^2}{(q+1)^2(N+1)}\right)^\frac{1}{q+1}{n^{-\frac{2}{q+1}}}, \qquad
    \delta_{\star, n} = 0.
\end{equation}
The following theorem proves a convergence guarantee for these choices. Our rates match the lower bounding theory cited in~\cite{nemierovski_optimal_HS} in terms of $N, \kappa, q, D$ up to constants. Moreover, the coefficient of our convergence rate improves upon prior work~\cite{Nesterov_2014}. 

\begin{theorem}\label{thm:UOGM-k/delta-q}
    For any $N\geq 1, D>0$ and $L(\delta)=\kappa/\delta^q$ and any convex $L(\cdot)$-inexactly smooth function $f$ with minimizer $x_\star$ satisfying $\|x_0-x_\star\|\leq D$, Algorithm~\ref{alg:IS-OGM} with tolerance sequence $\delta$ as in~\eqref{eq:optimal_delta_OGM} is guaranteed to have 
    $$f(x_N)-f(x_\star) \leq \frac{\frac{1}{2}D^2+\sigma_N}{\tau_N} \leq \left(\frac{(q+1)^\frac{q-1}{q+1}}{q^\frac{q}{q+1}}+o(1)\right)\frac{\kappa^\frac{1}{q+1}D^\frac{2}{q+1}}{(N+1)^\frac{2-q}{q+1}}.$$
    where $\sigma_N = \sum_{i=1}^N \tau_{i-1}\delta_{i-1,i}$.
    In particular, if $f$ is $(\beta, p)$-\Holder smooth, Algorithm~\ref{alg:IS-OGM} with suitable choices of $\delta$ has
    $$f(x_N)-f(x_\star) \leq \left(\frac{\left(p+1\right)^{p}}{2^{\left(\frac{p+1}{2}\right)}}+o(1)\right)\frac{\beta D^{1+p}}{(N+1)^{\frac{1+3p}{2}}}.$$ 
\end{theorem}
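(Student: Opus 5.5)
Both bounds follow from one potential-function (Lyapunov) argument. It mirrors the OGM analysis of~\cite{kim2016optimized} and uses only the sparse family of inequalities $\mathcal{Q}_{n-1,n}$ and $\mathcal{Q}_{\star,n}$.

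\textbf{Step 1: the certificate.} By \cref{lem:equivalence}, it suffices to exhibit a feasible point $(\lambda,t,s)$ of~\eqref{upper_triangular_dual_pep} whose induced method~\eqref{eq:dual_generated_algo} is Algorithm~\ref{alg:IS-OGM} and whose dual objective equals $(\tfrac12 D^2+\sigma_N)/\tau_N$. I would take the following choices.
\begin{itemize}
\item $\lambda_{\star,n}=(\tau_n-\tau_{n-1})/\tau_N$ with $\tau_{-1}:=0$, so that $\sum_n\lambda_{\star,n}=1$.
\item $\lambda_{n-1,n}=\tau_{n-1}/\tau_N$, so that the flow constraints $\lambda_{n,n+1}-\lambda_{n-1,n}=\lambda_{\star,n}$ telescope.
\item $t_{n-1,n}=\lambda_{n-1,n}/L(\delta_{n-1,n})$ and $t_{\star,n}=\lambda_{\star,n}/L(\delta_{\star,n})$, so that each term $\lambda\Lleft(\lambda/t)$ evaluates to $\lambda\,\delta$.
\item $s=1/\tau_N$.
\end{itemize}
With $\delta_{\star,n}=0$, the objective becomes $\tfrac12 D^2/\tau_N+\sum_n \tau_{n-1}\delta_{n-1,n}/\tau_N$, which is the first claimed bound. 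The remaining constraint is the quadratic one, $\lambda_{\star,j}^2/s\le t_{j-1,j}+t_{j,j+1}+t_{\star,j}$. Once everything is multiplied by $\tau_N$, this is exactly the quadratic equation defining $\rho_n$ through $\tau_n-\tau_{n-1}=\frac{1}{2L(\delta_{\star,n})}+\frac{\rho_n}{2}$, with the case $n=N$ lacking the $t_{N,N+1}$ term. I would check directly that this recursion makes the constraint tight. Then I would confirm, by unrolling~\eqref{eq:dual_generated_algo}, that the iteration coincides with the $x_n,z_n$ updates of Algorithm~\ref{alg:IS-OGM}. As a fallback, one can instead sum $\tau_{n-1}\mathcal{Q}_{n-1,n}+(\tau_n-\tau_{n-1})\mathcal{Q}_{\star,n}$ directly and complete the square in $z_{N+1}-x_\star$.

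\textbf{Step 2: asymptotics.} Set $\delta_{\star,n}=0$ (so $1/L=0$) and $\delta_{n-1,n}=c\,n^{-2/(q+1)}$ with $c=(q\kappa D^2/((q+1)^2(N+1)))^{1/(q+1)}$. Then $1/L(\delta_{n-1,n})=c^q n^{-2q/(q+1)}/\kappa$, and the recursion reduces to $\tau_n-\tau_{n-1}\approx \sqrt{\tau_{n-1}/L(\delta_{n-1,n})}$. I would show by induction, with explicit two-sided bounds, that $\tau_n=(A+o(1))\,n^{2/(q+1)}\cdot c^{q}/\kappa$ for the constant $A$ solving $A\cdot\frac{2}{q+1}=\sqrt{A}$, i.e.\ $\tau_n\sim \frac{(q+1)^2}{4}\frac{c^q}{\kappa}n^{2/(q+1)}$. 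The argument is the same as the classical $\theta_n\sim n/2$ estimate for OGM: compare with the ODE $\tau'=\sqrt{\tau\, n^{-2q/(q+1)}c^q/\kappa}$. Then $\sigma_N=\sum\tau_{i-1}\delta_{i-1,i}\sim \frac{(q+1)^2}{4}\frac{c^{q+1}}{\kappa}N$. Substituting $c$ and simplifying the exponents of $N$, $\kappa$ and $D$ gives the stated constant $\frac{(q+1)^{(q-1)/(q+1)}}{q^{q/(q+1)}}$. This particular $c$ is the minimizer over $c$ of $(\tfrac12D^2+\sigma_N)/\tau_N$, which serves as a consistency check.

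\textbf{Step 3: the H\"older corollary.} By \cref{prop:Equiv Char of HS functions}, a $(\beta,p)$-H\"older smooth $f$ is $\kappa/\delta^q$-inexactly smooth with $q=\frac{1-p}{1+p}$ and $\kappa=(q/2)^q\beta^{2/(1+p)}$. Substituting these into the bound from Step 2 and simplifying gives the stated rate $\frac{(p+1)^p}{2^{(p+1)/2}}\beta D^{1+p}/(N+1)^{(1+3p)/2}$. For $p=1$ there is no $\delta$, and the result is simply OGM.

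\textbf{Main obstacle.} The main obstacle is the asymptotic analysis in Step 2. The recursion is only implicit, and the last step $n=N$ has a different form. I would handle this by proving upper and lower bounds of the form $\tau_n\ge \frac{(q+1)^2}{4}\frac{c^q}{\kappa}n^{2/(q+1)}(1-O(1/n))$ by induction, and by showing that the final step changes $\tau_N$ only by a lower-order amount, so that it is absorbed into the $o(1)$.
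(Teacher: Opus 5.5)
\textbf{Step 2 has a genuine error: a dropped factor of $2$ in the recursion, which changes the leading constant.} Eliminating $\rho_n$ from Algorithm~\ref{alg:IS-OGM} with $1/L(\delta_{\star,n})=0$ gives, for $1\le n<N$,
\[
(\tau_n-\tau_{n-1})^2=\frac{\tau_n}{L(\delta_{n,n+1})}+\frac{\tau_{n-1}}{L(\delta_{n-1,n})}.
\]
This is just your Step~1 quadratic constraint holding with equality; the $t_{n,n+1}$ term is present for every $n<N$. Since $\tau_n/\tau_{n-1}\to 1$ and $L(\delta_{n,n+1})/L(\delta_{n-1,n})\to 1$, it gives $\tau_n-\tau_{n-1}\approx\sqrt{2\tau_{n-1}/L(\delta_{n-1,n})}$. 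Your $\sqrt{\tau_{n-1}/L(\delta_{n-1,n})}$ is the form of the last step $n=N$ only.

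The ODE comparison must therefore use $A\cdot\frac{2}{q+1}=\sqrt{2A}$, i.e.\ $A=\frac{(q+1)^2}{2}$. This yields $\tau_n\sim\frac{(q+1)^2}{2}\frac{c^q}{\kappa}n^{2/(q+1)}$ and $\sigma_N\sim\frac{q}{2}D^2$. Only with these does $(\frac12 D^2+\sigma_N)/\tau_N$ reproduce the stated coefficient. With your $A=\frac{(q+1)^2}{4}$ you get $\sigma_N\sim\frac q4 D^2$ and a coefficient $\frac{2+q}{1+q}$ times too large. Your two-sided induction also cannot close, because the claimed asymptotic is false: the true $\tau_n$ is twice as large, so the matching upper bound fails.

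Your own consistency check would have caught this. Minimizing $\frac{\kappa D^2}{2Ac^qN^{2/(q+1)}}+c\,N^{(q-1)/(q+1)}$ over $c$ gives $c^{q+1}=\frac{q\kappa D^2}{2AN}$, which is the prescribed $c$ only when $A=\frac{(q+1)^2}{2}$. The OGM analogy says the same: for constant $L$ the algorithm has $\tau_n=2\theta_n^2/L$, so $\theta_n\sim n/2$ means $\tau_n\sim n^2/(2L)$, not $n^2/(4L)$.

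The paper avoids the ODE by working with square roots. It shows
\[
\frac{1}{\sqrt{2L(\delta_{n,n+1})}}\le\sqrt{\tau_n}-\sqrt{\tau_{n-1}}\le\frac{1}{\sqrt{L(\delta_{n-1,n})}},
\]
deduces $\tau_n/\tau_{n-1}\to1$, upgrades the lower bound to an asymptotic equality, and telescopes.

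\textbf{A smaller bookkeeping issue is in Step 1.} Your multipliers are feasible, make every quadratic constraint tight, and give the right objective. However, with the normalization $\sum_n\lambda_{\star,n}=1$, $s=1/\tau_N$, the update~\eqref{eq:dual_generated_algo} as written takes $z$-steps $\lambda_{\star,n}=(\tau_n-\tau_{n-1})/\tau_N$. Algorithm~\ref{alg:IS-OGM} takes $z$-steps $\tau_n-\tau_{n-1}$, so the two methods do not coincide. That formula is not invariant under rescaling $(\lambda,t,s)$. It matches the $s=1$ scaling the paper uses for Algorithm~\ref{alg:SSEP_BSD}, where the bound is the objective divided by $\sum_n\lambda_{\star,n}$; equivalently, read the $z$-step as $\lambda_{\star,n}/s$. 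Multiplying your certificate by $\tau_N$ repairs the identification.

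Your fallback is exactly the paper's proof of the first inequality. It sums $\tau_{n-1}\mathcal{Q}_{n-1,n,\delta_{n-1,n}}+(\tau_n-\tau_{n-1})\mathcal{Q}_{\star,n,\delta_{\star,n}}$ via the potential $H_n$ and completes the square in $z_{N+1}-x_\star$, which sidesteps the normalization entirely. Step~3's substitution is correct.
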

The proof of \cref{thm:UOGM-k/delta-q}, deferred to \cref{sec: appendix algo design}, is a direct generalization of the existing inductive convergence analyses of OGM to include tolerances $\delta$.

\cref{fig:Holder-optimized-plots} presents two numerical results, fixing $\kappa = D = 1$ for ease. The first shows the PEP value~\eqref{upper_triangular_dual_pep} for the GFOM and our asymptotic fit of this method. Their worst-case performance is quite similar, even for small $N$. The second plot shows the effective coefficient of these guarantees for varied $q$, converging to our asymptotically optimized coefficient as $N$ grows. These motivate the following conjecture on the optimality of our leading coefficient.

\begin{conjecture}
     For any $\kappa, D > 0$, $q \in [0,1]$, $N, d \in \N$ with $d \geq N+2$, and any starting point $x_0 \in \R^d$, there exists a convex, $L(\delta) = \kappa/\delta^q$-inexactly smooth $f : \R^d \to \R$ with $\|x_0 -x_\star\| \leq D$ and subgradient oracle such that for any subgradient span method, $$f(x_N)-f(x_\star) \geq \left(\frac{(q+1)^\frac{q-1}{q+1}}{q^\frac{q}{q+1}}+o(1)\right)\frac{\kappa^\frac{1}{q+1}D^\frac{2}{q+1}}{(N+1)^\frac{2-q}{q+1}}.$$
\end{conjecture}

 
\pgfplotstableread{
N   obj
10       0.03607606522306979
15       0.02188720385815014
20       0.015181561994292351
25       0.01137143724597109
30       0.008954188126953484
35       0.007303156608937258
40       0.006114099233768454
45       0.005222859327582777
50       0.00453364249210271
55       0.003987128160370718
60       0.003544746859736119
65       0.003180431454422885
70       0.0028759859706231463
75       0.002618363249330898
80       0.0023979630911128897
85       0.0022075981756154005
90       0.0020417754282686367
95       0.0018962376923454243
100      0.001767635515735061
105      0.0016533113861716498
110      0.00155111287049945
115      0.001459294637123738
120      0.0013764242538472906
125      0.0013013118013381541
130      0.0012329638814592215
135      0.001170560930165336
140      0.001113378408308826
145      0.0010608285030511485
150      0.0010123965230012628
155      0.0009676391394550165
160      0.0009261710797844711
165      0.0008876584537710389
170      0.0008518137888056395
175      0.0008183825425070771
180      0.0007871416825691518
185      0.000757892023073369
190      0.0007304623335865971
195      0.0007046844877979818
200      0.0006804330640150266
}\objdataA
 
\pgfplotstableread{
N   obj
10       0.09722710926696515
15       0.06726688319304523
20       0.051449523034882705
25       0.04166504581576696
30       0.03501215673841935
35       0.03019377656236076
40       0.026542562730304347
45       0.023679996415787375
50       0.02137531142407189
55       0.0194798154817711
60       0.017893375067646704
65       0.016546055223357375
70       0.015387573381117122
75       0.014380797899727413
80       0.013497752493348244
85       0.012716947238136805
90       0.012021577001619498
95       0.01139835559119138
100      0.010836599898223307
105      0.01032764583864734
110      0.009864370716710105
115      0.009440888443554203
120      0.009052295831677843
125      0.00869443659585308
130      0.00836380376171372
135      0.008057409877442696
140      0.007772677516073676
145      0.0075073892787972584
150      0.007259615506191478
155      0.007027685010143156
160      0.006810119203532744
165      0.00660562287858223
170      0.0064130533792747975
175      0.0062313968173314535
180      0.006059751574725368
185      0.005897310960775403
190      0.005743385321203749
195      0.005597246377502984
200      0.005458393495682208
205      0.005326225359061793
210      0.0052003507533494725
215      0.005080253877628551
220      0.004965597056674481
225      0.004856002845101518
230      0.004751142400369393
235      0.0046507171013202495
240      0.004554449360719277
245      0.004462085102043978
250      0.004373422239945994
255      0.004288193549178311
260      0.004206193655386034
265      0.004127443941018605
270      0.0040513053114006325
275      0.003978092637058549
280      0.003907529249626042
285      0.0038393379027879837
290      0.0037737940992054898
295      0.0037099161655526998
300      0.0036484625070387893
305      0.0035887980638285933
310      0.0035311971486414635
315      0.0034754405837002875
320      0.003421587783779369
325      0.0033693090658185806
330      0.0033183110608451406
335      0.0032689490129312075
340      0.0032211321393521295
345      0.003174846372192555
350      0.003129735763125378
355      0.0030857944412882145
360      0.003043163987335758
365      0.003001573445193448
370      0.002961044947684601
375      0.002922079684668388
380      0.0028834675454799505
385      0.0028464431507065637
390      0.0028099291786067744
395      0.0027744744796840627
400      0.0027399422730522496
}\objdataB
 
\pgfplotstableread{
q   coeff
0.0000000100     0.9141923517475232
0.0200000098     0.9744228021225132
0.0400000096     1.0097940144172552
0.0600000094     1.035726393113661
0.0800000092     1.0556012847298692
0.1000000090     1.0710755344573422
0.1200000088     1.0831803790754946
0.1400000086     1.092610699933676
0.1600000084     1.0998748300078756
0.1800000082     1.1053594413868166
0.2000000080     1.1093685438335241
0.2200000078     1.1121420131008781
0.2400000076     1.1138800894214003
0.2600000074     1.1147417763524952
0.2800000072     1.1148651173406028
0.3000000070     1.1143600812703414
0.3200000068     1.1133241985454196
0.3400000066     1.111836237233954
0.3600000064     1.1099655913478952
0.3800000062     1.1077709100734057
0.4000000060     1.1053009937256848
0.4200000058     1.1026001320262233
0.4400000056     1.0997039137482345
0.4600000054     1.0966443311182783
0.4800000052     1.0934489782918742
0.5000000050     1.090140575475997
0.5200000048     1.0867396218837526
0.5400000046     1.083262744235772
0.5600000044     1.0797250624198607
0.5800000042     1.0761389764101412
0.6000000040     1.0725145671410536
0.6200000038     1.0688612075210013
0.6400000036     1.0651864330478482
0.6600000034     1.0614960464016492
0.6800000032     1.0577953327111984
0.7000000030     1.054088243804925
0.7200000028     1.0503784786553667
0.7400000026     1.046668676903808
0.7600000024     1.0429612465333296
0.7800000022     1.0392586729318025
0.8000000020     1.03556367958854
0.8200000018     1.03187951891776
0.8400000016     1.0282106542603795
0.8600000014     1.0245632973195926
0.8800000012     1.0209455312474272
0.9000000010     1.017366903910573
0.9200000008     1.0138234505628752
0.9400000006     1.0103144711720056
0.9600000004     1.0068439456008564
0.9800000002     1.003408736980384
1.0000000000     0.9999998535121649
}\nfifty
 
\pgfplotstableread{
q   coeff
0.0000000100     0.9630841717346035
0.0200000098     1.0228789033448116
0.0400000096     1.0565317435761992
0.0600000094     1.0803323599737837
0.0800000092     1.097826122916489
0.1000000090     1.1108573454347836
0.1200000088     1.1205118827393559
0.1400000086     1.127529268705006
0.1600000084     1.1324459948306025
0.1800000082     1.135639578892113
0.2000000080     1.1374497418383975
0.2200000078     1.1381224590714998
0.2400000076     1.1378549949116878
0.2600000074     1.1368054525056257
0.2800000072     1.1351100974429111
0.3000000070     1.1328893694102027
0.3200000068     1.1302302753104754
0.3400000066     1.127211964599365
0.3600000064     1.123899454309078
0.3800000062     1.1203471931088065
0.4000000060     1.1166041117260268
0.4200000058     1.112707657330969
0.4400000056     1.1086937723803505
0.4600000054     1.104589351470592
0.4800000052     1.1004180514471027
0.5000000050     1.0962019104349876
0.5200000048     1.0919577122898012
0.5400000046     1.0877010980694102
0.5600000044     1.083443782446481
0.5800000042     1.0791963000043274
0.6000000040     1.0749674711261694
0.6200000038     1.070763839429616
0.6400000036     1.066592509592538
0.6600000034     1.062457907128591
0.6800000032     1.0583642537650348
0.7000000030     1.0543138452461123
0.7200000028     1.0503100558391654
0.7400000026     1.0463546318560548
0.7600000024     1.0424489987085734
0.7800000022     1.0385944431722753
0.8000000020     1.0347923751327637
0.8200000018     1.0310440245716073
0.8400000016     1.027351079340901
0.8600000014     1.023715825249151
0.8800000012     1.0201411254769788
0.9000000010     1.016630075067264
0.9200000008     1.013181483646611
0.9400000006     1.0097947788333448
0.9600000004     1.0064706447786163
0.9800000002     1.00320701672134
1.0000000000     0.999999521281315
}\nonefifty
 
\pgfplotstableread{
q   coeff
0.0000000100     0.9854622924657518
0.0200000098     1.0452241460909875
0.0400000096     1.078261837317742
0.0600000094     1.100560185965657
0.0800000092     1.1166989301455055
0.1000000090     1.1285297346234622
0.1200000088     1.137071312653261
0.1400000086     1.1430779227450707
0.1600000084     1.1467893094651287
0.1800000082     1.1490391778831437
0.2000000080     1.1498424679937354
0.2200000078     1.149522373813483
0.2400000076     1.1483418685690883
0.2600000074     1.1464384963568426
0.2800000072     1.1439436229904818
0.3000000070     1.1409708310555782
0.3200000068     1.1375999905040852
0.3400000066     1.133899511552952
0.3600000064     1.1299621045938648
0.3800000062     1.1258205162409245
0.4000000060     1.1215193826564342
0.4200000058     1.1171098276323288
0.4400000056     1.112613271253866
0.4600000054     1.1080575201011007
0.4800000052     1.1034680272566706
0.5000000050     1.098860923037509
0.5200000048     1.0942594406902142
0.5400000046     1.0896687243019807
0.5600000044     1.0851040690092493
0.5800000042     1.0805755218467816
0.6000000040     1.0760924051335174
0.6200000038     1.0716569433605698
0.6400000036     1.067274609375309
0.6600000034     1.0629535675375692
0.6800000032     1.05869349047597
0.7000000030     1.0545007102244055
0.7200000028     1.0503722381015932
0.7400000026     1.046313253546301
0.7600000024     1.0423219708522968
0.7800000022     1.038406464276654
0.8000000020     1.0345555863299256
0.8200000018     1.030777051004371
0.8400000016     1.0270716202333459
0.8600000014     1.0234371665524984
0.8800000012     1.0198718948373355
0.9000000010     1.016382331874672
0.9200000008     1.0129717126434545
0.9400000006     1.0096221663155118
0.9600000004     1.0063446097570745
0.9800000002     1.0031392078273607
1.0000000000     1.0000016566589285
}\nfourfifty
 
\begin{figure}[htbp]
\centering
 
\begin{subfigure}[b]{0.48\textwidth}
\centering
\begin{tikzpicture}
\begin{axis}[
    axis lines=left,
    width=\textwidth,
    height=0.85\textwidth,
    ymode=log,
    xlabel={$N$},
    ylabel={$f_N-f_\star$},
    xmin=8, xmax=220,
    grid=major,
    grid style={gray!30},
    legend style={
        at={(0.97,0.97)},
        anchor=north east,
        font=\footnotesize,
        draw=black!50,
        cells={anchor=west},
        fill opacity = 0.9,
        text opacity = 1,
        row sep=1pt,
    },
    tick label style={font=\footnotesize},
    label style={font=\small},
]
 
\addplot[color=blue, solid, line width=1.5pt, mark=none]
    table [x=N, y=obj] {\objdataA};
\addlegendentry{Constructive Approach}
 
\addplot[color=black, dashed, line width=1.5pt, mark=none,
    domain=10:200, samples=200]
    {1.1541599247 / (x+1)^1.4};
\addlegendentry{\cref{thm:UOGM-k/delta-q}}
 
\addplot[color=red, dashed, line width=1.5pt, mark=none,
    domain=10:200, samples=200]
    {6.9644045064 / (x+1)^1.4};
\addlegendentry{Nesterov UFGM}
 
\end{axis}
\end{tikzpicture}
\end{subfigure}%
\hfill
\begin{subfigure}[b]{0.48\textwidth}
\centering
\begin{tikzpicture}
\begin{axis}[
    axis lines=left,
    width=\textwidth,
    height=0.85\textwidth,
    xlabel={$q$},
    ylabel={$(f_N-f_\star){(N+1)^\frac{2-q}{q+1}}$},
    xmin=0, xmax=1.05,
    ymin=0.9, ymax=1.27,
    grid=major,
    grid style={gray!30},
    legend style={
        at={(0.97,0.97)},
        anchor=north east,
        font=\footnotesize,
        draw=black!50,
        cells={anchor=west},
        fill opacity = 0.9,
        text opacity = 1,
        row sep=1pt,
    },
    tick label style={font=\footnotesize},
    label style={font=\small},
]
 
\addplot[color=cyan!60!white, solid, line width=1.5pt, mark=none]
    table [x=q, y=coeff] {\nfifty};
\addlegendentry{$N = 50$}
 
\addplot[color=blue!70!white, solid, line width=1.5pt, mark=none]
    table [x=q, y=coeff] {\nonefifty};
\addlegendentry{$N = 150$}
 
\addplot[color=blue!90!black, solid, line width=1.5pt, mark=none]
    table [x=q, y=coeff] {\nfourfifty};
\addlegendentry{$N = 450$}
 
\addplot[color=black, dashed, line width=1.5pt, mark=none,
    domain=0.001:1, samples=200]
    {(x+1)^((x-1)/(x+1)) / x^(x/(x+1))};
\addlegendentry{Asymptotic Bound}
 
\end{axis}
\end{tikzpicture}
\end{subfigure}
 \caption{The first plot compares the numerical output of the constructive approach~\eqref{constructive_approach_PEP} for $L(\delta) = 1/\delta^q$ with $q = 0.25$ with Nesterov~\cite[Theorem 3]{Nesterov_2014} and \cref{thm:UOGM-k/delta-q}. The second displays the conjectured asymptotic tightness of this leading coefficient.}\label{fig:Holder-optimized-plots}
\end{figure}

\subsection{An Optimized Universal Method for Inexactly Smooth Problems}\label{subsec:OBL}

As a final algorithm, we design an optimized universal and parameter-free method applicable for any $L(\cdot)$ satisfying our assumptions. Universal and parameter-free algorithms offer guarantees for a range of problem instances while requiring no input parameters dependent on their structure (i.e., the function $L(\cdot)$ or optimized tolerances $\delta$). For \Holder smooth convex minimization, such a method (UFGM) was previously designed by~\cite{Nesterov_2014}, generalizing Nesterov's fast gradient method to be universal. Here we similarly extend the OBL method of~\cite{OBL} to handle inexactly smooth functions.  

\begin{algorithm}[t]
\caption{Universal Optimized Backtrackable Linesearch method (UOBL)}\label{alg:UOBL}
\begin{algorithmic}
\Input $x_0$, $L_0 > 0$, iteration budget $N$, and target accuracy $\eps > 0$
\Initialize $\tau_0 = 1/L_0$, $z_1 = x_0-\tau_0g_0$
\For{$n = 1, \ldots, N$}
    \State Find the smallest $i \geq 0$ such that, with $L_n = 2^i L_{n-1}$, the values
    \State \quad $\tau_n = \begin{cases} \tau_{n-1}+\sqrt{\tau_{n-1}/{L_{n}}}  & \text{if } n = N \\[0.5em] \tau_{n-1}+\dfrac{1+\sqrt{1+8\tau_{n-1}L_{n}}}{2L_{n}} & \text{else} \end{cases}$
    \State \quad $x_n = \frac{\tau_{n-1}}{\tau_n}\left(x_{n-1}-\frac{1}{L_n}g_{n-1}\right)+\frac{\tau_n-\tau_{n-1}}{\tau_n}z_n$
      \State satisfy $f(x_{n-1})-f(x_n)-\inner{g_n}{x_{n-1}-x_n}-\frac{1}{2L_n}\|g_{n-1}-g_n\|^2 + \frac{\tau_n-\tau_{n-1}}{\tau_{n-1}}\cdot\frac{\eps}{2} \geq 0$
    \State $z_{n+1} = z_n - (\tau_n-\tau_{n-1})g_n$
  
\EndFor
\end{algorithmic}
\end{algorithm}

The UOBL method is defined in Algorithm~\ref{alg:UOBL}. The following theorem establishes its convergence guarantee, including an accumulated error term $\Delta_N$ identical to~\cite[Theorem 6]{OBL}, which grows only at each of the logarithmically many backtracking steps.

\begin{theorem}\label{thm:UOBL convergence}
    For any $N\geq 1, D,\eps>0$ and $L(\cdot)$ satisfying our assumptions, consider a convex $L(\cdot)$-inexactly smooth $f$ with minimizer $x_\star$ satisfying $\|x_0-x_\star\| \leq D$.  
    Then Algorithm~\ref{alg:UOBL} is guaranteed to have
  $$f(x_N)-f(x_\star)  \leq \frac{\max\left\{L_0, 2L\left(\frac{\eps}{\sqrt{2}N}\right)\right\}(D^2+\Delta_N)}{N^2}+\frac{\eps}{2}$$
    where $\Delta_N = \sum_{i=1}^N {\tau_{i-1}}\left(\frac{1}{L_{i-1}}-\frac{1}{L_i}\right)\|g_{i-1}\|^2$.
    In particular, if $f$ is $L(\delta) = \kappa/\delta^q$-inexactly smooth, then for $L_0$ sufficiently small, Algorithm~\ref{alg:UOBL} has a big-O optimal convergence rate of
    $$f(x_N)-f(x_\star) \leq \frac{2^{1+\frac{q}{2}}\kappa\left(D^2 + \Delta_N\right) }{N^{2-q}\eps^{q}}+\frac{\eps}{2}.  $$
\end{theorem}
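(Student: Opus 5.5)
The plan follows the potential-function (Lyapunov) analysis of OBL~\cite{OBL}, carrying an additive $\eps$-error. There are three steps: control the backtracking, establish a per-iteration potential inequality, and then combine and specialize.

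\emph{Step 1: the backtracking terminates with bounded $L_n$.} Set $\delta_n := \frac{\tau_n-\tau_{n-1}}{\tau_{n-1}}\cdot\frac{\eps}{2}$. By condition 1 of \cref{lemma:L(delta) conjugate equivalence}, the acceptance test holds whenever $L_n \ge L(\delta_n)$. From the $\tau$ recursion, $\tau_n-\tau_{n-1}\ge \sqrt{\tau_{n-1}/L_n}$, so $\delta_n \ge \frac{\eps}{2}(\tau_{n-1}L_n)^{-1/2}$. Inductively $\tau_{n-1}\lesssim n^2/L_n$ while the $L$'s are nondecreasing, which gives $\delta_n \gtrsim \eps/N$. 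Tracking constants carefully, I expect $\delta_n \ge \eps/(\sqrt2 N)$. Since $L(\cdot)$ is nonincreasing, the test is therefore passed once $L_n\ge L(\eps/(\sqrt2N))$. Because the doubling overshoots by at most a factor of $2$, we get $L_n\le \max\{L_0,2L(\eps/(\sqrt2 N))\}=:\bar L$ for all $n$.

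\emph{Step 2: per-iteration potential inequality.} I will mimic the OBL/OGM certificate with the sparsity pattern $\lambda_{n-1,n}$ and $\lambda_{\star,n}$ (as for \cref{alg:IS-OGM}). The inequalities used are:
\begin{itemize}
\item the accepted test inequality between $n-1$ and $n$, with multiplier $\tau_{n-1}$ and error $\delta_n$;
\item the plain convexity inequality $\mathcal{Q}_{\star,n}$ with $\delta=0$, i.e. $f_\star\ge f_n+\inner{g_n}{x_\star-x_n}$, with multiplier $\tau_n-\tau_{n-1}$.
\end{itemize}
Define the potential
\[
\Phi_n=\tau_n\bigl(f(x_n)-f_\star\bigr)+\tfrac12\|z_{n+1}-x_\star\|^2+(\text{the OBL gradient-norm correction term}).
\]
The claim to verify is $\Phi_n\le \Phi_{n-1}+\tau_{n-1}\delta_n+\tfrac12\tau_{n-1}\bigl(\tfrac1{L_{n-1}}-\tfrac1{L_n}\bigr)\|g_{n-1}\|^2$. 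The $\tau$ update (the quadratic $L_n(\tau_n-\tau_{n-1})^2=\tau_n+\dots$) is exactly what makes the cross terms cancel. The $\Delta_N$ term appears because $L_n$ changes between steps, exactly as in~\cite[Theorem 6]{OBL}. The error terms telescope: $\sum_n\tau_{n-1}\delta_n=\frac{\eps}{2}\sum_n(\tau_n-\tau_{n-1})\le \frac{\eps}{2}\tau_N$. Summing then gives $f(x_N)-f_\star\le (D^2+\Delta_N)/(2\tau_N)+\eps/2$. The last step uses the modified update at $n=N$, as in OGM.

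\emph{Step 3: lower bound on $\tau_N$ and specialization.} Since every $L_n\le\bar L$, the recursion gives $\sqrt{\tau_n}\gtrsim\sqrt{\tau_{n-1}}+c/\sqrt{\bar L}$, hence $\tau_N\ge N^2/(2\bar L)$. This yields the first bound. For $L(\delta)=\kappa/\delta^q$ and $L_0$ small, $\bar L=2\kappa(\sqrt2N/\eps)^q$, and substituting produces $2^{1+q/2}\kappa(D^2+\Delta_N)/(N^{2-q}\eps^q)+\eps/2$.

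\textbf{Main obstacle.} I expect the main obstacle to be Step 1's self-referential bound $\delta_n\ge\eps/(\sqrt2 N)$: it requires an upper bound on $\tau_{n-1}L_n$ that is uniform in the realized backtracking history. I would first try to prove by induction that $\tau_{n-1}L_{n}\le 2n^2$, i.e. $\sqrt{\tau_n L_n}\le \sqrt{\tau_{n-1}L_{n-1}}+\sqrt2$, using $L_{n-1}\le L_n$ so that rescaling only helps.
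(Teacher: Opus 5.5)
\textbf{There is a genuine gap, in Step 1.} Your Step 2 matches the paper: your $\Phi_n$ is, up to sign and constants, the paper's $H_n$, and it is maintained by the identity $H_n=H_{n-1}+\tau_{n-1}\tilde{\mathcal Q}_{n-1,n,\delta_{n-1,n}}+(\tau_n-\tau_{n-1})\mathcal C_{\star,n}$. Your final specialization arithmetic is also correct. The problem is that Step 1 is false as stated, and Step 3 rests on it.

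Your proposed induction runs in the wrong direction. Because $L_{n-1}\le L_n$ and $\tau$ is increasing, $\tau_{n-1}L_n\ge \tau_0L_n=L_n/L_0$. A $\tau$ accumulated under small trial constants makes the product \emph{larger} after a jump in $L$, not smaller. So neither $\tau_{n-1}L_n\le 2n^2$ nor $\delta_n\ge \eps/(\sqrt2N)$ holds in general, and in fact the uniform bound $L_n\le\max\{L_0,2L(\eps/(\sqrt2N))\}$ can fail.

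For a concrete instance, take $f(x)=\frac{\beta}{2}|x|$ (so $L(\delta)=\beta^2/(2\delta)$), $x_0=x_\star=0$, $g_0=\beta/2$, and $L_0$ tiny. Every trial $x_1$ is negative, so $g_1=-\beta/2$, the Bregman term in the test vanishes, and the test fails exactly when $L_1<\beta^2/(2\delta_1)$. With $a=L_1/L_0$ one has $\delta_1\approx \eps/\sqrt{2a}$. So the accepted $L_1$ is of order $\beta^4/(\eps^2L_0)$, far above $2L(\eps/(\sqrt2N))=\sqrt2N\beta^2/\eps$ once $L_0\ll \beta^2/(N\eps)$. Your remark that the test passes ``once $L_n\ge L(\eps/(\sqrt2N))$'' also glosses over the fact that $\delta_n$ itself shrinks as the trial $L_n$ grows.

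\textbf{The fix is to control the product $P:=L_N\tau_N$ rather than $L_n$.} The theorem still holds because $\tau_N$ grows along with $L_N$. Two facts hold regardless of the backtracking history.
\begin{enumerate}
\item[(i)] $P\ge N^2/2$. This follows by comparing with the fixed-$L_N$ recursion: all $L_n\le L_N$, $\tau_0=1/L_0\ge 1/L_N$, and the $\tau$-increments are decreasing in $L$ and increasing in $\tau_{n-1}$.
\item[(ii)] If any backtracking occurred, let $m$ be the last backtracking step. The rejected trial $L_m/2$ had a larger tolerance than $\delta_m$, so $L_N=L_m<2L(\delta_m)$. Combining $\delta_m\ge\frac{\eps}{2}(L_m\tau_{m-1})^{-1/2}$ with $L_m\tau_{m-1}\le P$ gives $L_N<2L(\eps/(2\sqrt P))$.
\end{enumerate}
Hence $\tau_N=P/L_N\ge P/(2L(\eps/(2\sqrt P)))$. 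Writing $\delta=\eps/(2\sqrt P)$, the right-hand side equals $\frac{\eps^2}{8\delta}\cdot\frac{1}{\delta L(\delta)}$. This is nondecreasing in $P$, because $\delta\mapsto\delta L(\delta)$ is nondecreasing by concavity of $1/L$ (as in \cref{smooth plus nonsmooth bound on L}). So $P\ge N^2/2$ yields $\tau_N\ge N^2/(4L(\eps/(\sqrt2N)))$, while without backtracking $\tau_N\ge N^2/(2L_0)$. Together these give exactly the claimed bound. This is what the paper's fixed-point quantities $L_\eps(\tau)$ and $\hat\tau_\eps(N)$ encode: the lower bound on $\tau_N$ survives even though $L_n$ itself is not uniformly bounded.
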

\begin{proof}
Consider inexact tolerances $\delta_{i-1,i}:=\frac{\tau_{i}-\tau_{i-1}}{\tau_{i-1}}\frac{\eps}{2}$. The proof of our convergence rate follows inductively, maintaining nonnegativity of the following quantities: for $n=0,\dots,N$, define
\begin{align*}H_n &= \tau_n(f_\star-f_n+\frac{1_{n < N}}{2L_n}\|g_n\|^2)+\frac{1}{2}\|x_0-x_\star\|^2-\frac{1}{2}\|z_{n+1}-x_\star\|^2 \\ &\qquad+\sum_{i=1}^n \tau_{i-1}\delta_{i-1,i}+\sum_{i=1}^{n} \frac{\tau_{i-1}}{2}\left(\frac{1}{L_{i-1}}-\frac{1}{L_{i}}\right)\|g_{i-1}\|^2, \end{align*}
where $1_{n < N} = 1$ when $n < N$ and $0$ at $n=N$. 

With initialization $\tau_0=\frac{1}{L_0}$ for some $L_0 > 0$ and $z_1 = x_0-\tau_0g_0$, the base case holds that $H_0 = \tau_0 \mathcal{C}_{\star,0} \geq 0$ (by convexity of $f$) with $\mathcal{C}_{\star, 0}$ defined below. For $n=1,\dots,N$, the induction is maintained by observing the identity
$$H_{n}=H_{n-1}+\tau_{n-1}\tilde{\mathcal{Q}}_{n-1,n,\delta_{n-1,n}}+(\tau_n-\tau_{n-1})\mathcal{C}_{\star,n}$$
with
\begin{align*}\tilde{\mathcal{Q}}_{n-1,n,\delta_{n-1,n}} &:= f_{n-1}-f_n - \inner{g_n}{x_{n-1}-x_n}-\frac{1}{2L_n}\|g_{n-1}-g_n\|^2 + \delta_{n-1,n},\\
\mathcal{C}_{\star, n} &:= f_\star - f_n - \inner{g_n}{x_\star - x_n}.\end{align*}
This identity shows that $H_n$ is the sum of three quantities that are nonnegative by definition, and hence $H_n$ is also nonnegative. Then, a convergence rate follows from rearranging $H_N\geq 0$ as
\begin{equation}\label{eq:OBL bound}f_N-f_\star \leq \frac{\frac{1}{2}D^2 + \sum_{i=1}^N \frac{\tau_{i-1}}{2}\left(\frac{1}{L_{i-1}}-\frac{1}{L_i}\right)\|g_{i-1}\|^2 }{\tau_N}+\frac{\eps}{2}\end{equation}
by our choices of $\delta_{i-1,i}$ and the nonnegativity of $\tau_0$.

To arrive at our claimed guarantee, we must bound $\tau_N$. For $L_0$ large enough, $L_n = L_0$ for all $n = 1, \dots, N$. Therefore we may bound $\tau_N \geq \frac{N^2}{2L_0}$ by considering the recurrence in~\cite[Corollary 5]{OBL} for fixed $L$. Otherwise, suppose we backtrack at least once. Letting $r_n = \frac{\tau_n-\tau_{n-1}}{\tau_{n-1}}$, note that
$$\tau_n-\tau_{n-1} = \begin{cases} \frac{1+\sqrt{1+8\tau_{n-1}L_n}}{2L_n} & n < N\\
\sqrt{\frac{\tau_{n-1}}{L_n}} & n = N\end{cases} \ \Longrightarrow \ r_n \geq \sqrt{\frac{1}{L_n\tau_{n-1}}}, \quad  \forall n \leq N.$$
Since the algorithm ensures $\tilde{\mathcal{Q}}_{n-1,n,\delta_{n-1,n}}  \geq 0$ for $\delta_{n-1,n}=\frac{(\tau_n-\tau_{n-1})}{\tau_{n-1}}\frac{\eps}{2}$, it follows that $$L_{n} \leq 2L\left(\frac{\eps}{2}\frac{(\tau_n-\tau_{n-1})}{\tau_{n-1}}\right) \leq 2L\left(\frac{\eps}{2\sqrt{L_n\tau_{n-1}}}\right)\leq 2L\left(\frac{\eps}{2\sqrt{L_N\tau_N}}\right)$$
where the first inequality holds by the doubling scheme on $L_n$, the second inequality considers the bound on $r_n$ and the monotonicity of $L(\cdot)$, and the last inequality follows from monotonicity of $L_n\tau_{n-1}$ and $\tau_N$.
Therefore, we conclude the uniform bound $$L_n \leq  2L\left(\frac{\eps}{2\sqrt{L_N\tau_N}}\right), \quad \forall n \leq N.$$

Define as the unique positive solution $L_\eps(\tau) := \left\{\hat{L} > 0 : \hat{L} = 2L\left(\frac{\eps}{2\sqrt{\hat{L}\tau}}\right)\right\},$ which exists by monotonicity of $L(\cdot)$ and convexity of $-1/L(\cdot)$. These same assumptions, along with $L_N \leq 2L\left(\frac{\eps}{2\sqrt{L_N\tau_N}}\right)$ enforce $L_n \leq L_\eps(\tau_N)$ for all $n \leq N$. Therefore $$\tau_N \geq \frac{N^2}{ 4L\left(\frac{\eps}{2\sqrt{L_N\tau_N}}\right)} \geq \frac{N^2}{2L_\eps(\tau_N)}$$ by noting the same recurrence as above that $\tau_N$ has with fixed $L$.
Finally, taking the unique positive solution to $\hat{\tau}_\eps(N) := \left\{\tau > 0 : \tau = \frac{N^2}{2L_\eps(\tau)}\right\},$ we bound $\tau_N \geq \hat{\tau}_\eps(N)$ from our assumptions on $L(\cdot)$. Using $\hat{\tau}_\eps(N)$ and $L_\eps(\hat{\tau}_\eps(N))$ to simplify \eqref{eq:OBL bound} yields the claimed result.

In the case where $f$ is $L(\delta) = \kappa/\delta^q$-inexactly smooth, we can express these functions exactly. Provided that $L_0 < 2^{1+q/2}\kappa\left(\frac{N}{\eps}\right)^{q}$, it holds that
\begin{align*}
    f_N-f_\star &\leq \frac{\frac{1}{2}D^2 + \sum_{i=1}^N \frac{\tau_{i-1}}{2}\left(\frac{1}{L_{i-1}}-\frac{1}{L_i}\right)\|g_{i-1}\|^2 }{\hat{\tau}_\eps(N)}+\frac{\eps}{2}\\
    &=\frac{2^{2+\frac{q}{2}}\kappa\left(\frac{1}{2}D^2 + \sum_{i=1}^N \frac{\tau_{i-1}}{2}\left(\frac{1}{L_{i-1}}-\frac{1}{L_i}\right)\|g_{i-1}\|^2\right) }{N^{2-q}\eps^{q}}+\frac{\eps}{2}
\end{align*}
where
$L_\eps(\tau) = \ \left(\frac{2^{q+1} \kappa \tau^{\frac{q}{2}}}{\eps^q}\right)^\frac{2}{2-q}$ and $\hat{\tau}_\eps(N) = \frac{N^{2-q}\eps^q}{2^{2+\frac{q}{2}}\kappa}$.
\end{proof}

    \paragraph{Acknowledgments.} Benjamin Grimmer was supported as an Alfred P. Sloan Foundation fellow.

    {\small
    \bibliographystyle{plainurl}
    \bibliography{bibliography}
    }
    \appendix

\section{Deferred Proofs on Inexactly Smooth Calculus}\label{sec: appendix L(delta) calc}

\begin{proof}[\bf Proof of \cref{lemma:L(delta) conjugate equivalence}]
($1. \Longrightarrow 2.$) Here we have that for all $x, y$ with $g_x \in \partial f(x), \ g_y \in \partial f(y)$ and all $\delta \geq 0$ $$f(x) \geq f(y) + \inner{g_y}{x-y}+\frac{1}{2L(\delta)}\|g_y-g_x\|^2 -\delta .$$
Writing $g_y = g_x + (g_y-g_x)$ and applying Young's inequality to bound the inner product $\inner{g_y-g_x}{y-x}$ by $\frac{L(\delta)}{2}\|y-x\|^2 + \frac{1}{2L(\delta)}\|g_y-g_x\|^2$, yields the second condition.

\noindent ($2. \Longrightarrow 3.$)  It holds that for any $x,z$ and $g_x \in \partial f(x)$ and any $\delta \geq 0$, \begin{align*}
    -f(z) &\geq f^*(g_x)-\inner{z}{g_x}-\frac{L(\delta)}{2}\|z-x\|^2-\delta,
\end{align*}
where we use the Fenchel-Young equality \eqref{eq:Fenchel-Young}.
In turn, for any $y$ with $g_y \in \partial f(y)$, \begin{align*}
    f^*(g_y) &\geq \inner{z}{g_y}-f(z)\\
    &\geq \inner{z}{g_y}+f^*(g_x)-\inner{z}{g_x}-\frac{L(\delta)}{2}\|z-x\|^2-\delta\\
    &= f^*(g_x)+\inner{x}{g_y-g_x}+\inner{z-x}{g_y-g_x}-\frac{L(\delta)}{2}\|z-x\|^2-\delta.
\end{align*}
Taking the supremum over $z$ gives $f^*(g_y) \geq f^*(g_x) + \inner{x}{g_y-g_x}+\frac{1}{2L(\delta)}\|g_y-g_x\|^2 -\delta.$

\noindent ($3. \Longrightarrow 1.$) It holds with the Fenchel-Young equality \eqref{eq:Fenchel-Young} that $$\inner{g_y}{y}-f(y) = f^*(g_y) \geq \inner{g_x}{x}-f(x)+\inner{x}{g_y-g_x}+\frac{1}{2L(\delta)}\|g_x-g_y\|^2-\delta .$$
Rearranging, swapping the roles of $x$ and $y$ above, and noting this holds for all $x, y$ with respective subgradients $g_x, g_y$ and all $\delta \geq 0$ yields the claimed inexact cocoercive-type condition. 
\end{proof}

\begin{proof}[\bf Proof of \cref{L(delta) scaling calculus}] 
We now prove each of the three claims separately.
    \begin{enumerate}
        \item Noting that $g_x \in \partial f(x) \Longleftrightarrow \alpha g_x \in \partial (\alpha f)(x)$, applying the change of variables $\delta \mapsto \delta/\alpha$ in~\eqref{def: inexactly smooth} and rearranging proves the claim.
        \item Note that for any $g_{Ax-b} \in \partial f(Ax-b)$, by the subgradient chain rule~\cite[Corollary 16.72]{bauschke}, one has $A^T g_{Ax-b} \in \partial (f \circ (A(\cdot) -b))(x)$. Therefore, with the change of variables $x \mapsto Ax-b$, \eqref{def: inexactly smooth} becomes
        \begin{eqnarray*}
            f(Ay-b)
            &\leq f(Ax-b) + \inner{A^Tg_{Ax-b}}{y-x}+\frac{\|A\|_{\mathrm{op}}^2L(\delta)}{2}\|y-x\|^2+\delta.
            \end{eqnarray*}
            
        \item For any fixed $x$, let $z_x \in \argmin_{z} f(x,z)$. Then $F(x) = f(x,z_x)$ and $F(y) = \inf_z f(y,z) \leq f(y,z_x)$. Since $f(x,z)$ is $L(\cdot)$-inexactly smooth it holds for any $(x,z)$, $(y,z')$ and $(g_x, g_z) \in \partial f(x,z)$ and any $\delta \geq 0$ that $$f(y,z') \leq f(x,z) + \inner{g_x}{y-x}+\inner{g_z}{z'-z}+\frac{L(\delta)}{2}(\|x-y\|^2+\|z-z'\|^2) + \delta.$$
        From the subdifferential calculus rule~\cite[Theorem 1]{infprojection}, we know that $\partial F(x) = \{g_x : (g_x, 0) \in \partial f(x,z_x), z_x \in \argmin_z f(x,z)\}$. For any $z_x \in \argmin_z f(x,z)$, and any associated $g_x$ such that $(g_x,0) \in \partial f(x,z_x)$, let  $z = z' = z_x$ and $g_z=0$. Then, the above inequality establishes the inexact smoothness of $F$ as
        $$F(y) \leq f(y,z_x) \leq F(x) + \inner{g_x}{y-x} + \frac{L(\delta)}{2}\|y-x\|^2+\delta.  $$
    \end{enumerate}
\end{proof}

\begin{proof}[\bf Proof of \cref{lemma:L(delta) sums and max calculus}]
    Recall from \cref{lemma:L(delta) conjugate equivalence} that $L_i$-inexact smoothness of $f_i$ ensures that all $x, y$ with $g_x^{(i)} \in \partial f_i(x)$ and $g_y^{(i)}\in \partial f_i(y)$ and $\delta_i \geq 0$ have
    \begin{equation}\label{primal-bound_indexed}f_i(y) \leq f_i(x) + \inner{g_x^{(i)}}{y-x}+\frac{L_i(\delta_i)}{2}\|y-x\|^2 + \delta_i,\end{equation}
    \begin{equation}\label{dual-bound_indexed}f_i^*(g_y^{(i)}) \geq f_i^*(g_x^{(i)}) + \inner{x}{g_y^{(i)}-g_x^{(i)}}+\frac{1}{2L_i(\delta_i)}\|g_x^{(i)}-g_y^{(i)}\|^2 - \delta_i .\end{equation}
    \begin{enumerate}
        \item For any choice of $\delta_i\geq 0$, summing each inequality in \eqref{primal-bound_indexed} with $\sum_{i=1}^m {\delta_i}=\delta$ and utilizing the sum rule~\cite[Theorem 23.8]{rockafeller}
        establishes a suitable quadratic upper bound for the function $\sum_{i=1}^m f_i$. Taking the infimum over all selections of $\delta_i\geq 0$ gives the claimed formula.
        \item Consider the function given by the inner maximum $f^*(g_x) = \max_{i} f_i^*(g_x)$. By~\cite[Theorem 18.5]{bauschke}, letting $\mathcal{I}^*(g_x) = \{i : f^*(g_x) = f_i^*(g_x)\}$ the subdifferential of this maximum is given by $$\partial f^*(g_x) = {\mathrm{conv}} \bigcup_{i \in \mathcal{I}^*(g_x)} \partial f_i^*(g_x) .$$
        Fix any $g_x$ with $x \in \partial f^*(g_x)$, which the above formula guarantees must take the form $x = \sum_{i \in \mathcal{I}^*(g_x)} \alpha_ix_i$ with $x_i \in \partial f_i^*(g_x)$. Then for any $g_y$, \begin{align*}
            f^*(g_y) &\geq \sum_{i \in \mathcal{I}^*(g_x)} \alpha_i f_i^*(g_y)\\
            &\geq \sum_{i \in \mathcal{I}^*(g_x)} \alpha_i(f_i^*(g_x) + \inner{x_i}{g_y-g_x}+\frac{1}{ 2L_i(\delta)}\|g_y-g_x\|^2 -\delta)\\
            &\geq f^*(g_x) + \inner{x}{g_y-g_x}+\frac{1}{2\max_i L_i(\delta)}\|g_y-g_x\|^2 -\delta .
        \end{align*}
        Hence $f^*$ is $1/\max_i L_i(\delta)$-inexactly strongly convex and so \cref{lemma:L(delta) conjugate equivalence} gives the result.
        \item Summing the bounds \eqref{dual-bound_indexed} with any selection of $\delta_i\geq 0$ verifies the $\sum 1/L_i(\delta_i)$-inexact strong convexity of $\sum f_i^*$. Taking the supremum over choices of $\delta_i\geq 0$ with $\sum \delta_i = \delta$ tightens this bound, from which the claimed inexact smoothness follows again by \cref{lemma:L(delta) conjugate equivalence}.

    \end{enumerate}
\end{proof}

\begin{proof}[\bf Proof of \cref{composing with abs to norm}]
    For any $x, y \in \R^d$, $g_x \in \partial f(x)$, and $\delta \geq 0$, it suffices to show that the following $S_f$ function is nonnegative. Namely, 
    $$0 \leq S_f(x,y,g_x,\delta) := f(x) + \inner{g_x}{y-x} + \frac{L(\delta)}{2}\|y-x\|^2 + \delta - f(y).$$
    Since $h$ is convex with full domain, we can apply a subgradient chain rule~\cite[Corollary 16.72]{bauschke} to $f=h\circ\|\cdot\|$. From this, any subgradient $g_x\in\partial f(x)$ can be decomposed into the form
    $$g_x = \zeta u, \quad \zeta \in \partial h(\|x\|), \quad u \in \partial \|\cdot\|(x).$$
    Consider minimizing $S_f$ over all values $y$ with some fixed norm, $\|y\| = s$. Note that if $x \ne 0$ then one has $u = x/\|x\|$ whereas if $x = 0$, one can have any $u \in B(0,1)$. In either case, since $S_f$ is a simple quadratic with respect to $y$ on this sphere, $S_f$ must minimize over $y$ somewhere collinear with $g_x$. From this, the result follows as $$S_f(x,y,g_x, \delta) \geq \min \{h(\|x\|)+\zeta\cdot(\pm \|y\|-\|x\|)+\frac{L(\delta)}{2}(\pm\|y\|-\|x\|)^2+\delta-h(\|y\|) \}$$ where the above is nonnegative from $h(|\cdot|)$ being $L(\cdot)$-inexactly smooth.
\end{proof}

\section{Deferred Proofs for Performance Estimation Theory}\label{sec: appendix PEP}

\begin{proof}[\bf Proof of \cref{lemma:inexactly_smooth_construction}]
We first show $k  = \inf_{\delta \geq 0} \left\{\frac{L\left(\delta\right)+\sqrt{L\left(\delta\right)^{2}+2\delta L\left(\delta\right)/R^2}}{2}\right\}$ is strictly positive. For $\delta \in [0,1]$, the term in the infimum is greater than $L(\delta) \geq L(1)$ by monotonicity.  For $\delta\in [1,\infty)$, we observe that the expression inside the infimum is also larger than
$\frac{\sqrt{\delta L(\delta)}}{R\sqrt{2}}$. To uniformly lower bound this on $[1,\infty)$, notice that the concavity of $1/L(\cdot)$ gives $\frac{1}{L(1)} \geq \frac{1}{\delta}\frac{1}{L(\delta)}+\left(1-\frac{1}{\delta}\right)\frac{1}{L(0)}.$ Together with $1/L(1) \geq 1/L(0) \geq 0$, where we utilize the convention $1/\infty = 0$, yields
$$
\frac{1}{L(\delta)}\leq\frac{1}{L(0)}+\left(\frac{1}{L(1)}-\frac{1}{L(0)}\right)\delta, \qquad \delta\geq 1.
$$
Therefore, $\delta L(\delta)\geq \delta/\left(\frac{1}{L(0)}+\left(\frac{1}{L(1)}-\frac{1}{L(0)}\right)\delta\right).$ The right-hand side is increasing in $\delta$, so $\delta L(\delta)\geq L(1)$ for all $\delta\geq 1$. Together,
$
k \geq  \min\left\{L(1), \, \frac{\sqrt{L(1)}}{R\sqrt{2}}\right\} > 0. 
$

    To verify $f$ is $L(\cdot)$-inexactly smooth, we check the conditions directly. Since the function $h(x)$ is extended linearly outside of the ball with radius $R$, as a consequence of \cref{composing with abs to norm}, it suffices to only verify for $\|x\|, \|y\| \leq R$.  We note that for given $\|x\| \leq R$, $h(x) = \frac{k}{2}\|x\|^2$ and $\nabla h(x) = kx$. Recall the cocoercive-like condition from \cref{lemma:L(delta) conjugate equivalence} for being $L(\cdot)$-inexactly smooth states that for all $x, y$ and $\delta\geq 0$, $h(y) \geq h(x) + \inner{\nabla h(x)}{y-x}+\frac{1}{2L(\delta)}\|\nabla h(x)-\nabla h(y)\|^2 - \delta$.
    In this case, this simplifies to all $\|x\|, \|y\| \leq R$ and $\delta\geq 0$ having
    \begin{align*}
       0 \geq \frac{k^2}{2L(\delta)}\|y-x\|^2 - \frac{k}{2}\|y-x\|^2 - \delta  .
    \end{align*}
  Our choice of $k =  \inf_{\delta \geq 0} \left\{\frac{L\left(\delta\right)+\sqrt{L\left(\delta\right)^{2}+2\delta L\left(\delta\right)/R^2}}{2}\right\}$ is exactly the maximum value of $k$ for which this holds.
  Therefore $h$ is $L(\cdot)$-inexactly smooth. To conclude, we note that so long as $\|Q\|_{\mathrm{op}} \leq 1$, then by \cref{L(delta) scaling calculus}, $h \circ Q^{1/2}$ is $L(\cdot)$-inexactly smooth as well.
\end{proof}

\begin{proof}[\bf Proof of \cref{lemma:slater point}]
     We construct a Slater point of a similar form to~\cite[Theorem 6]{taylor2017smooth}. In particular, under the Gram reformulation of \eqref{eq:Inexact-Primal-infinite-interpolation}, we show the existence of a strictly feasible point with $G \succ 0$ and $\mathcal{Q}_{i,j} > 0$. 
    
    For $L(\cdot)$ satisfying our assumptions, consider the constructed $L(\cdot)$-inexactly smooth function $h(x)$ in \cref{lemma:inexactly_smooth_construction} with  $R = D(1+\frac{L(1)+\sqrt{L(1)^2+2L(1)/D^2}}{2}\|W\|_\infty)^N.$ Define $$Q = \frac{0.5}{2+2\cos\left(\frac{\pi}{d+1}\right)}\begin{bmatrix}
        2 & 1 & 0 & \dots & 0 \\
        1 & 2 & 1 & \dots & 0 \\
        0 & 1 & 2 & \dots & 0\\
        \vdots & \ddots & \ddots & \ddots & \vdots \\
        0 & 0 & 0 & \dots & 2
    \end{bmatrix}, \quad f(x) = h(Q^{1/2}x).$$
    By the calculus rules in~\cref{L(delta) scaling calculus}, $f$ is $0.5 L(\cdot)$-inexactly smooth and therefore $f$ is $L(\cdot)$-inexactly smooth. Furthermore, $f$ has Hessian $kQ$ for all $\|x\|_{Q} \leq R$ with $k := \inf_{\delta \geq 0} \left\{\frac{L\left(\delta\right)+\sqrt{L\left(\delta\right)^{2}+2\delta L\left(\delta\right)/R^2}}{2}\right\}$ and minimizer $x_\star = 0$. Choosing $x_0 = D e_1$, we show by induction that $\|x_n\|_Q \leq R$ for all $n \leq N$. The base case holds since $\|Q\|_{\mathrm{op}} \leq 1$ and $D \leq R$. Then suppose $\|x_i\|_Q \leq D(1+\frac{L(1)+\sqrt{L(1)^2+2L(1)/D^2}}{2}\|W\|_\infty)^i$ for all $i \leq n-1$. Consequently, \begin{align*}\|x_n\|_Q \leq \|x_0\|_Q + \|\sum_{i=0}^{n-1} W_{n, i}g_i\|_Q
    &\leq D + k\|W\|_\infty D(1+k\|W\|_\infty)^{n-1}  \\
    &\leq D(1+\frac{L(1)+\sqrt{L(1)^2+2L(1)/D^2}}{2}\|W\|_\infty)^n\end{align*}
    where the first inequality uses the triangle inequality, the second substitutes $g_i = kQx_i$, applies the induction hypothesis and submultiplicativity of the matrix norm, and the last uses the definition of $k$, the bound $D \leq R$, and nonnegativity of the norm.
    
    Therefore, $g_n = kQx_n$ for all $n \leq N$. Furthermore, under the assumption $W_{i,i-1} \ne 0$ (see~\cite[Theorem 6]{taylor2017smooth}), it holds that $P = [x_0 \ | \ g_0 \ |\  g_1 \ |\  \dots\ | \ g_N]$ is upper triangular with non-zero diagonals and $G=P^TP \succ 0$. Consequently, each gradient is distinct and $f$ belongs to a stricter class than $L(\cdot)$-inexactly smooth functions, having $\mathcal{Q}_{i,j}(F,G)>0$. With all non-affine constraints strictly feasible, this is a Slater point. 
\end{proof}

\section{Deferred Proofs for Algorithm Design}\label{sec: appendix algo design}

\begin{proof}[\bf Proof of the derivation for \eqref{upper_triangular_dual_pep}]
Recall the definition of $\pAlg_{interp}$ given in~\eqref{constructive_approach_PEP}. We can reformulate this by applying the zero equality constraints and noting that $x_\star \in \mathrm{span}\{g_i\}$ implies $\|x_\star\|^2 = \sum_{i=0}^N \frac{\inner{g_i}{x_\star}^2}{\|g_i\|^2} \leq D^2$. Doing so, $\pAlg_{interp}$ equals
\begin{equation}\label{simplified_PEP_delta}
\begin{cases}
    \displaystyle\max_{f_i, \|g_i\|^2, \inner{g_i}{x_j}} & f_N - f_\star \\
    \mathrm{s.t.} & f_i - f_j - \langle g_j, x_i \rangle - \sup_{\delta_{i,j} \geq 0}\left\{\frac{\|g_i\|^2 + \|g_j\|^2}{2L(\delta_{i,j})} - \delta_{i,j}\right\} \geq 0, \quad j < i \\
    & f_i - f_j - \sup_{\delta_{i,j} \geq 0}\left\{\frac{\left(\|g_i\|^2 + \|g_j\|^2\right)}{2L(\delta_{i,j})} - \delta_{i,j}\right\} \geq 0, \quad i < j  \\
    & f_\star - f_i - \langle g_i, x_\star \rangle - \sup_{\delta_{\star, i} \geq 0}\left\{\frac{1}{2L(\delta_{\star,i})}\|g_i\|^2 - \delta_{\star,i}\right\} \geq 0, \\
    & f_i - f_\star - \sup_{\delta_{i,\star} \geq 0}\left\{\frac{1}{2L(\delta_{i,\star})}\|g_i\|^2 - \delta_{i, \star}\right\} \geq 0, \\
    & \sum_{i=0}^N \frac{\inner{g_i}{x_\star}^2}{\|g_i\|^2} \leq D^2
\end{cases}
\end{equation}
with $i, j$ ranging $0, \dots, N$.
The associated dual program is then

\begin{equation}
\dAlg_{interp} = \begin{cases}
   \displaystyle \inf_{\delta_{i,j} \geq 0} \min_{\lambda_{i,j}, \lambda_{\star,i}, \lambda_{i,\star}, \nu} & \Phi(\lambda, \delta, \nu) \\
    \mathrm{s.t.} & \sum_{i=j+1}^{N} \lambda_{j,i} - \sum_{i=0}^{j-1} \lambda_{i,j} = \lambda_{\star,j}-\lambda_{j,\star}, \quad j < N \\
    & \sum_{i=0}^{N-1} \lambda_{i,N} = \sum_{i=0}^{N-1} \lambda_{\star,i}-\lambda_{i,\star} \\
    & \sum_{i=0}^{N} \lambda_{\star,i}-\lambda_{i,\star} = 1 \\
    & \lambda_{i,j} \geq 0, \ \lambda_{\star,i} \geq 0, \ \lambda_{i,\star} \geq 0, \ \nu \geq 0 .
\end{cases}
\end{equation}
where we derive the constraints in the dual program by considering the linear components of the Lagrangian and consider the dual problem $\Phi(\lambda, \delta, \nu) = $\begin{align*}
   \max_{\|g_i\|^2, \inner{g_j}{x_i}}  \nu D^2 
    &+ \textstyle\sum_{0 \leq j < i \leq N} \lambda_{i,j} \left[ \delta_{i,j} - \langle g_j, x_i \rangle - \frac{1}{2L(\delta_{i,j})}\left(\|g_i\|^2 + \|g_j\|^2\right) \right] \\
    & + \textstyle\sum_{0 \leq i < j \leq N} \lambda_{i,j} \left[ \delta_{i,j} - \frac{1}{2L(\delta_{i,j})}\left(\|g_i\|^2 + \|g_j\|^2\right) \right] \\
    & + \textstyle\sum_{i=0}^{N} \lambda_{\star,i} \left[ \delta_{\star,i} - \langle g_i, x_\star \rangle - \frac{1}{2L(\delta_{\star,i})}\|g_i\|^2 \right] \\
    & + \textstyle\sum_{i=0}^N \lambda_{i,\star}\left[\delta_{i,\star}-\frac{1}{2L(\delta_{i,\star})}\|g_i\|^2\right]- \nu \sum_{i=0}^{N} \frac{\langle g_i, x_\star \rangle^2}{\|g_i\|^2}.
\end{align*}
Moreover, we note that the optimality conditions for the inner maximization imply that $\lambda_{i,j} = 0$ for $i > j$. Solving the inner maximization yields~\eqref{upper_triangular_dual_pep} after the change of variables  $\nu = s/2$ and $t_{i,j} = \lambda_{i,j}/L(\delta_{i,j})$.


\end{proof}

\begin{proof}[\bf Proof of \cref{thm:UOGM-k/delta-q}]
    Define the following quantity for $n=0,\dots,N$,
    \begin{align*}H_{n} &= \tau_{n}\left(f_\star-f_{n}+\frac{1_{n < N}}{2L(\delta_{n,n+1})}\|g_{n}\|^2 \right)+\frac{1}{2}\|x_0-x_\star\|^2-\frac{1}{2}\|z_{n+1}-x_\star\|^2\\& \qquad+\sum_{i=1}^{n}\tau_{i-1} \delta_{i-1,i}+\sum_{i=0}^{n}(\tau_i-\tau_{i-1}) \delta_{\star,i}.\end{align*}
    We claim that Algorithm~\ref{alg:IS-OGM} inductively maintains the nonnegativity of $H_n$.

    Recall $\mathcal{Q}_{i,j,\delta_{i,j}} := f_i-f_j - \inner{g_j}{x_i-x_j}-\frac{1}{2L(\delta_{i,j})}\|g_i-g_j\|^2+\delta_{i,j}$. With $\tau_{-1} := 0$ and for any $x_0$ and our choice of $\tau_0 = \frac{1}{L(\delta_{0,1})}+\frac{1}{L(\delta_{\star,0})}$, setting $z_1 := x_0-\tau_0g_0$, the base case holds that $H_0 = \tau_0 \mathcal{Q}_{\star,0,\delta_{\star,0}} \geq 0$. For any $n=1,\dots,N$, this nonnegativity is inductively maintained by observing the identity
    $$H_n =H_{n-1}+\tau_{n-1}\mathcal{Q}_{n-1,n,\delta_{n-1,n}}+(\tau_n-\tau_{n-1})\mathcal{Q}_{\star,n,\delta_{\star,n}} $$
    which shows $H_n$ is a sum of three nonnegative quantities (and hence is nonnegative). This identity follows from the carefully constructed choices of $\tau_n, x_n, z_n$\footnote{As a verification of an effective equivalent identity, see, for example,~\cite[Lemma 3]{grimmer2026minimaxoptimalitysubgameperfect}.}.

    Hence $H_N\geq 0$. Rearranging the definition of $H_N$, this establishes a convergence guarantee of
    \begin{equation}\label{general_UOGM_rate}
        f_N-f_\star \leq \frac{\frac{1}{2}\|x_0-x_\star\|^2 +\sigma_N}{\tau_N}
    \end{equation}
    where $\sigma_N := \sum_{i=1}^N \tau_{i-1}\delta_{i-1,i} +\sum_{i=0}^N (\tau_i-\tau_{i-1})\delta_{\star,i}$. The remainder of the proof follows from analyzing the sequence $\tau_n$, which is entirely determined by our choice of tolerances $\delta_{i-1,i}$ for $i \leq n$. Below, we carry out these final calculations. Given these, the guarantee outlined for the $(\beta,p)$-\Holder smooth setting is a direct result from the $L(\delta)=\kappa/\delta^q$-inexactly smooth guarantee, the implications of \cref{prop:Equiv Char of HS functions}, and the substitutions $p = \frac{1-q}{1+q}$ and $\beta = \left(\kappa\left(\frac{2}{q}\right)^{q}\right)^{\frac{1}{\left(1+q\right)}}$.
      
    For the chosen, optimized tolerances with
\begin{equation}\label{eq:c_0}
 \delta_{n-1,n} = \left(\frac{q\kappa D^2}{(q+1)^2(N+1)}\right)^\frac{1}{q+1}{n^{-\frac{2}{q+1}}}, \qquad
    \delta_{\star, n} = 0.
\end{equation}
the recurrence defining $\tau_n$ collapses to
\begin{equation}\label{eq:tau-recurrence-simplified}
(\tau_n-\tau_{n-1})^2=\frac{\tau_n}{L(\delta_{n,n+1})}+\frac{\tau_{n-1}}{L(\delta_{n-1,n})}\quad \forall n < N,\quad (\tau_N-\tau_{N-1})^2=\frac{\tau_{N-1}}{L(\delta_{N-1,N})}.
\end{equation}
Since $\delta_{n-1,n}$ is decreasing in $n$, $L(\delta_{n-1,n})$ is nondecreasing in $n$, so for $n<N$,
$$
\frac{\tau_n}{L(\delta_{n,n+1})}+\frac{\tau_{n-1}}{L(\delta_{n-1,n})} \geq \frac{\tau_n+\tau_{n-1}}{L(\delta_{n,n+1})}\geq \frac{\left(\sqrt{\tau_n}+\sqrt{\tau_{n-1}}\right)^2}{2L(\delta_{n,n+1})},
$$
using $a^2+b^2\ge\frac{(a+b)^2}{2}$. Taking the square root in \eqref{eq:tau-recurrence-simplified} and dividing by $\sqrt{\tau_n}+\sqrt{\tau_{n-1}}$ yields
\begin{equation}\label{eq:sqrt-tau-lower}
\sqrt{\tau_n}-\sqrt{\tau_{n-1}}\geq \frac{1}{\sqrt{2L(\delta_{n,n+1})}}.
\end{equation}

This bound is asymptotically sharp. Bounding
\eqref{eq:tau-recurrence-simplified} for $n < N$ from above instead gives
$$
\frac{\tau_n}{L(\delta_{n,n+1})}+\frac{\tau_{n-1}}{L(\delta_{n-1,n})}
\leq \frac{\tau_n+\tau_{n-1}}{L(\delta_{n-1,n})}
\leq\frac{\left(\sqrt{\tau_n}+\sqrt{\tau_{n-1}}\right)^2}{L(\delta_{n-1,n})},
$$
using $L(\delta_{n,n+1})\ge L(\delta_{n-1,n})$ and
$\tau_n+\tau_{n-1}\le(\sqrt{\tau_n}+\sqrt{\tau_{n-1}})^2$. Taking the square root
and dividing by $\sqrt{\tau_n}+\sqrt{\tau_{n-1}}$ gives the companion to
\eqref{eq:sqrt-tau-lower}: $\sqrt{\tau_n}-\sqrt{\tau_{n-1}}\leq\frac{1}{\sqrt{L(\delta_{n-1,n})}}$ for $n < N$.
Substituting \eqref{eq:c_0} into $L(\delta)=\kappa/\delta^q$ and dividing by $\sqrt{\tau_{n-1}} \geq \sum_{i=1}^{n-1} \frac{1}{\sqrt{2L(\delta_{i,i+1})}}$ gives
$$
0\leq\frac{\sqrt{\tau_n}-\sqrt{\tau_{n-1}}}{\sqrt{\tau_{n-1}}}
\leq\frac{\sqrt{2}n^{-\frac{q}{q+1}}}{\sum_{i=1}^{n-1}(i+1)^{-\frac{q}{q+1}}}
=\frac{\sqrt{2}}{q+1}n^{-1}\left(1+o(1)\right)
\to 0,
$$
where the equality uses
$\sum_{i=1}^{n}(i+1)^{-\frac{q}{q+1}}=(q+1)(n+1)^{\frac{1}{q+1}}(1+o(1))$.
Therefore, $\tau_n/\tau_{n-1}\to1$ uniformly in $N$. Since
$L(\delta_{n,n+1})/L(\delta_{n-1,n})=(1+1/n)^{\frac{2q}{q+1}}\to1$ as well,
\eqref{eq:tau-recurrence-simplified} reads
$(\tau_n-\tau_{n-1})^2=\frac{2\tau_n}{L(\delta_{n,n+1})}(1+o(1))$, and with
$\sqrt{\tau_n}+\sqrt{\tau_{n-1}}=2\sqrt{\tau_n}(1+o(1))$ this yields the asymptotic
equality $\sqrt{\tau_n}-\sqrt{\tau_{n-1}}=\frac{1}{\sqrt{2L(\delta_{n,n+1})}}(1+o(1))$
for $n<N$. Telescoping these terms gives 
\begin{equation}\label{eq:tau-asymp}
\tau_n=\frac{(q+1)^\frac{2}{q+1}\left(q D^2\right)^{\frac{q}{q+1}}}{2\kappa^\frac{1}{q+1}} \, \frac{(n+1)^{\frac{2}{q+1}}}{(N+1)^{\frac{q}{q+1}}}\left(1+o(1)\right), \quad \forall n \leq N,
\end{equation}
where the modified case of $n=N$ only adds lower order terms.
Considering our choices of $\delta_{n-1,n}$, it holds $\tau_{n-1}\delta_{n-1,n} = \frac{qD^2}{2(N+1)}(1+o(1))$ for each $n \leq N$ so
\begin{equation}\label{eq:sigma-asymp}
\sigma_N = \sum_{i=1}^N \tau_{i-1}\delta_{i-1,i} =\frac{ qD^2}{2}\left(1+o(1)\right).
\end{equation}
Substituting \eqref{eq:tau-asymp} and \eqref{eq:sigma-asymp} at $n=N$ into \eqref{general_UOGM_rate} gives the claimed result.
\end{proof}

\end{document}